\newtheorem{theorem}{Theorem}[section]
\newtheorem{corollary}[theorem]{Corollary}
\newtheorem{lemma}[theorem]{Lemma}
\newtheorem{proposition}[theorem]{Proposition}
\newtheorem{definition}[theorem]{Definition}
\newtheorem{remark}[theorem]{Remark}
\newcommand{\dd}{\mathrm{d}}
\numberwithin{equation}{section}
\begin{document}

\title[DG in time discretization for 2nd order hyperbolic PDEs] 
      {A high-order discontinuous Galerkin in time discretization for second-order hyperbolic equations }

\author{Aili Shao}
\address{Department of Mathematics, University of Oxford, Radcliffe Observatory Quarter, Woodstock Road, Oxford,UK OX2 6GG}
\email{aili.shao@maths.ox.ac.uk}

\keywords{Finite element methods, discontinuous Galerkin method,$hp$--finite element method, second-order hyperbolic equations, wave equations}
\begin{abstract}
The aim of this paper is to apply a high-order discontinuous-in-time scheme to second-order hyperbolic partial differential equations (PDEs). We first discretize the PDEs in time while keeping the spatial differential operators undiscretized. The well-posedness of this semi-discrete scheme is analyzed and \emph{a priori} error estimates are derived in the energy norm. We then combine this $hp$-version discontinuous Galerkin method for temporal discretization with an $H^1$-conforming finite element approximation for the spatial variables to construct a fully discrete scheme. \emph{A priori} error estimates are derived both in the energy norm and the $L^2$-norm. Numerical experiments are presented to verify the theoretical results.
\end{abstract}

\maketitle


\section{Introduction}
Discontinuous Galerkin (DG) methods \cite{RH, LR} have been widely and successfully used for the numerical approximation of partial differential equations (PDEs). They have been first introduced by Reed and Hill \cite{RH} to solve the hyperbolic neutron transport equations, and then generalized to elliptic and parabolic problems by Babu\v{s}ka $\&$ Zl\'{a}mal \cite{BZ}, Baker \cite{Ba}, Wheeler \cite{Wh}, Arnold \cite{Ar} and Rivi\`{e}re \cite{Riv} etc. Relevant analysis and applications of DG methods to first-order hyperbolic problems can be found in \cite{RH, LR, JP, CS2, FR, Ri, CS, SHS}. Several discontinuous Galerkin finite element methods (DGFEM) for solving wave-type equations have appeared in the literature \cite{Jo,AT, GSS,RS}.


For time-dependent problems we need to consider a suitable time integration scheme. Traditional approaches for numerical integration include explicit and implicit finite difference, Runge--Kutta  \cite{Ru, Ku} and Newmark--beta \cite{Ne} methods. Though implicit schemes are typically unconditionally stable, explicit schemes are usually preferred in engineering and physical applications because of their computational convenience. The main drawback of explicit methods is the limitation on the time step imposed by the Courant--Friedrichs--Lewy (CFL) \cite{CFL} condition. To alleviate this limitation, a time-stepping scheme based on the DGFEM was introduced. In contrast with traditional finite difference time integration schemes, for which the solution at the current time step depends on the previous steps, this discontinuous-in-time scheme on the time interval $(t_n, t_{n+1}]$ only depends on the solution at $t_{n}^{-}.$ Since the local polynomial degree is free to vary between time steps, this method is also naturally suited for an adaptive choice of the time discretization parameter. Thus, the DG method for temporal discretization has been a popular numerical integration scheme for time-dependent problems. For instance, Jamet \cite{Ja} and Eriksson \emph{et al.} \cite{EJT} discretized parabolic equations using this discontinuous-in-time scheme and derived error estimates of order $O(k^q+h^r)$ in the $L^2$ norm where $k$ and $h$ are the temporal and spatial discretization parameters, respectively, and $q$ and $r$ are the corresponding polynomial degrees. Following this approach for parabolic problems, Johnson \cite{Jo} and Saedpanah \cite{RS} applied the DG temporal discretization to the wave equation by converting the second-order hyperbolic equation into a first-order PDE system. Sch\"{o}tzau and Schwab \cite{SS} analyzed this discontinuous-in-time scheme for linear parabolic problems in the $hp$--version context; Antonietti \emph{et al.} \cite{AMSQ} applied this $hp$--DGFEM directly to systems of second-order ordinary differential equations (ODEs). 

In this paper, we generalize this high-order discontinuous-in-time method \cite{AMSQ} to second-order hyperbolic-type PDEs, which arise in a wide range of relevant applications, including acoustic, elastic, and electromagnetic wave propagation phenomena. We first discretize the PDEs in time while keeping the spatial differential operators undiscretized. The resulting weak formulation is based on weakly imposing the continuity of the approximate solution and its time derivative between time steps by penalizing jumps in these quantities in the definition of the numerical method. This discontinuous-in-time scheme used in the approximation is implicit and unconditionally stable. We then fully discretize the problem using an $H^1$-conforming finite element approximation for the spatial variables.

The paper is structured as follows. The next section is devoted to the construction of the semi-discrete DG scheme for second-order hyperbolic PDEs. We start with the statement of the problem and then discretize the problem in time.  The well-posedness of the resulting semi-discrete problem is analyzed. We also construct an appropriate energy norm arising from the variational formulation, which will be used to study the convergence of this numerical scheme. In Section \ref{semi_discrete}, we first analyze the stability of this method in the associated mesh-dependent energy norm. Then we give detailed proofs of our \emph{a priori} error estimates using the $L^2$-projection operator and the properties of Legendre polynomials. In Section \ref{fullydiscrete}, we construct a fully discrete scheme with an $H^1(\Omega)$-conforming finite element approximation in the spatial variables and carry out the convergence analysis of this fully discrete scheme. Numerical examples are provided in Section \ref{numerical} to verify the convergence results presented in Section \ref{fullydiscrete}.

\section{Semi-discrete Discontinuous Galerkin Approximation}\label{semi_discrete}
In this section, we introduce a high-order discontinuous Galerkin method of lines approach for second-order hyperbolic PDEs. We will show that the resulting variational formulation is well-posed and thus admits a unique solution in the linear space, which we will define in this section.
\subsection{A model problem}
Let $\Omega\subset \mathbb{R}^d$ be an open and bounded domain with Lipschitz boundary $\partial\Omega$. For $T>0$, we consider the following initial boundary value problem:
\begin{equation}\label{setup}
\ddot{u}(x,t)+\dot{u}(x,t)-\Delta u(x,t)=f(x,t) \mbox{ in } \Omega\times (0, T],
\end{equation}
where $f$ lies in $L^2((0,T];L^2(\Omega))$, and
\begin{equation}\label{ic}
u(x,0)=u_0(x) \in H_0^1(\Omega), \: \dot{u}(x,0)=u_1(x)\in L^2(\Omega),
\end{equation}
\begin{equation}\label{bc}
u(x,t)=0  \mbox{ on } \partial \Omega\times (0, T],
\end{equation}
are prescribed initial and boundary conditions. Here the dots over $u$ denote differentiation with respect to time $t$. We need to find a numerical approximation to the weak solution of the hyperbolic problem (\ref{setup})--(\ref{bc}). 

Before we formally define the \emph{weak solution} of the initial boundary value problem (\ref{setup})--(\ref{bc}), let us fix the notation here first. Since $\|\cdot\|_{H^1(\Omega)}$ and the seminorm $|\cdot|_{H^1(\Omega)}$ are equivalent in $H_0^1(\Omega)$ by the Poincar\'{e}--Friedrichs inequality, we define $\|v\|_{H^1(\Omega)}:=\|\nabla v\|_{L^2(\Omega)}$ for $v\in H_0^1(\Omega)$.
We denote by $\left(\cdot,\cdot\right)_{L^2}$ the inner product in $L^2(\Omega)$ and 
let $\left\langle \cdot, \cdot \right\rangle$ be the usual dual pairing between $H^{-1}(\Omega)$ and $H_0^1(\Omega)$.  Finally, we define the time-dependent bilinear form 
$$B[u,\varphi; t]:=\left(\dot{u}(t),\varphi\right)_{L^2}+\left(\nabla u(t),\nabla\varphi\right)_{L^2}$$  for $u\in L^2((0,T]; H_0^1(\Omega))$ with $\dot{u}\in L^2((0,T];L^2(\Omega))$ and $\varphi\in H_0^1(\Omega)$.
\begin{definition}\label{weaksol}
We say that a function 
$u\in L^2((0,T];H_0^1(\Omega)), \mbox{ with } \dot{u}\in L^2((0,T];L^2(\Omega))$, $\ddot{u}\in L^2((0,T];H^{-1}(\Omega)),$
is a weak solution of the hyperbolic initial boundary value problem (\ref{setup})--(\ref{bc}) if
\begin{enumerate}[(i)]
\item $\left\langle\ddot{u}(t),\varphi\right\rangle + B[u,\varphi; t]=\left(f(t),\varphi \right)_{L^2} \mbox{ for all } \varphi\in H_0^1(\Omega)$ and $0<  t \leq T$ and
\item $u(x,0)=u_0(x)$, $\dot{u}(x,0)=u_1(x).$
\end{enumerate}
\end{definition}
\begin{remark}\label{conitnuity}
It is well known that problem (\ref{setup})--(\ref{bc}) is well-posed and admits a unique weak solution (see Theorem 29.1 in \cite{Wl}). By Theorem $2$ in Section $5.9.2$ \cite{Ev}, we know that 
$$u\in C([0,T];L^2(\Omega)) \mbox{ and } \dot{u}\in C([0,T];H^{-1}(\Omega)).$$
This shows that the initial conditions are meaningfully attained in $L^2(\Omega)$ and $H^{-1}(\Omega)$ respectively. Furthmore, it is even true that (see Lions and Magenes \cite{LM}, Chapter III, Theorems 8.1 and 8.2)
$$u\in C([0,T];H_0^1(\Omega)) \mbox{ and } \dot{u}\in C([0,T];L^2(\Omega)).$$
Consequently, the initial conditions stated as (ii) in the above definition make sense.
\end{remark}
\subsection{Discontinuous-in-time  discretization}\label{semi-discrete}
We partition the interval $I=(0,T]$ into $N$ sub-intervals $I_n=(t_{n-1}, t_n]$ having length $k_n=t_n-t_{n-1}$ for $n=1,2,\ldots, N$ with $t_0=0$ and $t_N=T$, as shown below.
\begin{figure}[htp]
\includegraphics[scale=0.6]{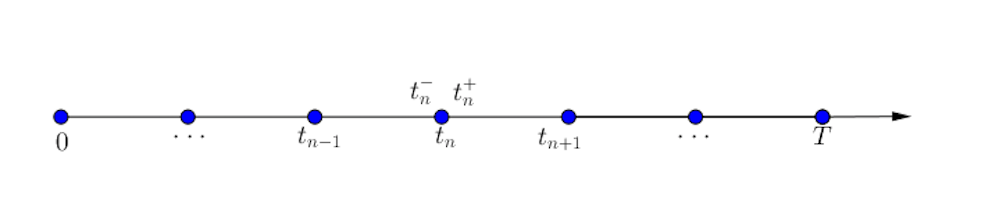}
\caption{Partition of the time domain}
\end{figure}
To deal with the discontinuity at each $t_n$ in the numerical approximation to $u$, we introduce the jump operator 
$$[v]_n := v(t_n^{+})-v(t_n^{-}) \mbox { for } n= 0, 1,\ldots,N-1,$$
where
$$v(t_n^{\pm})=\lim_{\varepsilon\to 0^{\pm}} v(t_n+\varepsilon) \mbox { for } n=0, 1,\ldots,N-1,$$
for a function $v$. By convention, we assume  that $v(0^{-})=u_0$ and $\dot{v}(0^{-})=u_1.$ Thus $$[v]_0=v(0^{+})-u_0, \mbox { and } [\dot{v}]_0=\dot{v}(0^{+})-u_1.$$ Moreover, we shall write $v_n^{+}$ for $v(t_n^{+})$ and $v_n^{-}$ for $v(t_n^{-})$.

Now we focus on the generic time interval $I_n$ and assume that the solution on $I_{n-1}$ is known. Testing the equation (\ref{setup}) against $\dot{v}$ where $v\in H^1(I_n; H_0^1(\Omega))$, we have 
\begin{equation}\label{case1}
\int_{t_{n-1}}^{t_n}\: \left\langle\ddot{u},\dot{v}\right\rangle\, \dd t+\int_{t_{n-1}}^{t_n}\: \left(\dot{u},\dot{v}\right)_{L^2}\, \dd t+ \int_{t_{n-1}}^{t_n}\: \left(\nabla u, \nabla \dot{v} \right)_{L^2}\, \dd t= \int_{t_{n-1}}^{t_n}\: (f,\dot{v})_{L^2}\, \dd t.
\end{equation}
Since $[u]_n=[\dot{u}]_n=0$ for $n=0,1,\ldots, N-1$, we can rewrite (\ref{case1}) by adding strongly consistent terms:
\begin{flalign}\label{case1extra}
&\int_{t_{n-1}}^{t_n}\:\left\langle\ddot{u},\dot{v}\right\rangle\, \dd t +\int_{t_{n-1}}^{t_n}\: \left(\dot{u},\dot{v}\right)_{L^2}\, \dd t+\int_{t_{n-1}}^{t_n} \:\left(\nabla u,\nabla \dot{ v} \right)_{L^2}\, \dd t+([\dot{u}]_{n-1}, \dot{v}_{n-1}^{+})_{L^2}+(\nabla [u]_{n-1},\nabla v_{n-1}^{+})_{L^2} \nonumber\\
&= \int_{t_{n-1}}^{t_n} (f,\dot{v})_{L^2} \:\dd t \mbox { for } n=1,\ldots,N. &&
\end{flalign}
Note that we define by convention $u_0^{-}:=u_0$ and $\dot{u}_{0}^{-}:=u_1.$
Summing over all time intervals in (\ref{case1extra}), we are able to define the bilinear forms $\mathcal{A}\colon \mathcal{H}\times \mathcal{H} \to\mathbb{R}$ with 
$$\mathcal{H}:= H^2((0,T];H^{-1}(\Omega))\cap H^1((0,T];H_0^1(\Omega))\cap L^2((0,T];H_0^1(\Omega))$$
by
\begin{equation}\label{case1blinear}
\begin{split}
\mathcal{A}(u,v):=&\sum_{n=1}^N\int_{t_{n-1}}^{t_n}\: \left\langle\ddot{u},\dot{v}\right\rangle \,\dd t +\sum_{n=1}^N\int_{t_{n-1}}^{t_n}\: \left(\dot{u},\dot{v}\right)_{L^2}\, \dd t+\sum_{n=1}^N\int_{t_{n-1}}^{t_n}\: \left(\nabla u, \nabla \dot{v}\right)_{L^2} \dd t\\
{}&+\sum_{n=1}^{N-1}([\dot{u}]_{n}, \dot{v}_{n}^{+})_{L^2}+\sum_{n=1}^{N-1}(\nabla [u]_{n}, \nabla v_{n}^{+})_{L^2} +(\dot{u}_0^{+}, \dot{v}_0^{+})_{L^2}+(\nabla u_0^{+},\nabla v_{0}^{+})
\end{split}
\end{equation}
for all $v\in \mathcal{H}.$
The linear functional $\mathcal{F}\colon \mathcal{H}\to\mathbb{R}$ is defined by
\begin{equation}\label{LF}
\mathcal{F}(v):=\sum_{n=1}^N\int_{t_{n-1}}^{t_n} (f,\dot{v})_{L^2}\:\dd t +(u_1,\dot{v}_0^{+})_{L^2}+(\nabla u_0,\nabla v_0^{+})_{L^2}.
\end{equation}
Next, we introduce the local semi-discrete space
$$\mathcal{V}^{q_n}:=\{v\in L^2((0,T]; H_0^1(\Omega))\colon v\mid_{I_n}\in\mathbb{P}^{q_n}(I_n; H_0^1(\Omega))\},$$
where $\mathbb{P}^{q_n}(I_n; H_0^1(\Omega))$ is the space of polynomials in $t$ of degree less than or equal to $q_n\geq 2$ on $I_n$ with coefficients in $H_0^1(\Omega)$. Then, introducing $\mathbf{q}:=(q_1,q_2,\ldots,q_N)\in\mathbb{N}^{N}$, the $N$-dimensional polynomial degree vector, we can define the discontinuous Galerkin finite element space as 
$$\mathcal{V}^{\mathbf{q}}:=\{v\in L^2((0,T]; H_0^1(\Omega)) \colon v\mid_{I_n}\in \mathcal{V}^{q_n} \mbox{ for all } n=1,2,\ldots, N \}.$$
The discontinuous-in-time formulation of problem (\ref{setup})--(\ref{bc}) reads as follows: find $u_{\mathrm{DG}}\in\mathcal{V}^{\mathbf{q}}$ such that
\begin{equation}\label{DG_formulation}
\mathcal{A}(u_{\mathrm{DG}},v)=\mathcal{F}(v),  \mbox{ for all } v\in\mathcal{V}^{\mathbf{q}}.
\end{equation}
\subsection{Stability analysis}
We begin by defining an energy norm, which will be used in the stability and convergence analysis.
\begin{proposition}\label{norm}
The function $|||\cdot||| \colon \mathcal{V}^{\mathbf{q}}\to\mathbb{R}^{+}$ defined by 
\begin{equation}\label{case1norm}
\begin{split}
|||v|||^2:=& \:\frac{1}{2}\|\dot{v}_0^{+}\|_{L^2}^2+\frac{1}{2}\sum_{n=1}^{N-1}\|[\dot{v}]_n\|_{L^2}^2+\frac{1}{2}\|\dot{v}_{N}^{-}\|_{L^2}^2+\sum_{n=1}^N \int_{t_{n-1}}^{t_n} \|\dot{v}\|_{L^2}^2\:\dd t\\
{}&+\frac{1}{2}\|\nabla v_0^{+}\|_{L^2}^2+\frac{1}{2}\sum_{n=1}^{N-1}\|\nabla [v]_n\|_{L^2}^2+\frac{1}{2}\|\nabla v_{N}^{-}\|_{L^2}^2
\end{split}
\end{equation}
is a norm on $\mathcal{V}^{\mathbf{q}}.$
\begin{proof}
The homogeneity and triangle inequality follow from the properties of the $L^2$ and $H_0^1$ norms. It is sufficient to show that 
$$|||v|||=0 \Leftrightarrow v\equiv 0.$$
If $v\equiv0$, it is trivially true that $|||v|||=0$. Now we need to show that $|||v|||=0\Rightarrow v\equiv 0.$
If $|||v|||=0$, then $\|v_0^{+}\|_{H_0^1}:=\|\nabla v_0^{+}\|_{L^2}^2=0$ and $\int_{t_{0}}^{t_1}\|\dot{v}\|_{L^2}^2 \, \dd t=0$. We have 
$$\begin{cases}
\dot{v}(t) =0  & \mbox{ for } t\in I_1,\\
v_0^{+}=0.\\
\end{cases}$$
This implies that $v(t)\equiv 0$ on $I_1$. We now proceed by induction. Assume that $v(t)\equiv 0$ on $I_{n-1}$ and consider the time interval $I_n$. From $\|[v]_{n-1}\|_{H_0^1}:=\|\nabla [v]_{n-1}\|_{L^2}=0$, we know that $v_{n-1}^{+}=v_{n-1}^{-}=0$. Then 
$$\begin{cases}
\dot{v}(t) =0  & \mbox{ for } t\in I_n,\\
v_{n-1}^{+}=0.\\
\end{cases}$$
Thus, we conclude that $v\equiv 0$ on each interval $I_n$ for $n=1,\ldots, N$. That is, $v\equiv 0$ on $(0,T]$.
\end{proof}
\end{proposition}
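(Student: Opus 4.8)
The plan is to separate the purely algebraic seminorm axioms, which are routine, from the single substantive point, which is positive definiteness. Observe first that $|||\cdot|||^2$ is a finite sum of squares of $L^2(\Omega)$-norms applied to linear images of $v$ (namely the trace maps $v\mapsto \dot v_0^+$, $v\mapsto[\dot v]_n$, $v\mapsto\dot v_N^-$, the integral map $v\mapsto\dot v|_{I_n}$, and the analogous gradient traces $v\mapsto\nabla v_0^+$, $v\mapsto\nabla[v]_n$, $v\mapsto\nabla v_N^-$). Since every $v\in\mathcal V^{\mathbf q}$ restricts on each $I_n$ to a polynomial in $t$ with $H_0^1(\Omega)$ coefficients, all these traces lie in $L^2(\Omega)$ and each summand is finite, so $|||\cdot|||$ is genuinely $\mathbb R^+$-valued. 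Consequently $|||\cdot|||^2$ is the quadratic form of a symmetric, positive-semidefinite bilinear form $\langle\langle u,v\rangle\rangle$ obtained by polarising each squared term; homogeneity and the triangle inequality then follow at once from the Cauchy--Schwarz/Minkowski inequality for this semi-inner product. Thus the entire content reduces to showing $|||v|||=0\Rightarrow v\equiv0$.

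For definiteness I would exploit that, all summands being nonnegative, $|||v|||=0$ forces each one to vanish separately, and I would extract two complementary pieces of information. The integral terms give $\int_{t_{n-1}}^{t_n}\|\dot v\|_{L^2}^2\,\dd t=0$ for every $n$, hence $\dot v\equiv0$ on each $I_n$; since $v|_{I_n}$ is a polynomial in $t$, this means $v$ is \emph{constant in time} on $I_n$. The gradient endpoint and jump terms, combined with the Poincar\'e--Friedrichs inequality (so that $\|\nabla\cdot\|_{L^2}$ is a bona fide norm on $H_0^1(\Omega)$), then \emph{pin down} those constants. Concretely, the base case on $I_1$ uses $\|\nabla v_0^+\|_{L^2}=0$ to get $v_0^+=0$ in $H_0^1(\Omega)$, which together with $v$ being constant on $I_1$ yields $v\equiv0$ on $I_1$.

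The induction step proceeds identically: assuming $v\equiv0$ on $I_{n-1}$ gives $v_{n-1}^-=0$, and the vanishing jump term $\|\nabla[v]_{n-1}\|_{L^2}=0$ forces $v_{n-1}^+=v_{n-1}^-=0$; since $v$ is constant on $I_n$ and equal to its left endpoint value $v_{n-1}^+$, we conclude $v\equiv0$ on $I_n$. Iterating over $n=1,\dots,N$ yields $v\equiv0$ on $(0,T]$. The step I would watch most carefully is precisely this coupling: the norm controls neither $v$ nor $\nabla v$ pointwise in the interior of an interval, so definiteness hinges on combining the derivative-integral terms (which flatten $v$ to a constant on each $I_n$) with the gradient endpoint/jump terms (which fix that constant to zero), propagated left-to-right through the mesh. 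Getting the ordering of this induction right, and invoking Poincar\'e--Friedrichs at each use of a $\nabla$-term, is the only place where care is needed; everything else is bookkeeping.
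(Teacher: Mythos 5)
Your proposal is correct and follows essentially the same route as the paper: nonnegativity of each summand, the time-integral terms forcing $\dot v\equiv 0$ on each $I_n$, and a left-to-right induction using $\|\nabla v_0^+\|_{L^2}=0$ and the vanishing gradient jumps to pin the resulting constants to zero. The extra remarks on polarisation and the explicit appeal to the Poincar\'e--Friedrichs inequality are harmless elaborations of what the paper leaves implicit.
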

\begin{remark}
If we take $u=v$ in (\ref{case1blinear}), we have 
\begin{equation}\label{Variational Form}
\mathcal{A}(v,v)=|||v|||^2\mbox{ for all } v\in \mathcal{V}^{\mathbf{q}}. 
\end{equation}
This shows that the bilinear form $\mathcal{A}(\cdot,\cdot)$ is coercive with respect to the $|||\cdot|||$ norm with coercivity constant $\alpha=1$.
\end{remark}
\begin{theorem}
Let $f\in L^2((0,T];L^2(\Omega))$, $u_0\in H_0^1(\Omega)$ and $u_1\in L^2(\Omega)$. Then the solution of (\ref{DG_formulation}), $u_{\mathrm{DG}}\in\mathcal{V}^r$ satisfies 
$$|||u_{\mathrm{DG}}|||\leq\left(\|f\|_{L^2((0,T];L^2(\Omega))}^2+2\| \nabla u_0\|_{L^2}^2+2\|u_1\|_{L^2}^2\right)^{\frac{1}{2}}.$$
\begin{proof}
Using the definition of $|||\cdot|||$ in (\ref{case1norm}) and Young's inequality, we have
\begin{flalign*}
|||u_{\mathrm{DG}}|||^2=&\:\mathcal{A}(u_{\mathrm{DG}},u_{\mathrm{DG}})\\
{}=& \:F(u_{\mathrm{DG}})&&\\
{}=&\: \sum_{n=1}^N\int_{t_{n-1}}^{t_n} (f,\dot{u}_{\mathrm{DG}})_{L^2}\: \dd t+(u_1,\dot{u}_{DG}(\cdot, 0^{+}))_{L^2}+(\nabla u_0, \nabla u_\mathrm{DG}(\cdot,0^{+}))_{L^2}&&\\
{}\leq&\: \frac{1}{2}\sum_{n=1}^N\int_{t_{n-1}}^{t_n} \|f(\cdot,t)\|_{L^2}^2 \:\dd t+\frac{1}{2}\sum_{n=1}^N\int_{t_{n-1}}^{t_n}\|\dot{u}_{\mathrm{DG}}(\cdot,t)\|_{L^2}^2 \:\dd t+\|u_1\|_{L^2}^2+\frac{1}{4}\|\dot{u
}_{\mathrm{DG}}(\cdot,0^{+})\|_{L^2}^2&&\\
&+\|\nabla u_0\|_{L^2}^2+\frac{1}{4}\|\nabla u_{\mathrm{DG}}(\cdot, 0^{+})\|_{L^2}^2&&\\
{}\leq &\: \frac{1}{2}\sum_{n=1}^N\int_{t_{n-1}}^{t_n} \|f(\cdot,t)\|_{L^2}^2 \:\dd t+\frac{1}{2}|||u_{\mathrm{DG}}|||^2+\|u_1\|_{L^2}^2+\|\nabla u_0\|_{L^2}^2.&&
\end{flalign*}
Then the required inequality follows.
\end{proof}
\end{theorem}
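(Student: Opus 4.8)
The plan is to combine the coercivity identity $\mathcal{A}(v,v)=|||v|||^2$ recorded in the Remark above (equation (\ref{Variational Form})) with the Galerkin equation (\ref{DG_formulation}), and then to control the right-hand side functional $\mathcal{F}$ by a suitable application of the Cauchy--Schwarz and Young inequalities, choosing the weights so that every term containing $u_{\mathrm{DG}}$ can be absorbed into a fraction of $|||u_{\mathrm{DG}}|||^2$.

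First I would test (\ref{DG_formulation}) with $v=u_{\mathrm{DG}}$ and use (\ref{Variational Form}) to obtain the starting identity
$$|||u_{\mathrm{DG}}|||^2=\mathcal{A}(u_{\mathrm{DG}},u_{\mathrm{DG}})=\mathcal{F}(u_{\mathrm{DG}}).$$
Expanding $\mathcal{F}(u_{\mathrm{DG}})$ via its definition (\ref{LF}) splits the task into bounding three contributions: the volume term $\sum_{n}\int_{I_n}(f,\dot u_{\mathrm{DG}})_{L^2}\,\dd t$, the initial-velocity term $(u_1,\dot u_{\mathrm{DG},0}^{+})_{L^2}$, and the initial-gradient term $(\nabla u_0,\nabla u_{\mathrm{DG},0}^{+})_{L^2}$.

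Then I would estimate each term so that the data appear with the stated constants while the $u_{\mathrm{DG}}$-factors reproduce components of the energy norm with coefficients no larger than $1/2$. For the volume term, Cauchy--Schwarz in time and in $L^2(\Omega)$ followed by Young's inequality with unit weight gives $\tfrac12\|f\|_{L^2((0,T];L^2(\Omega))}^2+\tfrac12\sum_n\int_{I_n}\|\dot u_{\mathrm{DG}}\|_{L^2}^2\,\dd t$, and the second summand is exactly half of the $\sum_n\int_{I_n}\|\dot v\|_{L^2}^2\,\dd t$ piece of $|||u_{\mathrm{DG}}|||^2$. For the two initial terms I would apply the weighted Young inequality $ab\le\delta a^2+\tfrac{1}{4\delta}b^2$ with $\delta=\tfrac14$, producing $\|u_1\|_{L^2}^2+\tfrac14\|\dot u_{\mathrm{DG},0}^{+}\|_{L^2}^2$ and $\|\nabla u_0\|_{L^2}^2+\tfrac14\|\nabla u_{\mathrm{DG},0}^{+}\|_{L^2}^2$; each of the $u_{\mathrm{DG}}$-terms here is precisely half of the corresponding $\tfrac12\|\dot v_0^{+}\|_{L^2}^2$ and $\tfrac12\|\nabla v_0^{+}\|_{L^2}^2$ contributions to the norm.

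Collecting these bounds, the three $u_{\mathrm{DG}}$-dependent remainders together do not exceed $\tfrac12|||u_{\mathrm{DG}}|||^2$, so they can be moved to the left-hand side. This leaves $\tfrac12|||u_{\mathrm{DG}}|||^2\le\tfrac12\|f\|_{L^2((0,T];L^2(\Omega))}^2+\|u_1\|_{L^2}^2+\|\nabla u_0\|_{L^2}^2$; multiplying by two and taking square roots yields the claim. The only genuine point of care is the bookkeeping of the Young weights: they must be tuned so that each $u_{\mathrm{DG}}$ remainder is dominated by a \emph{distinct} nonnegative piece of $|||\cdot|||^2$ with total coefficient at most $1/2$, which is exactly what makes the absorption step valid and fixes the constants $2$ in front of $\|\nabla u_0\|_{L^2}^2$ and $\|u_1\|_{L^2}^2$. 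Implicitly this also presupposes that (\ref{DG_formulation}) admits a solution $u_{\mathrm{DG}}$; the coercivity in (\ref{Variational Form}) together with finite-dimensionality on each time slab secures existence and uniqueness, so the estimate is not vacuous.
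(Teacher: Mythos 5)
Your proposal is correct and follows essentially the same route as the paper's own proof: testing with $v=u_{\mathrm{DG}}$, invoking the coercivity identity $\mathcal{A}(v,v)=|||v|||^2$, splitting $\mathcal{F}(u_{\mathrm{DG}})$ into its three terms, applying Young's inequality with exactly the same weights ($\tfrac12$--$\tfrac12$ for the source term, $\delta=\tfrac14$ for the two initial terms), and absorbing the resulting $\tfrac12|||u_{\mathrm{DG}}|||^2$ into the left-hand side. The only difference is that you make the weight bookkeeping and the existence caveat explicit, which the paper leaves implicit.
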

\subsection{Convergence analysis}
In this section, we prove \emph{a priori} error estimates with respect to the energy norm $|||\cdot|||$ defined in (\ref{case1norm}).

Before we proceed to the proof of the main convergence result, let us first define the $L^2$-projection operator based on Definition 3.1 in \cite{SS} and Definition 3.2 in \cite{AMSQ}.
\begin{definition}\label{l2projdef}
Let $I=(-1,1)$. For a function $w\in C((-1,1];H_0^1(\Omega))\cap L^2(I; H_0^1(\Omega))$, we define the (boundary value preserving) $L^2$-projection $\mathcal{P}^q w\in\mathbb{P}^q(I;H_0^1(\Omega))$ with $q\geq 0$ by the following conditions 
\begin{equation}\label{zeroatend}
\mathcal{P}^q w(x,1)=w(x,1)\in H_0^1(\Omega),
\end{equation}
\begin{equation}\label{orthogonality}
\int_{I} \left((w-\mathcal{P}^q w), \varphi\right)_{L^2} \:\dd t=0 \mbox{ for all } \varphi\in\mathbb{P}^{q-1}(I;H_0^1(\Omega)).
\end{equation}
When $q=0$, only the first condition is necessary.
\end{definition}

\begin{definition}\label{proj}
Let $I=(-1,1)$. For a function $u\in H^1(I; H_0^1(\Omega))$ such that $\dot{u}\in C((-1,1]; H_0^1(\Omega))$, we define $\Pi^q u\in\mathbb{P}^q(I; H_0^1(\Omega))$ with $q\geq 1$ by 
$$\Pi^q u(x,t)=u(x,-1)+\int_{-1}^t \mathcal{P}^{q-1} \dot{ u}(x,s)\,\dd s\mbox{ for all } t\in (-1,1],$$
where $\mathcal{P}^{q-1}\dot{u}$ is given in Definition \ref{l2projdef}.
\end{definition}
Now we need to show that $\Pi^q u(x,t)$ in Definition \ref{proj} is well-defined. Consider the sequence of Legendre polynomials $\{L_i\}_{i\geq 0}$, $L_i\in\mathbb{P}^i(I)$ on $I=(-1,1)$ defined by the following recurrence relation
$$(i+1)L_{i+1}(t)=(2i+1)tL_i(t)-iL_{i-1}(t)$$
with $L_0(t)=1$ and $L_1(t)=t$.

The relevant properties of Legendre polynomials for our purpose are the following:
\begin{enumerate}
\item $L_i(1)=1 \mbox{ for } i\in \mathbb{N}\cup \{0\};$
\item $\int_{-1}^t L_i(s)\,\dd s=\frac{1}{2i+1} (L_{i+1}(t)-L_{i-1}(t))\mbox{ for } i\in \mathbb{N};$
\item $\int_{-1}^1 L_i(t)^2\,\dd t=\frac{2}{2i+1}\mbox{ for } i\in\mathbb{N}\cup \{0\}.$
\end{enumerate}
\begin{lemma}\label{projlemma}
$\Pi^q u(x,t)$ defined in Definition \ref{proj} is well-defined.
\begin{proof}
\emph{Existence}: Note that $\dot{u}\in L^2(I; H_0^1(\Omega))$, so we can write $\dot{ u}$ as a Legendre series in the following form
$$\dot{ u}(x,t)=\sum_{i=0}^{\infty} b_i(x) L_i(t)$$
with $b_i\in H_0^1(\Omega)$ for each $i\in\mathbb{N}\cup \{0\}.$ From Lemma 3.5 in \cite{SS}, we find that 
$$\mathcal{P}^{q-1} \partial_t u(x,t)=\sum_{i=0}^{q-2} b_i(x)L_i(t) +\left(\sum_{i=q-1}^{\infty} b_i(x)\right) L_{q-1}(t),$$
and consequently,
\begin{flalign*}
\Pi^q u(x,t)=&\:u(x,-1)+\sum_{i=0}^{q-2} b_i(x) \int_{-1}^t L_i(s) \,\dd s+ \left(\sum_{i=q-1}^{\infty} b_i(x)\right)\int_{-1}^t L_{q-1}(s)\,\dd s &&\\
{}=&\: u(x,-1)+b_0(x)(t+1)+\sum_{i=1}^{q-2} b_i(x) \frac{1}{2i+1}(L_{i+1}(t)-L_{i-1}(t)) &&\\
{}& +\left(\sum_{i=q-1}^{\infty} b_i(x) \right) \left(\frac{1}{2q-1}(L_q(t)-L_{q-2}(t))\right) &&\\
{}=& \:\left[ u(x,-1)+b_0(x)-\frac{b_1(x)}{3}\right]L_0(t)+\left[b_0(x)-\frac{b_2(x)}{5}\right]L_1(t)+\sum_{i=2}^{q-3}\left[\frac{b_{i-1}(x)}{2i-1}-\frac{b_{i+1}(x)}{2i+3}\right] L_i(t)&&\\
&+\left[\frac{b_{q-3}(x)}{2q-5}-\sum_{i=q-1}^{\infty}\frac{b_i(x)}{2q-1} \right]L_{q-2}(t)+\frac{b_{q-2}(x)}{2q-3}L_{q-1}(t)+\sum_{i=q-1}^{\infty}\frac{b_i(x)}{2q-1}L_q(t), &&
\end{flalign*}
or equivalently
\begin{equation}\label{expansion}
\Pi^q u(x,t)=\sum_{i=0}^q u_i^{\ast}(x)L_i(t),
\end{equation}
\begin{align*}
u_0^{\ast}(x)=&\:u(x,-1)+b_0(x)-\frac{b_1(x)}{3},\\
u_1^{\ast}(x)=&\:b_0(x)-\frac{b_2(x)}{5},\\
u_i^{\ast}(x)=&\:\frac{b_{i-1}(x)}{2i-1}-\frac{b_{i+1}(x)}{2i+3} \mbox{ for }i=2,\ldots, q-3,\\
u_{q-2}^{\ast}=&\:\frac{b_{q-3}(x)}{2q-5}-\sum_{i=q-1}^{\infty}\frac{b_i(x)}{2q-1},\\
u_{q-1}^{\ast}=&\:\frac{b_{q-2}(x)}{2q-3},\\
u_{q}^{\ast}=&\sum_{i=q-1}^{\infty}\frac{b_i(x)}{2q-1}.
\end{align*}
\emph{Uniqueness}:  Assume for contradiction that both $u_1, u_2\in \mathbb{P}^q(I,H_0^1(\Omega))$ satisfy the conditions in the definition. In particular, we have $u_1(x,1)=u_2(x,1)$. Now considering the difference $u_1-u_2$, we can write it as a Legendre series $u_1-u_2=\sum_{i=0}^q a_i(x) L_i(t)$ with $a_i=\int_{I} (u_1-u_2) L_i \,\dd t\in H_0^1(\Omega)$. It follows from the orthogonality condition that $$\int_{I} \left (u_1-u_2, a L_k\right)_{L^2} \,\dd t=0 \mbox{ for all } a\in H_0^1(\Omega),\quad 0\leq k \leq q-1.$$
Using the orthogonality properties of the Legendre polynomials we get $\left( a_k, a\right)_{L^2}=0$ for all $a\in H_0^1(\Omega)$. Since $H_0^1(\Omega)$ is dense in $L^2(\Omega)$, we have that $a_k=0$ in $L^2(\Omega)$ and thus $a_k=0$ in $H_0^1(\Omega)$ for $0\leq k\leq q-1.$ This implies that $u_1-u_2=a_q(x) L_q(t)$. Since $u_1(x,1)=u_2(x,1)$, we have that  $a_q\equiv 0,$ which proves the uniqueness of a projection polynomial satisfying the conditions in Definition \ref{proj}.
\end{proof}
\end{lemma}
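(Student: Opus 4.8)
The plan is to split the argument into existence and uniqueness, matching the two requirements hidden in the word \emph{well-defined}: that the formula in Definition \ref{proj} genuinely produces an element of $\mathbb{P}^q(I; H_0^1(\Omega))$, and that the underlying projection $\mathcal{P}^{q-1}\dot{u}$ (hence $\Pi^q u$) is uniquely determined by the conditions of Definition \ref{l2projdef}.

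For existence, I would first expand $\dot{u}$ in its Legendre series $\dot{u}(x,t)=\sum_{i=0}^{\infty} b_i(x) L_i(t)$, noting that each coefficient $b_i(x)=\frac{2i+1}{2}\int_{-1}^{1}\dot{u}(x,t)L_i(t)\,\dd t$ lies in $H_0^1(\Omega)$. The key structural input is the explicit form of the boundary-value-preserving projection: since $\mathcal{P}^{q-1}\dot{u}$ is orthogonal to $\mathbb{P}^{q-2}$ and matches $\dot{u}$ at $t=1$, testing the orthogonality condition against $a L_j$ with $a\in H_0^1(\Omega)$ and $j\leq q-2$ forces the first $q-1$ Legendre coefficients to be retained, while the endpoint condition $\mathcal{P}^{q-1}\dot{u}(x,1)=\dot{u}(x,1)$ together with $L_i(1)=1$ lumps the entire tail into the top mode, giving $\mathcal{P}^{q-1}\dot{u}=\sum_{i=0}^{q-2} b_i L_i+\bigl(\sum_{i\geq q-1} b_i\bigr) L_{q-1}$. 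Here I would emphasize that the tail sum equals $\dot{u}(x,1)-\sum_{i=0}^{q-2}b_i(x)$, which lies in $H_0^1(\Omega)$ precisely because of the continuity hypothesis $\dot{u}\in C((-1,1]; H_0^1(\Omega))$. Integrating term by term and using property (2) of the Legendre polynomials to convert $\int_{-1}^{t} L_i$ into a combination of $L_{i+1}$ and $L_{i-1}$, I would then collect the contributions to each $L_i(t)$; this produces the closed-form coefficients $u_i^{\ast}(x)$ and shows directly that $\Pi^q u\in\mathbb{P}^q(I; H_0^1(\Omega))$.

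For uniqueness, I would argue at the level of the projection conditions themselves. Suppose two polynomials in $\mathbb{P}^q(I; H_0^1(\Omega))$ satisfy both the endpoint-matching and the orthogonality conditions; writing their difference as a finite Legendre series $\sum_{i=0}^{q} a_i(x) L_i(t)$ with $a_i\in H_0^1(\Omega)$, the orthogonality condition tested against $a L_k$ for arbitrary $a\in H_0^1(\Omega)$ and $0\leq k\leq q-1$ collapses, via the $L^2$-orthogonality of distinct Legendre polynomials, to $(a_k,a)_{L^2}=0$ for all such $a$. Density of $H_0^1(\Omega)$ in $L^2(\Omega)$ then yields $a_k=0$ for $0\leq k\leq q-1$, leaving only $a_q(x)L_q(t)$; the endpoint condition together with $L_q(1)=1$ finishes by forcing $a_q\equiv 0$.

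I expect the existence step to be the main obstacle, since it requires correctly identifying the non-standard form of the boundary-value-preserving projection (it is \emph{not} the plain $L^2$-orthogonal projection onto $\mathbb{P}^{q-1}$, and the endpoint constraint is what redistributes the high-frequency tail) and justifying the interchange of summation and integration in the $H_0^1(\Omega)$-valued setting, whereas the uniqueness step is a routine Legendre-orthogonality plus density argument.
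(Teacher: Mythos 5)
Your proposal is correct and follows essentially the same route as the paper: expand $\dot{u}$ in a Legendre series, use the explicit form of the boundary-value-preserving projection $\mathcal{P}^{q-1}\dot{u}$ (which the paper simply cites from Lemma 3.5 of \cite{SS}, while you sketch its derivation from the endpoint and orthogonality conditions), integrate term by term via the antiderivative identity for Legendre polynomials to exhibit $\Pi^q u$ as an explicit element of $\mathbb{P}^q(I;H_0^1(\Omega))$, and then prove uniqueness by the same Legendre-orthogonality, density-of-$H_0^1(\Omega)$-in-$L^2(\Omega)$, and endpoint-matching argument. The extra remarks you make (that the tail sum lies in $H_0^1(\Omega)$ by the continuity hypothesis, and the justification for interchanging summation and integration) are welcome refinements but do not change the argument.
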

\begin{lemma}\label{projproperties}
For any $u\in H^1(I;H_0^1(\Omega))$ such that $\dot{u}\in C((-1,1]; H_0^1(\Omega))$, we have for $q\geq 2$,
\begin{enumerate}
\item $(\Pi^q u- u)(x,-1)=0$;
\item $(\Pi^q u- u)(x,1)=0$;
\item $\partial_t(\Pi^q u-u)(x,1)=0$;
\item $\int_{I} \left(\partial_t\left( u-\Pi^q u\right), \varphi\right)_{L^2}\,\dd t=0 \mbox{ for all } \varphi\in\mathbb{P}^{q-2}(I; H_0^1(\Omega))$.

\begin{proof}
(a) follows directly from Definition \ref{proj}. For $(b)$, first note that we can expand $\dot{u}$ as $$\dot{u}=\sum_{i=0}^{\infty} b_i(x) L_i(t)$$ with coefficients $b_i\in H_0^1(\Omega)$. Then $u$ can be written as 
\begin{equation}\label{ueq}
u(x,t)=u(x,-1)+\sum_{i=0}^{\infty} b_i(x)\int_{-1}^t L_i (s) \:\dd s.
\end{equation}
By orthogonality of the Legendre polynomials, we have 
\begin{equation}\label{urelation}
u(x,1)=u(x,-1)+\sum_{i=0}^{\infty} b_i(x) \int_{-1}^1 L_i(s)\: \dd s= u(x,-1)+2b_0.
\end{equation}
Then 
\begin{align*}
\Pi^q u(x,1)&=u(x,-1)+\int_{-1}^1 \mathcal{P}^{q-1} \dot{u}(x,s)\,\dd s\\
{}&= u(x,-1)+\sum_{i=0}^{q-2}b_i(x)\int_{-1}^1 L_i(s)\,\dd s+\left( \sum_{i=q-1}^{\infty} b_i(x) \right) \int_{-1}^1 L_{q-1} (s) \, \dd s\\
{}&=u(x,-1)+2b_0 \\
{}&= u(x,1) \mbox{ (by (\ref{urelation}))}.
\end{align*}
(c) follows from taking the derivative with respect to $t$ in the equation
$$\Pi^q u(x,t)=u(x,-1)+\sum_{i=0}^{q-2} b_i(x)\int_{-1}^t L_i(s)\,\dd s +\left(\sum_{i=q-1}^{\infty} b_i(x)\right) \int_{-1}^t L_{q-1}(s)\,\dd s.$$ Note that $$\dot{ u}(x,t)=\sum_{i=0}^{\infty} b_i(x) L_i(t)$$
and $$\partial_t \Pi^q u(x,t)=\sum_{i=0}^{q-2} b_i(x) L_i(t)+\left(\sum_{i=q-1}^{\infty} b_i(x)\right) L_{q-1}(t).$$
Evaluating at $t=1$, we have $$\partial_t \Pi^q u(x,1)=\sum_{i=0}^{\infty} b_i(x)=\dot{ u}(x,1).$$ For (d), we can write any $q\in \mathbb{P}^{q-2}(I;H_0^1(\Omega))$ as $q(x,t)=\sum_{k=0}^{q-2} a_k(x) L_k(t)$. Then
\begin{align*}
&\int_{I} \left(\partial_t (u-\Pi^q u ), \varphi\right)_{L^2} \,\dd t\\
&=\int_{I} \sum_{k=0}^{q-2} \left(\partial_t (u-\Pi^q u ), a_kL_k\right)_{L^2} \,\dd t\\
&=\sum_{k=0}^{q-2}  \int_{I} \left(\partial_t u-\mathcal{P}^{q-1}\partial_t u) , a_k L_k\right)_{L^2}\,\dd t\\
&=\sum_{k=0}^{q-2}  \int_{I} \left(\left(\sum_{i=q}^{\infty} b_i(x) L_i(t)-\left(\sum_{i=q}^{\infty} b_i (x)\right)L_{q-1}(t) \right), a_k L_k(t)\right)_{L^2} \,\dd t\\
&= 0 
\end{align*}
by the orthogonality of Legendre polynomials.
\end{proof}
\end{enumerate}
\end{lemma}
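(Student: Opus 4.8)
The guiding observation is that differentiating the definition of $\Pi^q u$ in Definition \ref{proj} via the fundamental theorem of calculus yields the single identity
\begin{equation*}
\partial_t \Pi^q u(x,t) = \mathcal{P}^{q-1}\dot{u}(x,t) \qquad \text{for all } t\in(-1,1],
\end{equation*}
which is licit because $\mathcal{P}^{q-1}\dot{u}$ is a polynomial, hence continuous, in $t$. Once this is in hand, each of the four assertions reduces to one of the two defining conditions of $\mathcal{P}^{q-1}$ from Definition \ref{l2projdef}, so the argument is structural bookkeeping rather than hard analysis. Property (a) is the most immediate: I would evaluate the definition of $\Pi^q u$ at $t=-1$, where $\int_{-1}^{-1}\mathcal{P}^{q-1}\dot{u}\,\dd s$ vanishes, leaving $\Pi^q u(x,-1)=u(x,-1)$ and hence $(\Pi^q u-u)(x,-1)=0$.

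For (c) and (d) I would invoke the derivative identity directly. Evaluating it at $t=1$ and applying the boundary-value-preserving condition (\ref{zeroatend}) to $w=\dot{u}$ gives $\partial_t\Pi^q u(x,1)=\mathcal{P}^{q-1}\dot{u}(x,1)=\dot{u}(x,1)$, which, together with $\partial_t u(x,1)=\dot{u}(x,1)$, establishes (c). For (d), the identity turns $\partial_t(u-\Pi^q u)$ into $\dot{u}-\mathcal{P}^{q-1}\dot{u}$, and the orthogonality condition (\ref{orthogonality}) applied to $w=\dot{u}$—whose admissible test space is precisely $\mathbb{P}^{(q-1)-1}(I;H_0^1(\Omega))=\mathbb{P}^{q-2}(I;H_0^1(\Omega))$—reproduces the claim verbatim.

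Property (b) is the one requiring a small extra step. I would integrate the derivative identity over $I$, using (a) to cancel the value at $t=-1$, to obtain
\begin{equation*}
(\Pi^q u-u)(x,1)=\int_{-1}^1\bigl(\mathcal{P}^{q-1}\dot{u}-\dot{u}\bigr)(x,s)\,\dd s .
\end{equation*}
Testing the orthogonality condition (\ref{orthogonality}) against the $t$-independent function $\varphi\equiv a$ for an arbitrary $a\in H_0^1(\Omega)$—legitimate because $q\geq 2$ guarantees $\mathbb{P}^0\subseteq\mathbb{P}^{q-2}$—shows that this $L^2(\Omega)$ element is orthogonal to every $a\in H_0^1(\Omega)$; since $H_0^1(\Omega)$ is dense in $L^2(\Omega)$, the integral must vanish in $L^2(\Omega)$, giving (b).

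I do not expect a genuine analytic obstacle here. The only points meriting care are the consistent tracking of polynomial degrees—the hypothesis $q\geq 2$ is exactly what makes constants and the space $\mathbb{P}^{q-2}$ admissible test functions for $\mathcal{P}^{q-1}$—and the passage from ``orthogonal to all $H_0^1(\Omega)$ functions'' to ``zero in $L^2(\Omega)$'' via density. A purely computational alternative would be to substitute the explicit Legendre expansion (\ref{expansion}) obtained in Lemma \ref{projlemma} and read off each identity coefficient by coefficient, but the projection-based route above is shorter and keeps the role of Definition \ref{l2projdef} transparent.
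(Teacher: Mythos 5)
Your proof is correct, but it takes a genuinely different route from the paper's. You work entirely from the two defining conditions of $\mathcal{P}^{q-1}$ in Definition \ref{l2projdef} — the endpoint condition (\ref{zeroatend}) gives (c), the orthogonality condition (\ref{orthogonality}) gives (d) verbatim (with the correct degree count $\mathbb{P}^{(q-1)-1}=\mathbb{P}^{q-2}$), and (b) follows by integrating the identity $\partial_t\Pi^q u=\mathcal{P}^{q-1}\dot{u}$ over $I$ and testing (\ref{orthogonality}) against $t$-independent functions, which is admissible precisely because $q\geq 2$, followed by density of $H_0^1(\Omega)$ in $L^2(\Omega)$. The paper instead substitutes the explicit Legendre representation $\mathcal{P}^{q-1}\dot{u}=\sum_{i=0}^{q-2}b_iL_i+(\sum_{i\geq q-1}b_i)L_{q-1}$ from Lemma \ref{projlemma} and verifies each item coefficient by coefficient, using $L_i(1)=1$, $\int_{-1}^1L_i\,\dd t=0$ for $i\geq 1$, and Legendre orthogonality. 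Your argument is shorter, keeps the role of Definition \ref{l2projdef} transparent, and sidesteps the manipulations of infinite series (in particular the endpoint evaluation $\sum_{i=0}^\infty b_i(x)=\dot{u}(x,1)$, which the paper uses without comment and which implicitly relies on convergence of the Legendre series at $t=1$); the price is the extra density step in (b), where the paper's computation is direct. The paper's explicit expansions are not wasted, however, since they are reused in the proof of Proposition \ref{estprop}, so the two proofs reflect a stylistic choice rather than a difference in substance.
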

On an arbitrary time interval $I=(a,b)$, we define $\Pi_{I}^q$ via the linear map
$$T\colon \Omega\times (-1,1)\to \Omega\times (a,b)$$
$$(x,\xi)\mapsto (x,\frac{1}{2} (a+b+\xi (b-a)))$$
as \begin{equation}\label{projope}
\Pi_{I}^q u=[\Pi^q(u\circ T)]\circ T^{-1},
\end{equation} from which it follows that 
\begin{enumerate}
\item $(\Pi_{I}^q u-u)(x,a)=0;$
\item $(\Pi_{I}^q u-u)(x,b)=0;$
\item $\partial_t (\Pi_{I}^q u-u)(x,b)=0;$
\item $\int_{I} \left(\partial_t (u-\Pi_{I}^q u), \varphi\right)_{L^2} \,\dd t=0 \mbox{ for all } \varphi\in\mathbb{P}^{q-2}(I; H_0^1(\Omega))$.
\end{enumerate}
Similarly as before, on the generic interval $I=(a,b)$, $\Pi_{I}^q u$ can be written as
$$\Pi_{I}^q u(x,t)=\sum_{i=0}^q \tilde{u}_{i}^{\ast} \tilde{L_i}(t)=u(x,a)+\sum_{i=0}^{q-2} \tilde{b}_i(x)\int_{a}^t\tilde{L}_i(s)\,\dd s+\left( \sum_{i=q-1}^{\infty} \tilde{b}_i\right) \int_{a}^t \tilde{L}_{q-1}(s)\, \dd s,$$
where $\tilde{L}_i$ represents the $i$th mapped Legendre polynomial so that $u=\sum_{i=0}^{\infty} \tilde{u}_{i}\tilde{L}_i$ and the coefficients are defined as before but with $b_i$ replaced by $\tilde{b}_i$ for each $i\in\mathbb{N}\cup \{0\}$.

Now we generalize a standard approximation result stated in \cite{BS} by Babu\v{s}ka and Suri to functions in Bochner spaces.

\begin{proposition}\label{sequenceapproximate}
Let $I=(a,b)$ with $k=b-a>0$. For every $u\in H^{s}(I; H_0^1(\Omega))$, there exists a sequence $\{\mathcal{Q}^q u\}_{q\geq 0}$ with $\mathcal{Q}^q u\in\mathbb{P}^q(I;H_0^1(\Omega))$ such that for any $0\leq m\leq s$,
\begin{equation}\label{Hq}
\| u-\mathcal{Q}^q u\|_{H^m(I;H_0^1(\Omega))}\leq\: C\frac{k^{\mu-m}}{q^{s-m}}\|u\|_{H^s(I;H_0^1(\Omega))} \mbox{ for } s\geq 0,
\end{equation}
where $\mu=\min (q+1, s)$ and $C$ is a constant independent of $u$, $q$ and $k.$
\end{proposition}
Next, we define the projector $\mathcal{P}_I^{q} u$ on an arbitrary interval $I=(a,b)$ via the same linear map $T\colon \Omega\times (-1,1)\rightarrow \Omega \times (a,b)$ as before. That is, 
$$\mathcal{P}_{I}^q u=[\mathcal{P}^q(u\circ T)]\circ T^{-1}.$$
Then the following approximation result holds.
\begin{proposition}\label{estprop}
Let $I=(a,b)$ with $k=b-a>0$. For every $u\in H^s(I; H_0^1(\Omega))$, $s\geq 2$, we have 
\begin{equation}
\|\partial_t u-\mathcal{P}_{I}^{q-1}(\partial_t u)\|_{L^2(I; H_0^1(\Omega))}\leq \: C\frac{k^{\mu-1}}{q^{s-1}}\|u\|_{H^s(I; H_0^1(\Omega))},
\end{equation}
where $\mu=\min(q+1,s)$ and $C$ is a universal constant.
\begin{proof}
We give a sketch of the proof based on several results from \cite{SS} and \cite{Sc}. For $u\in H^s(I; H_0^1(\Omega))$, define $\hat{u}(x,\xi)$ for $\xi \in \hat{I}:=(-1,1)$  and $t=\frac{1}{2}(a+b+\xi k)$ by 
$$u(x,t)=u(x, \frac{1}{2}(a+b+\xi k)):=\hat{u}(x,\xi).$$
Then we have $\partial_{\xi} \hat{u}(x,\xi)=\frac{k}{2}\partial_t u(x,t).$ By Lemma \ref{projlemma}, we have 
$$\partial_{\xi} \hat{u}(x,\xi)=\sum_{i=0}^{\infty} b_i(x)L_i(\xi)$$ and 
$$\mathcal{P}^{q-1} \partial_{\xi} \hat{u}(x,\xi)=\sum_{i=0}^{q-2} b_i(x) L_i(\xi)+\left(\sum_{i=q-1}^{\infty} b_i(x) \right)L_{q-1}(\xi)$$
where $\{L_i\}_{i\geq 0}$ are Legendre polynomials defined on $\hat{I}:=(-1,1).$
This implies that 
$$\partial_{\xi} \hat{u}(x,\xi)-\mathcal{P}^{q-1}\partial_{\xi} \hat{u}(x,\xi)=\sum_{i=q}^{\infty} b_i(x)L_i(\xi)-\left(\sum_{i=q}^{\infty} b_i(x) \right) L_{q-1}(\xi).$$
Using the triangle inequality, we obtain
$$\|\partial_{\xi} \hat{u}-\mathcal{P}^{q-1} \partial_{\xi} \hat{u}\|_{L^2(\hat{I}; H_0^1(\Omega))}\leq\|\sum_{i=q}^{\infty} b_i(x)L_i(\xi) \|_{L^2(\hat{I}; H_0^1(\Omega))}+\|\left( \sum_{i=q}^{\infty} b_i(x)\right)L_{q-1}(\xi)\|_{L^2(\hat{I}; H_0^1(\Omega))}.$$
Note that by Lemma 3.9 from \cite{Sc}, we know that  
$$\|\partial_{\xi} \hat{u}-\mathbf{P}^{q-1} \partial_{\xi} \hat{u}\|_{L^2(\hat{I}; H_0^1(\Omega))}= \|\sum_{i=q}^{\infty} b_i(x)L_i(\xi) \|_{L^2(\hat{I}; H_0^1(\Omega))},$$
where $\mathbf{P}^{q-1}$ is the standard $L^2$-projection operator such that $$\|\partial_{\xi} \hat{u}-\mathbf{P}^{q-1} \partial_{\xi} \hat{u}\|_{L^2(\hat{I}; H_0^1(\Omega))}=\inf_{\varphi\in\mathbb{P}^{q-1}(\hat{I}; H_0^1(\Omega))}\|\partial_{\xi} \hat{u} -\varphi\|_{L^2(\hat{I}; H_0^1(\Omega))}.$$
Using the orthogonality of Legendre polynomials on $\hat{I}$, we have 
$$\| \sum_{i=q}^{\infty} b_i(x) L_i(\xi) \|_{L^2(\hat{I}; H_0^1(\Omega))}^2=\sum_{i=q}^{\infty} \|b_i\|_{H_0^1(\Omega)}^2 \frac{2}{2i+2}.$$ 
By Lemma 3.10 from \cite{Sc}, we know that for $1\leq s_0\leq s$ we have 
\begin{equation}\label{weightednorm}
\int_{-1}^1 | \hat{u}^{(s_0)}(x,\xi)|^2(1-\xi^2)^{s_0-1} \,\dd \xi=\sum_{i=s_0-1}^{\infty}|b_i(x)|^2 \frac{2}{2i+1}\frac{(i+(s_0-1))!}{(i-(s_0-1))!}
\end{equation}
where $\hat{u}^{(s_0)}(x,\xi)$ represents the $s_0$-th partial derivative of $\hat{u}(x,\xi)$ with respect to $\xi$.
Thus 
\begin{align*}
\|\sum_{i=q}^{\infty}  b_i(x) L_i(\xi)\|_{L^2(\hat{I};H_0^1(\Omega))}^2\leq & \:\frac{(q-(s_0-1))!}{(q+(s_0-1))!}\sum_{i=q}^{\infty} \|b_i\|_{H_0^1(\Omega)}^2\frac{2}{2i+2}\frac{(i+(s_0-1))!}{(i-(s_0-1))!}\\
{}\leq & \:\frac{(q-(s_0-1))!}{(q+(s_0-1))!} |\hat{u}|_{H^{s_0}(\hat{I}; H_0^1(\Omega))}^2 \mbox{ (by (\ref{weightednorm}))}
\end{align*}
for any $1\leq s_0\leq \min(q+1,s).$
Thus, we have 
\begin{align*}
\inf_{\varphi\in\mathbb{P}^{q-1}(\hat{I}; H_0^1(\Omega))}\|\partial_{\xi} \hat{u} -\varphi\|_{L^2(\hat{I}; H_0^1(\Omega))}^2&=\|\sum_{i=q}^{\infty}b_i(x) L_i(\xi)\|_{L^2(I;H_0^1(\Omega))}^2\\
{}&\leq \frac{(q-(\mu-1))!}{(q+(\mu-1))!} |\hat{u}|_{H^{\mu}(\hat{I}; H_0^1(\Omega))}^2
\end{align*}
where $\mu=\min(q+1,s).$  Note also that
\begin{align*}
\|\left(\sum_{i=q}^{\infty} b_i(x) \right) L_{q-1}(\xi) \|_{L^2(\hat{I}; H_0^1(\Omega))}^2=&\:\frac{2}{2q-1}\|\sum_{i=q}^{\infty} b_i(x) \|_{H_0^1(\Omega)}^2\\
{}\leq &\: \frac{4}{q}  \|\sum_{i=q}^{\infty} b_i(x) \|_{H_0^1(\Omega)}^2
\end{align*}
for $q\geq 2.$ From Lemma 3.6 in \cite{SS}, we know that 
 $$\|\sum_{i=q}^{\infty} b_i(x) \|_{H_0^1(\Omega)}^2\leq \: \frac{C_2}{q} \|\partial_{\xi \xi} \hat{u}\|_{L^2(\hat{I}; H_0^1(\Omega))}^2,$$
where $\partial_{\xi} \hat{u}(x,\xi)=\sum_{i=0}^{\infty} b_i(x) L_i(\xi)$ and $C_2$ is a positive constant.
Combining with the previous estimate, we have 
\begin{align}\label{ieq}
\|\partial_{\xi} \hat{u}-\mathcal{P}^{q-1} \partial_{\xi} \hat{u}\|_{L^2(\hat{I}; H_0^1(\Omega))}^2\leq &\: C\left\{\|\partial_{\xi} \hat{u} -\mathbf{P}^{q-1}\partial_{\xi} \hat{u}\|_{L^2(\hat{I}; H_0^1(\Omega))}^2+\frac{1}{q^2}\|\partial_{\xi\xi} \hat{u}\|_{L^2(\hat{I}; H_0^1(\Omega))}^2\right\}
\end{align}
for some constant $C>0.$
If we replace $\partial_{\xi} \hat{u}$ by $\partial_{\xi} \hat{u}-\varphi$ in (\ref{ieq}) for an arbitrary $\varphi\in\mathbb{P}^{q-1}(\hat{I};H_0^1(\Omega))$, then we have 
\begin{equation}\label{ieq2}
\|\partial_{\xi} \hat{u}-\mathcal{P}^{q-1} \partial_{\xi} \hat{u}\|_{L^2(\hat{I}; H_0^1(\Omega))}^2\leq \: C\left\{\inf_{\varphi\in\mathbb{P}^{q-1}(\hat{I}; H_0^1(\Omega))}\|\partial_{\xi} \hat{u} -\varphi\|_{L^2(\hat{I}; H_0^1(\Omega))}^2+\frac{1}{q^2}\|\partial_{\xi\xi} \hat{u} -\partial_{\xi} \varphi\|_{L^2(\hat{I}; H_0^1(\Omega))}^2 \right\}
\end{equation}
using the fact that 
$$\| \partial_{\xi} \hat{u}-\mathbf{P}^{q-1}\partial_{\xi}  \hat{u}\|_{L^2(\hat{I};H_0^1(\Omega))}\leq \|\partial_{\xi} \hat{u}-\varphi\|_{L^2(I; H_0^1(\Omega))} \mbox{ for any } \varphi\in\mathbb{P}^{q-1}(I; H_0^1(\Omega)).$$
Similarly, we can show that 
\begin{align*}
\|\partial_{\xi\xi}\hat{u} -\partial_{\xi} \varphi\|_{L^2(\hat{I}; H_0^1(\Omega))}^2\leq& \:\frac{(q-(s_0-1))!}{(q+(s_0-1))!}|\partial_{\xi} \hat{u}|_{H^{s_0}(\hat{I}; H_0^1(\Omega))}^2\\
{}=&\:\frac{(q-(s_0-1))!}{(q+(s_0-1))!}|\hat{u}|_{H^{s_0+1}(\hat{I}; H_0^1(\Omega))}^2
\end{align*}
for $1\leq s_0\leq \min(q+1,s-1)$. Thus, 
$$\|\partial_{\xi\xi}\hat{u} -\partial_{\xi} \varphi\|_{L^2(\hat{I}; H_0^1(\Omega))}^2\leq\:\frac{(q-(\mu-2))!}{(q+(\mu-2))!}|\hat{u}|_{H^{\mu}(\hat{I}; H_0^1(\Omega))}^2$$
for $\mu=\min(q+2,s)$.
Therefore,
\begin{align*}
\|\partial_{\xi} \hat{u}-\mathcal{P}^{q-1}\partial_{\xi} \hat{u}\|_{L^2(\hat{I};H_0^1(\Omega))}^2\leq & \:C\left\{\frac{(q-(\mu-1))!}{(q+(\mu-1))!} |\hat{u}|_{H^{\mu}(\hat{I}; H_0^1(\Omega))}^2+\frac{1}{q^2}\frac{(q-(\mu-2))!}{(q+(\mu-2))!} |\hat{u}|_{H^{\mu}(\hat{I}; H_0^1(\Omega))}^2\right\}\\
{}\leq & \:\tilde{C} \frac{1}{q^2}\frac{(q-(\mu-2))!}{(q+(\mu-2))!}|\hat{u}|_{H^{\mu}(\hat{I}; H_0^1(\Omega))}^2
\end{align*}
where $\mu=\min(q+1,s).$ Scaling back to $I=(a,b)$, we have 
\begin{align*}
\|\partial_t u-\mathcal{P}_{I}^{q-1}\partial_t u\|_{L^2(I; H_0^1(\Omega))}^2=& \:\int_{a}^b \|\partial_t u (\cdot, t)-\mathcal{P}_{I}^{q-1}\partial_t u(\cdot, t)\|_{H_0^1(\Omega)}^2 \,\dd t\\
{}=&\:\left(\frac{k}{2}\right)^{-1} \int_{-1}^1 \|\partial_{\xi}\hat{u}(\cdot, \xi)-\mathcal{P}^{q-1}\partial_{\xi} u(\cdot,\xi)\|_{H_0^1(\Omega)}^2\,\dd \xi\\
{}=&\:\left(\frac{k}{2}\right)^{-1}\|\partial_{\xi}\hat{u}-\mathcal{P}^{q-1}\partial_{\xi}\hat{u}\|_{L^2(\hat{I}; H_0^1(\Omega))}^2\\
{}\leq&\: \tilde{C} \frac{1}{q^2} \frac{(q-(\mu-2))!}{(q+(\mu-2))!} \left(\frac{k}{2}\right)^{-1}|\hat{u}|_{H^{\mu}(\hat{I}; H_0^1(\Omega))}^2\\
{}=&\: \tilde{C} \frac{1}{q^2} \frac{(q-(\mu-2))!}{(q+(\mu-2))!} \left(\frac{k}{2}\right)^{-1} \int_{-1}^{1} \|\hat{u}^{(\mu)}(\cdot,\xi)\|_{H_0^1(\Omega)}^2 \,\dd \xi \\
{}=&\: \tilde{C} \frac{1}{q^2} \frac{(q-(\mu-2))!}{(q+(\mu-2))!} \left(\frac{k}{2}\right)^{-1} \int_{a}^{b} \left(\frac{k}{2}\right)^{2\mu-1} \|u^{(\mu)}(\cdot,t)\|_{H_0^1(\Omega)}^2 \,\dd t \\
{}=&\: \tilde{C} \frac{1}{q^2}\frac{(q-(\mu-2))!}{(q+(\mu-2))!} \left(\frac{k}{2}\right)^{2\mu-2} \int_{a}^{b} \|u^{(\mu)}(\cdot,t)\|_{H_0^1(\Omega)}^2 \,\dd t \\
{}=&\: \tilde{C} \frac{1}{q^2}\frac{(q-(\mu-2))!}{(q+(\mu-2))!} \left(\frac{k}{2}\right)^{2\mu-2}  |u|_{H^{\mu}(I; H_0^1(\Omega))}^2 \\
\end{align*}
where $\mu=\min(q+1,s).$ Selecting $\mu=s$, we have 
\begin{align*}
\|\partial_t u-\mathcal{P}_{I}^{q-1}\partial_t u\|_{L^2(I; H_0^1(\Omega))}^2\leq &\: \tilde{C} \frac{1}{q^2}\frac{(q-(s-2))!}{(q+(s-2))!}\left(\frac{k}{2}\right)^{2(s-1)}|u|_{H^s(I;H_0^1(\Omega))}^2\\
{}\leq &\: C\frac{k^{2(s-1)}}{q^{2(s-1)}}|u|_{H^s(I;H_0^1(\Omega))}^2
\end{align*}
where the second inequality follows from Stirling's formula as $q\to\infty.$
Therefore,
$$\|\partial_t u-\mathcal{P}_{I}^{q-1}\partial_t u\|_{L^2(I; H_0^1(\Omega))}\leq \:C\frac{k^{\mu-1}}{q^{s-1}}\|u\|_{H^s(I;H_0^1(\Omega))}$$
for $\mu=\min(q+1,s)$ since the semi-norm $|\cdot|_{H^s(I;H_0^1(\Omega))}$ is bounded by $\|\cdot\|_{H^s(I;H_0^1(\Omega))}.$
\end{proof}
\end{proposition}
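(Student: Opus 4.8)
The plan is to reduce everything to the reference interval $\hat I=(-1,1)$ via the affine map $T$ and to exploit the explicit Legendre representation of $\mathcal{P}^{q-1}$ supplied by Lemma \ref{projlemma}. First I would set $\hat u(x,\xi):=u(x,\tfrac12(a+b+\xi k))$, so that $\partial_\xi\hat u=\tfrac{k}{2}\partial_t u$, and expand $\partial_\xi\hat u=\sum_{i\ge 0}b_i(x)L_i(\xi)$ in a Legendre series with coefficients $b_i\in H_0^1(\Omega)$. By Lemma \ref{projlemma}, the modified projection retains the first $q-1$ modes exactly and collapses the entire tail into a single multiple of $L_{q-1}$, so that
$$\partial_\xi\hat u-\mathcal{P}^{q-1}\partial_\xi\hat u=\sum_{i\ge q}b_i L_i-\Bigl(\sum_{i\ge q}b_i\Bigr)L_{q-1}.$$
The whole estimate then rests on controlling these two terms separately in $L^2(\hat I;H_0^1(\Omega))$ and scaling the result back to $I$.

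For the first term I would observe that $\sum_{i\ge q}b_i L_i$ is exactly the error $\partial_\xi\hat u-\mathbf{P}^{q-1}\partial_\xi\hat u$ of the \emph{standard} orthogonal $L^2$-projection, which by Legendre orthogonality equals the best approximation $\inf_{\varphi\in\mathbb{P}^{q-1}}\|\partial_\xi\hat u-\varphi\|$. To turn this into an algebraic rate I would invoke the weighted Sobolev identity (\ref{weightednorm}), which relates the weighted sums $\sum_{i\ge q}\|b_i\|^2/(2i+1)$ to the seminorm $|\hat u|_{H^{s_0}}$ through the factorial weights $(i+s_0-1)!/(i-s_0-1)!$, yielding a bound of the form $\tfrac{(q-(\mu-1))!}{(q+(\mu-1))!}\,|\hat u|_{H^{\mu}(\hat I;H_0^1(\Omega))}^2$ with $\mu=\min(q+1,s)$.

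The genuinely new, and hardest, ingredient is the correction term $(\sum_{i\ge q}b_i)L_{q-1}$, which has no counterpart in the classical theory precisely because $\mathcal{P}^{q-1}$ is \emph{not} the orthogonal projection. Using $\|L_{q-1}\|_{L^2(\hat I)}^2=\tfrac{2}{2q-1}\le\tfrac{4}{q}$, the problem reduces to estimating $\|\sum_{i\ge q}b_i\|_{H_0^1(\Omega)}^2$, which is a sum of tail coefficients rather than a sum of their squares and therefore cannot be read off directly from orthogonality. I would control it by Lemma 3.6 of \cite{SS}, which bounds such a tail sum by $\tfrac{C_2}{q}\|\partial_{\xi\xi}\hat u\|_{L^2}^2$; re-inserting this and applying (\ref{weightednorm}) one derivative higher (now to $\partial_\xi\hat u$) produces an $H^{\mu}$-type control that, after one further use of the factorial identity, is of the same order as the first term, up to the extra harmless factor $q^{-2}$.

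Finally I would combine the two contributions, replace $\partial_\xi\hat u$ by $\partial_\xi\hat u-\varphi$ for an arbitrary $\varphi\in\mathbb{P}^{q-1}$ to recast the seminorm bounds in best-approximation form, and scale back to $I=(a,b)$. The change of variables inserts the factor $(k/2)^{2\mu-2}$ in front of $|u|_{H^{\mu}(I;H_0^1(\Omega))}^2$, and the surviving factorial ratio $\tfrac{(q-(\mu-2))!}{(q+(\mu-2))!}$ is converted into the clean decay $q^{-2(s-1)}$ by Stirling's formula as $q\to\infty$ (taking $\mu=s$); bounding the seminorm by the full $H^s$-norm then gives the claimed estimate. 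I expect the only real obstacle to be the treatment of the non-orthogonal correction term together with the bookkeeping of the factorial weights needed to extract the sharp power $q^{s-1}$; the remaining steps are scaling and Legendre orthogonality.
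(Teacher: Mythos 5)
Your proposal follows essentially the same route as the paper's own proof: the same affine rescaling to $\hat I=(-1,1)$, the same splitting of the error into the standard orthogonal-projection tail plus the lumped $L_{q-1}$ correction, the same use of the weighted identity (\ref{weightednorm}) and Lemma 3.6 of \cite{SS} for the tail sum, and the same replacement of $\partial_\xi\hat u$ by $\partial_\xi\hat u-\varphi$ followed by scaling and Stirling's formula. The argument is correct and matches the paper's proof in all essential respects.
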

As a result, the following estimates also hold.
\begin{lemma}\label{projlem}
Let $I=(a,b)$ with $k=b-a>0$. For every $u\in H^s(I;H_0^1(\Omega))$, $s\geq 2$, we have 
\begin{equation}\label{firstder}
\|\partial_t(u-\Pi_{I}^q u)\|_{L^2(I;H_0^1(\Omega))}\leq \: C \frac{k^{\mu-1}}{q^{s-1}}\|u\|_{H^s(I; H_0^1(\Omega))},
\end{equation}
\begin{equation}\label{non-derivative}
\|u-\Pi_{I}^q u\|_{L^2(I;H_0^1(\Omega))}\leq \: C \frac{k^ {\mu}}{q^{s-1}}\|u\|_{H^s(I; H_0^1(\Omega))},
\end{equation}
where $\mu=\min(q+1,s)$, $\Pi_{I}^q$ is the projection operator as defined in (\ref{projope}), and $C$ is a universal constant.
\begin{proof}
To prove (\ref{firstder}), we first note that $\partial_t(\Pi_{I}^q u)=\mathcal{P}_{I}^{q-1}\partial_{t} u$ by definition.
\begin{align*}
\|\partial_{t}(u-\Pi_{I}^q u)\|_{L^2(I;H_0^1(\Omega))}=&\:\|\partial_t u-\partial_t(\Pi_{I}^q u)\|_{L^2(I;H_0^1(\Omega))}\\
{}=&\:\|\partial_t u-\mathcal{P}_{I}^{q-1} \partial_t u\|_{L^2(I; H_0^1(\Omega))}\\
{}\leq &\: C \frac{k^{\mu-1}}{q^{s-1}}\|u\|_{H^s(I;H_0^1(\Omega))} \mbox{ (by Proposition \ref{estprop})}.
\end{align*}
For (b), we have 
\begin{align*}
\| u-\Pi_{I}^q u\|_{L^2(I; H_{0}^1(\Omega))}^2=&\: \left\Vert u(x,a)+\int_{a}^t \partial_s u(x,s)\,\dd s-u(x,a)-\int_{a}^t \mathcal{P}_{I}^{q-1}\partial_{s} u(x,s) \,\dd s\right\Vert_{L^2(I;H_0^1(\Omega))}^2\\
{}=&\:\left\Vert\int_{a}^t \partial_s u(x,s)\,\dd s-\int_{a}^t \mathcal{P}_{I}^{q-1}\partial_s u(x,s)\,\dd s\right\Vert_{L^2(I;H_0^1(\Omega))}^2\\
{}=&\: \int_{a}^b \left\Vert\int_{a}^t \partial_s u(\cdot,s)-\mathcal{P}_{I}^{q-1} \partial_{s} u(\cdot,s)\,\dd s\right\Vert_{H_0^1(\Omega)}^2\,\dd t\\
{}\leq & \:\int_{a}^b \left\Vert\left(\int_{a}^t 1\,\dd s\right)^{\frac{1}{2}}\left( \int_{a}^t |\partial_{s} u(\cdot,s)-\mathcal{P}_{I}^{q-1}\partial_s u(\cdot,s)|^2\,\dd s\right)^{\frac{1}{2}}\right\Vert_{H_0^1(\Omega)}^2 \,\dd t\\
{}\leq &\:\int_{a}^ b k\left\Vert\left( \int_{a}^t |\partial_{s} u(\cdot,s)-\mathcal{P}_{I}^{q-1}\partial_s u(\cdot,s)|^2\,\dd s\right)^{\frac{1}{2}}\right\Vert_{H_0^1(\Omega)}^2 \,\dd t\\
{}\leq &\: k^2 \left\Vert\left( \int_{a}^t |\partial_{s} u(\cdot,s)-\mathcal{P}_{I}^{q-1}\partial_{s} u(\cdot,s)|^2\,\dd s\right)^{\frac{1}{2}}\right\Vert_{H_0^1(\Omega)}^2 \\
{}=&\:k^2 \| \partial_s u-\mathcal{P}_{I}^{q-1}\partial_s u\|_{L^2(I;H_0^1(\Omega))}^2\\
\leq & \:  C^2k^2 \frac{k^{2\mu-2}}{q^{2s-2}} \|u\|_{H^s(I; H_0^1(\Omega))}^2.
\end{align*}
Thus 
\begin{align*}
\|u-\Pi_{I}^q u\|_{L^2(I; H_0^1(\Omega))}\leq  &\: C \frac{k^{\mu}}{q^{s-1}}\|u\|_{H^s(I; H_0^1(\Omega))}.
\end{align*}
\end{proof}
\end{lemma}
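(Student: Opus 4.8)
The plan is to reduce both bounds to the single approximation estimate already proved in Proposition \ref{estprop}, exploiting the fact that $\Pi_I^q$ is constructed by applying the projector $\mathcal{P}_I^{q-1}$ to the time derivative. The key structural observation is that $\partial_t(\Pi_I^q u) = \mathcal{P}_I^{q-1}\partial_t u$, so that differencing $u$ against $\Pi_I^q u$ is, after one differentiation, exactly the error measured in Proposition \ref{estprop}; the non-derivative estimate then follows by integrating this in time.

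For (\ref{firstder}) I would differentiate the defining formula $\Pi_I^q u(x,t) = u(x,a) + \int_a^t \mathcal{P}_I^{q-1}\partial_s u(x,s)\,\dd s$ with respect to $t$. The constant-in-$t$ term $u(x,a)$ drops out, leaving $\partial_t(\Pi_I^q u) = \mathcal{P}_I^{q-1}\partial_t u$, and hence $\partial_t(u - \Pi_I^q u) = \partial_t u - \mathcal{P}_I^{q-1}\partial_t u$. Taking $L^2(I;H_0^1(\Omega))$-norms, the estimate is then precisely the content of Proposition \ref{estprop}. This step is immediate and requires no further computation.

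For (\ref{non-derivative}) the idea is to recover the extra power of $k$ by integrating in time. Writing $u(x,t) = u(x,a) + \int_a^t \partial_s u(x,s)\,\dd s$ and subtracting the defining formula for $\Pi_I^q u$, the boundary terms $u(x,a)$ cancel and I obtain
$$(u - \Pi_I^q u)(x,t) = \int_a^t \bigl(\partial_s u(x,s) - \mathcal{P}_I^{q-1}\partial_s u(x,s)\bigr)\,\dd s.$$
Taking the $H_0^1(\Omega)$-norm and applying the Cauchy--Schwarz inequality in $s$ produces a factor $(t-a)^{1/2} \leq k^{1/2}$, and then integrating over $t \in I$ contributes another factor of $k$; altogether a factor $k^2$ multiplies $\|\partial_s u - \mathcal{P}_I^{q-1}\partial_s u\|_{L^2(I;H_0^1(\Omega))}^2$. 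Invoking Proposition \ref{estprop} once more and taking square roots yields the bound with $k^{\mu}$ in place of $k^{\mu-1}$, as claimed.

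Neither step presents a genuine obstacle. The only point requiring care is to use Cauchy--Schwarz (rather than a crude $L^\infty$-in-time bound) so that exactly one extra power of $k$ appears without losing a factor of $q$ in the denominator. The remaining mild subtlety is the Bochner setting, but since $\mathcal{P}_I^{q-1}$ acts on the spatial Legendre coefficients and commutes with the $H_0^1(\Omega)$-norm through the orthogonal expansion, no difficulty arises beyond what Proposition \ref{estprop} already handles.
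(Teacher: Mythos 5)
Your proposal is correct and follows essentially the same route as the paper: the first bound is obtained by observing $\partial_t(\Pi_I^q u)=\mathcal{P}_I^{q-1}\partial_t u$ and invoking Proposition \ref{estprop}, and the second by writing $u-\Pi_I^q u$ as the time integral of that error, applying Cauchy--Schwarz in $s$ to gain $k^{1/2}$, and integrating over $t$ for the remaining factor of $k$. Nothing is missing.
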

We can also derive the following result using Lemma \ref{projlem}.
\begin{corollary}\label{cor1}
Let $I=(a,b)$ with $k=b-a$. Then, for every $u\in H^s(I; H_0^1(\Omega))$ with $s\geq 2$, we have
\begin{equation}\label{2ndoder}
\|\partial_{tt} (u-\Pi_{I}^q u)\|_{L^2(I; H_0^1(\Omega))}\leq\: C \frac{k^{\mu-2}}{q^{s-3}}\|u\|_{H^s(I; H_0^1(\Omega))}
\end{equation}
where $\mu=\min(q+1,s)$ and $C$ is a universal constant, which may vary from line to line.
\begin{proof}
\begin{flalign*}
\|\partial_{tt}(u-\Pi_{I}^q u)\|_{L^2(I; H_0^1(\Omega))} &\leq  \|\partial_{tt} (u-\mathcal{Q}^q u)\|_{L^2(I; H_0^1(\Omega))}+\|\partial_{tt} (\mathcal{Q}^q u -\Pi_{I}^q u)\|_{L^2(I; H_0^1(\Omega))}&&\\
&\leq   C\frac{k^{\mu-2}}{q^{s-2}}\|u\|_{H^s(I; H_0^1(\Omega))}+ C_{\mathrm{inv}}\frac{q^2}{k}\|\partial_t (\mathcal{Q}^q u-\Pi_{I}^q u)\|_{L^2(I; H_0^1(\Omega))}&&\\
&= C \frac{k^{\mu-2}}{q^{s-2}}\|u\|_{H^s(I; H_0^1(\Omega))} + C_{\mathrm{inv}}\frac{q^2}{k}\|\partial_t \mathcal{Q}^q u-\mathcal{P}_{I}^{q-1} \partial_t u\|_{L^2(I; H_0^1(\Omega))}&&\\
&\leq  C\frac{k^{\mu-2}}{q^{s-2}}\|u\|_{H^s(I; H_0^1(\Omega))}+C_{\mathrm{inv}}\frac{q^2}{k}\|\partial_t(u-\mathcal{Q}^q u)\|_{L^2(I;H_0^1(\Omega))}&&\\
&\quad+ C_{\mathrm{inv}}\frac{q^2}{k}\|\partial_t u-\mathcal{P}_{I}^{q-1}\partial_t u\|_{L^2(I; H_0^1(\Omega))}&&\\
&\leq   C\frac{k^{\mu-2}}{q^{s-2}}\|u\|_{H^s(I; H_0^1(\Omega))}+C\frac{q^2}{k}\frac{k^{\mu-1}}{q^{s-1}}\| u\|_{H^s(I;H_0^1(\Omega))} &&\\
&\quad+C\frac{q^2}{k}\frac{k^{\mu-1}}{q^{s-1}}\|u\|_{H^s(I; H_0^1(\Omega))}&&\\
&\leq   C\frac{k^{\mu-2}}{q^{s-3}}\|u\|_{H^s(I; H_0^1(\Omega))},&&
\end{flalign*}
where we have used Proposition \ref{sequenceapproximate} and the inequality $\|\varphi\|_{H^1(I;H_0^1(\Omega))}\leq \:C_{\mathrm{inv}} \frac{q^2}{k}\|\varphi\|_{L^2(I;H_0^1(\Omega))}$ for some constant $C_{\mathrm{inv}}$ with $\varphi\in \mathbb{P}^{q-1}(I; H_0^1(\Omega))$ for $q\geq 1$ \cite{Sc} in the second step. Note that for the second last step, we have used the estimates from Propositions \ref{sequenceapproximate} and \ref{estprop}.
\end{proof}
\end{corollary}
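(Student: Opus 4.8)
The plan is to estimate the second-order temporal derivative of the projection error by comparison with the near-best polynomial approximant $\mathcal{Q}^q u$ supplied by Proposition \ref{sequenceapproximate}, since the defining conditions of $\Pi_I^q$ only control the \emph{first} derivative of the error (through $\partial_t\Pi_I^q u=\mathcal{P}_I^{q-1}\partial_t u$) and therefore give no direct orthogonality handle on $\partial_{tt}$. First I would insert $\mathcal{Q}^q u$ and split by the triangle inequality,
$$\|\partial_{tt}(u-\Pi_I^q u)\|_{L^2(I;H_0^1(\Omega))}\leq \|\partial_{tt}(u-\mathcal{Q}^q u)\|_{L^2(I;H_0^1(\Omega))}+\|\partial_{tt}(\mathcal{Q}^q u-\Pi_I^q u)\|_{L^2(I;H_0^1(\Omega))}.$$
The first term is immediately controlled by Proposition \ref{sequenceapproximate} with $m=2$, yielding a bound of order $k^{\mu-2}q^{-(s-2)}\|u\|_{H^s(I;H_0^1(\Omega))}$.

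For the second term, the key observation is that $\mathcal{Q}^q u-\Pi_I^q u$ is a polynomial of degree at most $q$ in $t$, so I may apply the polynomial inverse estimate $\|\varphi\|_{H^1(I;H_0^1(\Omega))}\leq C_{\mathrm{inv}}(q^2/k)\|\varphi\|_{L^2(I;H_0^1(\Omega))}$, with $\varphi=\partial_t(\mathcal{Q}^q u-\Pi_I^q u)$, to trade one order of differentiation for a factor $q^2/k$. This reduces the problem to estimating $\|\partial_t(\mathcal{Q}^q u-\Pi_I^q u)\|_{L^2(I;H_0^1(\Omega))}$. Here I would use the defining identity $\partial_t\Pi_I^q u=\mathcal{P}_I^{q-1}\partial_t u$ together with a further triangle inequality,
$$\|\partial_t(\mathcal{Q}^q u-\Pi_I^q u)\|_{L^2(I;H_0^1(\Omega))}\leq \|\partial_t(\mathcal{Q}^q u-u)\|_{L^2(I;H_0^1(\Omega))}+\|\partial_t u-\mathcal{P}_I^{q-1}\partial_t u\|_{L^2(I;H_0^1(\Omega))},$$
bounding the two resulting terms by Proposition \ref{sequenceapproximate} (with $m=1$) and Proposition \ref{estprop}, respectively; both contribute at order $k^{\mu-1}q^{-(s-1)}\|u\|_{H^s(I;H_0^1(\Omega))}$.

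Collecting the pieces, the inverse-estimate factor $q^2/k$ multiplied by the first-derivative bound of order $k^{\mu-1}q^{-(s-1)}$ produces a term of order $k^{\mu-2}q^{-(s-3)}$, which dominates the directly estimated first term (whose $q$-exponent is larger), and gives the claimed bound. The main obstacle is precisely this loss of a power of $q$: because $\Pi_I^q$ carries no orthogonality control on $\partial_{tt}$, one is forced to route the second-derivative bound through the inverse inequality, which is exactly what is responsible for the $q^{s-3}$—rather than $q^{s-2}$—appearing in the denominator of the final estimate. I would track the constants only up to a universal $C$, absorbing $C_{\mathrm{inv}}$, and note that the argument requires $s\geq 2$ so that $\mu=\min(q+1,s)$ is well defined and all of the invoked propositions apply.
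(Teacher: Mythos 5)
Your proposal is correct and follows essentially the same route as the paper's own proof: the same insertion of $\mathcal{Q}^q u$ and triangle inequality, the same use of the inverse estimate on the degree-$(q-1)$ polynomial $\partial_t(\mathcal{Q}^q u-\Pi_I^q u)$, the same identity $\partial_t\Pi_I^q u=\mathcal{P}_I^{q-1}\partial_t u$ with a second triangle inequality, and the same invocation of Propositions \ref{sequenceapproximate} and \ref{estprop} to produce the $k^{\mu-2}/q^{s-3}$ bound. Your remark that the loss of one power of $q$ is forced by routing the second derivative through the inverse inequality accurately explains the exponent $s-3$ in the final estimate.
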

\begin{remark}\label{remarkonspace}
If we change the spatial function space from $H_0^1(\Omega)$ to $L^2(\Omega)$, the same estimate follows. That is, for every $u\in H^s(I; H_0^1(\Omega))$ with $s\geq 2$, we have
\begin{equation}\label{2norder2}
\|\partial_{tt} (u-\Pi_{I}^r u)\|_{L^2(I; L^2(\Omega))}\leq\: C \frac{k^{\mu-2}}{q^{s-3}}\|u\|_{H^s(I; L^2(\Omega))}
\end{equation}
where $\mu=\min(q+1,s)$ and $C$ is a universal constant.
\end{remark}
Before we begin the proof of the main convergence theorem, we state some useful properties of the projection error first. Assume that $u\in H^s((0,T); H_0^1(\Omega))$ with $s\geq 2$ is the weak solution of (\ref{setup})--(\ref{bc}) and let $\Pi_{I}^q u\in\mathcal{V}^{\mathrm{q}}$ be the projection of $u$ such that $\Pi_{I}^q u_{\mid I_n}=\Pi_{I_n}^{q_n} u\in \mathcal{V}^{q_n}$ is defined according to (\ref{projope}) for $n=1,\ldots, N.$ Then the following properties hold for $q_n\geq 2\colon$ 
\begin{enumerate}
\item $(u-\Pi_{I}^q u)(x,t_n^{\pm})=0 \mbox{ for } n=0, 1,\ldots, N$; 
\item $\partial_t(u-\Pi_{I}^q u)(x,t_n^{-})=0 \mbox{ for } n=1,\ldots, N$;
\item $\int_{t_{n-1}}^{t_n} \left(\partial_t (u-\Pi_{I}^q),\varphi\right)\,\dd t \mbox{ for all } \varphi\in\mathbb{P}^{q_n-2}(I; H_0^1(\Omega))$.
\end{enumerate}

Now we are ready to prove the following convergence theorem.
\begin{theorem}
Let $u$ be the solution to (\ref{setup})--(\ref{bc}), and let $u_{\mathrm{DG}}\in \mathcal{V}^{\mathbf{q}}$ be the discontinuous Galerkin approximation of $u$. That is,
$$\mathcal{A}(u_{\mathrm{DG}},v)=\mathcal{F}(v)\mbox{ for all } v\in\mathcal{V}^{\mathbf{q}}.$$
If $u_{\mid I_n}\in H^{s_n}(I_n; H^2(\Omega))$ for any $n=1,\ldots, N$ with $s_n\geq 2$, then 
$$|||u-u_{\mathrm{DG}}|||\leq  C\left(\sum_{n=1}^N \frac{k_n^{2\mu_n -3}}{q_n^{2s_n-6}} \|u\|_{H^{s_{n}}( I_n; L^2(\Omega))}^2+\sum_{n=1}^N \frac{k_n^{2\mu_n}}{q_n^{2s_n-2}} \| u\|_{H^{s_n}( I_n; H^2(\Omega))}^2 \right)^{\frac{1}{2}},$$
where $\mu_n=\min(q_n+1,s_n)$ for any $n=1,\ldots, N$ and $C$ is a constant independent of $u$, $q_n$ and $k_n$.
\begin{proof} 
Denote the global error by $e:=u-u_{\mathrm{DG}}$, and decompose it as $e=e^{\pi}+e^{h}$ with
$e^{\pi}=u-\Pi_{I}^q u$ and $e^h=\Pi_{I}^q u-u_{\mathrm{DG}}.$
First note that $e^{\pi}(x,t_n^{\pm})=(u-\Pi_I^q u)(x,t_n^{\pm})=0$ for $n=0,\ldots, N$ and $\dot{e}^{\pi}(x,t_n^{-})=\partial_t(u-\Pi_{I}^r u)(x,t_n^{-})=0$ for $n=1,\ldots, N.$ Therefore,
\begin{align*} 
|||e^{\pi}|||^2=&\:\frac{1}{2}\|\dot{e}^{\pi}(t_0^{+})\|_{L^2}^2+\frac{1}{2}\sum_{n=1}^{N-1}\|[\dot{e}^{\pi}]_{n}\|_{L^2}^2+\frac{1}{2}\|\dot{e}^{\pi} (t_N^{-})\|_{L^2}^2\\
{}&+ \frac{1}{2}\|\nabla e^{\pi}(t_0^{+})\|_{L^2}^2+\frac{1}{2}\sum_{n=1}^{N-1} \| \nabla [e
^{\pi}]_n\|_{L^2}^2+\frac{1}{2}\|\nabla e^{\pi}(t_N^{-})\|_{L^2}^2+\sum_{n=1}^N \int_{t_{n-1}}^{t_n} \| \dot{e}^{\pi} \|_{L^2}^2\,\dd t\\
{}=&\:\frac{1}{2}\| \dot{e}^{\pi}(t_0^{+})\|_{L^2}^2+\frac{1}{2}\sum_{n=1}^{N-1} \|\dot{e}^{\pi}(t_{n}^{+})-\dot{e}(t_{n}^{-})\|_{L^2}^2+\sum_{n=1}^N \int_{t_{n-1}}^{t_n} \| \dot{e}^{\pi} \|_{L^2}^2 \,\dd t \\
{}=& \:\frac{1}{2}\sum_{n=1}^N \|\dot{e}^{\pi} (t_{n-1}^{+})\|_{L^2}^2 +\sum_{n=1}^N \int_{t_{n-1}}^{t_n} \| \dot{e}^{\pi}\|_{L_2}^2\,\dd t.\\
\end{align*}
By considering $\dot{e}^{\pi}(t_{n-1}^{+})=-\int_{t_{n-1}^{+}}^{t_n^{-}} \ddot{e}^{\pi}(s) \,\dd s$, we have
\begin{align*}
|||e^{\pi}|||^2=&\: \sum_{n=1}^N \int_{t_{n-1}}^{t_n}\| \dot{e}^{\pi}\|_{L^2}^2\,\dd t+\frac{1}{2}\sum_{n=1}^N\| -\int_{t_{n-1}}^{t_n} \ddot{e}^{\pi}(s)\,\dd s\|_{L^2}^2\, \dd t \\
\leq &\: \sum_{n=1}^N \left( \|\dot{e}^{\pi} \|_{L^2(I_n; L^2(\Omega))}^2+\frac{k_n}{2} \|\ddot{e}^{\pi}\|_{L^2(I_n; L^2(\Omega))}^2 \right)\mbox{ (by H\"{o}lder's inequality)}\\
{}=&\: \sum_{n=1}^N \left( \| \partial_t (u-\Pi_{I_n}^{q_n} u)\|_{L^2(I_n;L^2(\Omega))}^2+\frac{k_n}{2} \|\partial_{tt} (u-\Pi_{I_n}^{q_n} u) \|_{L^2(I_n; L^2(\Omega))}^2 \right)\\
{}\leq&\: C\sum_{n=1}^N \frac{k_n^{2\mu_n-2}}{q_n^{2s_n-2}}\|u\|_{H^s(I_n; L^2(\Omega))}^2+C\sum_{n=1}^N \frac{k_n^{2\mu_n-3}}{q^{2s_n-6}}\|u\|_{H^s(I_n; L^2(\Omega))}^2 \mbox{ (by (\ref{firstder}) and  (\ref{2norder2})) }\\
{}\leq &\: C \sum_{n=1}^N \frac{k_n^{2\mu_n-3}}{q_n^{2s_n-6}}\|u\|_{H^s(I_n; L^2(\Omega))}^2,
\end{align*}
where $\mu_n=\min (q_n+1, s_n)$ for any $n=1,\ldots, N$ and $C$ is a universal constant, which may vary from line to line.

By Galerkin orthogonality, we obtain
$\mathcal{A}(e^{\pi}+e^h, e^h)=\mathcal{A}(u-u_{\mathrm{DG}}, e^h)=0$ as $e^h=\Pi_{I}^q u-u_{\mathrm{DG}}\in \mathcal{V}^{\mathbf{q}}.$ Then 
\begin{align*}
|||e^h|||^2=&\:\mathcal{A}(e^h, e^h)\\
{}=&\: -\mathcal{A} (e^{\pi}, e^h)\\
{}=&\:-\sum_{n=1}^N \int_{t_{n-1}}^{t_n} \left\langle\ddot{e}^{\pi}, \dot{e}^h\right\rangle \,\dd t -\sum_{n=1}^N \int_{t_{n-1}}^{t_n} (\dot{e}^{\pi},  \dot{e}^h)_{L^2} \,\dd t-\sum_{n=1}^{N} \int_{t_{n-1}}^{t_n} (\nabla e^{\pi}, \nabla \dot{e}^h)_{L^2}\,\dd t\\
{}&- \sum_{n=1}^{N-1} ([\dot{e}^{\pi} ]_n, \dot{e}^h(t_n^{+}))_{L^2}-\sum_{n=1}^{N-1} (\nabla [e^{\pi}]_n, \nabla e^h (t_n^{+}) )_{L^2}-(\dot{e}^{\pi}(t_0^{+}), \dot{e}^h(t_0^{+}))_{L^2}\\
{}&-(\nabla e^{\pi}(t_0^{+}), \nabla e^h(t_0^{+})).
\end{align*}
Integrating by parts the term $\left\langle\ddot{e}^{\pi}, \dot{e}^h\right\rangle$ and rearranging the addends, we have 
\begin{align*}
|||e^h|||^2=\:&\sum_{n=1}^N \int_{t_{n-1}}^{t_n} (\dot{e}^{\pi}, \ddot{e}^h)_{L^2} \,\dd t -\sum_{n=1}^N \int_{t_{n-1}}^{t_n} (\dot{e}^{\pi}, \dot{e}^h)_{L^2} \,\dd t-\sum_{n=1}^{N} \int_{t_{n-1}}^{t_n} (\nabla e^{\pi}, \nabla \dot{e}^h)_{L^2}\,\dd t\\
{}&+ \sum_{n=1}^{N-1} (\dot{e}^{\pi} (t_n^{-}), [\dot{e}^h]_n)_{L^2}-\sum_{n=1}^{N-1} (\nabla [e^{\pi}]_n,\nabla e^h (t_n^{+}) )_{L^2}-(\dot{e}^{\pi}(t_N^{-}), \dot{e}^h(t_N^{-}))_{L^2}\\
{}&-(\nabla e^{\pi}(t_0^{+}), \nabla e^h(t_0^{+}))_{L^2}.
\end{align*}
Note that the first term vanishes by the orthogonality condition that $\int_{t_{n-1}}^{t_n} \left(\dot{e}^{\pi}, \varphi\right)_{L^2}\,\dd t=0$ for all $\varphi\in \mathbb{P}^{q_n-2}(I_n;H_0^1(\Omega))$, and the terms on the second and third lines vanish by the properties of the projection error. Integrating by parts the term $\sum_{n=1}^N \int_{t_{n-1}}^{t_n} (\nabla e^{\pi}, \nabla \dot{e}^h )_{L^2}\,\dd t$ in the $x$ variable, we have
\begin{align*}
|||e^h|||^2=&\:-\sum_{n=1}^N \int_{t_{n-1}}^{t_n} (\dot{e}^{\pi}, \dot{e}^h)_{L^2} \,\dd t+\sum_{n=1}^N \int_{t_{n-1}}^{t_n} (\Delta e^{\pi},  \dot{e}^h )_{L^2}\,\dd t\\
{}\leq &\: \sum_{n=1}^N \int_{t_{n-1}}^{t_n} \int_{\Omega} | \dot{e}^{\pi}|| \dot{e}^h |\,\dd x\, \dd t+\sum_{n=1}^N \int_{t_{n-1}}^{t_n} \int_{\Omega} |\Delta e^{\pi}||\dot{e}^h |\,\dd x \,\dd t \\
{}\leq &\:\sum_{n=1}^N \| \dot{e}^{\pi} \|_{L^2(I_n; L^2(\Omega))}\|\dot{e}^h\|_{L^2(I_n; L^2(\Omega))}+\sum_{n=1}^N \|\Delta e^{\pi} \|_{L^2(I_n; L^2(\Omega))}\|\dot{e}^h\|_{L^2(I_n; L^2(\Omega))}\\
{}\leq &\: \sum_{n=1}^N \|\dot{e}^{\pi}\|_{L^2(I_n; L^2(\Omega))}^2+\frac{1}{4}\|\dot{e}^{h}\|_{L^2(I_n; L^2(\Omega))}^2+ \sum_{n=1}^N \|\Delta e^{\pi}\|_{L^2(I_n; L^2(\Omega))}^2+\frac{1}{4}\|\dot{e}^{h}\|_{L^2(I_n; L^2(\Omega))}^2
\end{align*}
by Young's inequality. Note that $\|\dot{e}^h\|_{L^2(I_n;L^2(\Omega))}\leq |||e^h|||$, so we can absorb the $\frac{1}{2}\|\dot{e}^h\|_{L^2(I_n;L^2(\Omega))}^2$ term into the left-hand side of the inequality to get
\begin{align*}
|||e^h|||^2\leq & \:2\sum_{n=1}^N \left( \|\dot{e}^{\pi}\|_{L^2(I_n; L^2(\Omega))}^2+\|\Delta e^{\pi}\|_{L^2(I_n; L^2(\Omega))}^2\right)\\
{}\leq & \:C\sum_{n=1}^N \frac{k_n^{2\mu_n -2}}{q_n^{2s_n-2}} \|u\|_{H^{s_{n}}( I_n; L^2(\Omega))}^2+\:C\sum_{n=1}^N \frac{k_n^{2\mu_n}}{q_n^{2s_n-2}} \|\Delta u\|_{H^{s_{n}}( I_n; L^2(\Omega))}^2 \mbox{    (by Lemma \ref{projlem})},
\end{align*}
where $\mu_n=\min(q_n+1, s_n)$ for any $n=1,\ldots, N.$ By the definition of the $H^2$--norm, we know that $\|\Delta u\|_{L^2(\Omega)}\leq \|u\|_{H^2(\Omega)}$.
Therefore,
\begin{align*}
|||e||| &\leq  |||e^{\pi}|||+|||e^h|||\\
&\leq   \left(C\sum_{n=1}^N \frac{k_n^{2\mu_n-3}}{q_n^{2s_n-6}} \|u\|_{H^s(I_n; L^2(\Omega))}^2\right)^{\frac{1}{2}}\\
& \quad+ \left(C\sum_{n=1}^N \frac{k_n^{2\mu_n -2}}{q_n^{2s_n-2}} \|u\|_{H^{s_{n}}( I_n; L^2(\Omega))}^2+C\sum_{n=1}^N \frac{k_n^{2\mu_n}}{q_n^{2s_n-2}} \| u\|_{H^{s_{n}}( I_n; H^2(\Omega))}^2 \right)^{\frac{1}{2}}\\
&\leq  C\left(\sum_{n=1}^N \frac{k_n^{2\mu_n -3}}{q_n^{2s_n-6}} \|u\|_{H^{s_{n}}( I_n; L^2(\Omega))}^2+\sum_{n=1}^N \frac{k_n^{2\mu_n}}{q_n^{2s_n-2}} \| u\|_{H^{s_{n}}( I_n; H^2(\Omega))}^2 \right)^{\frac{1}{2}},
\end{align*}
where $C$ is a universal constant, which may vary from line to line.
\end{proof}
\end{theorem}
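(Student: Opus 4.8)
The plan is to follow the standard discontinuous Galerkin error-splitting strategy. I would write the global error as $e = u - u_{\mathrm{DG}} = e^{\pi} + e^{h}$, where $e^{\pi} = u - \Pi_{I}^q u$ is the projection error and $e^{h} = \Pi_{I}^q u - u_{\mathrm{DG}} \in \mathcal{V}^{\mathbf{q}}$ is the discrete error, and then estimate $|||e^{\pi}|||$ and $|||e^{h}|||$ separately before combining them through the triangle inequality $|||e||| \leq |||e^{\pi}||| + |||e^{h}|||$. The projection $\Pi_{I}^q$ is chosen precisely because of the three projection-error properties stated just before the theorem (vanishing at every time node $t_n^{\pm}$, vanishing time derivative at $t_n^{-}$, and time-orthogonality against $\mathbb{P}^{q_n-2}$), which is what renders both estimates tractable.

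For the projection part I would substitute $e^{\pi}$ into the definition \eqref{case1norm} of the energy norm. Because $e^{\pi}(x,t_n^{\pm}) = 0$ for all $n$, every spatial-gradient contribution in $|||e^{\pi}|||$ drops out, and because $\dot{e}^{\pi}(x,t_n^{-}) = 0$ the jump terms collapse to $[\dot{e}^{\pi}]_n = \dot{e}^{\pi}(t_n^{+})$ while $\dot{e}^{\pi}(t_N^{-}) = 0$. This reduces $|||e^{\pi}|||^2$ to $\tfrac12 \sum_{n=1}^N \|\dot{e}^{\pi}(t_{n-1}^{+})\|_{L^2}^2 + \sum_{n=1}^N \int_{I_n} \|\dot{e}^{\pi}\|_{L^2}^2\,\dd t$. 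The first sum I would control by writing $\dot{e}^{\pi}(t_{n-1}^{+}) = -\int_{t_{n-1}}^{t_n} \ddot{e}^{\pi}(s)\,\dd s$ and applying the Cauchy--Schwarz inequality, which introduces a factor $k_n$ together with $\|\ddot{e}^{\pi}\|_{L^2(I_n; L^2(\Omega))}$. Both surviving quantities are then bounded by the approximation estimates \eqref{firstder} and \eqref{2norder2} (the $L^2(\Omega)$-in-space version of Remark \ref{remarkonspace}), and since the second-derivative bound carries the weaker power $q_n^{-(2s_n-6)}$, it dominates and yields the first sum in the asserted estimate.

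For the discrete part I would exploit the coercivity relation $\mathcal{A}(v,v)=|||v|||^2$ from \eqref{Variational Form}, so that $|||e^{h}|||^2 = \mathcal{A}(e^{h}, e^{h})$, together with Galerkin orthogonality $\mathcal{A}(e, e^{h}) = 0$, giving $|||e^{h}|||^2 = -\mathcal{A}(e^{\pi}, e^{h})$. Expanding $\mathcal{A}(e^{\pi}, e^{h})$ via \eqref{case1blinear} and integrating the term $\int_{I_n}\langle \ddot{e}^{\pi}, \dot{e}^{h}\rangle\,\dd t$ by parts in time is the crucial manipulation: the resulting volume term $\int_{I_n}(\dot{e}^{\pi}, \ddot{e}^{h})_{L^2}\,\dd t$ vanishes because $\ddot{e}^{h}|_{I_n} \in \mathbb{P}^{q_n-2}(I_n; H_0^1(\Omega))$ and $e^{\pi}$ satisfies the time-orthogonality property, while the endpoint and jump terms produced by the integration by parts cancel exactly against the jump terms already present in $\mathcal{A}$. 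What remains is $-\sum_n \int_{I_n}(\dot{e}^{\pi}, \dot{e}^{h})_{L^2}\,\dd t - \sum_n \int_{I_n}(\nabla e^{\pi}, \nabla \dot{e}^{h})_{L^2}\,\dd t$; integrating the last term by parts in space turns it into $\sum_n \int_{I_n}(\Delta e^{\pi}, \dot{e}^{h})_{L^2}\,\dd t$, which is exactly where the $H^2(\Omega)$ spatial regularity is consumed. A Cauchy--Schwarz and Young argument then bounds the right-hand side by $\|\dot{e}^{\pi}\|$ and $\|\Delta e^{\pi}\|$ against $\|\dot{e}^{h}\|_{L^2(I_n;L^2(\Omega))}$; since $\|\dot{e}^{h}\|_{L^2(I_n;L^2(\Omega))} \leq |||e^{h}|||$, I would absorb it into the left-hand side and conclude with \eqref{firstder}, \eqref{non-derivative} and $\|\Delta u\|_{L^2} \leq \|u\|_{H^2}$ to produce the second sum.

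The step I expect to be the main obstacle is the time integration by parts in the bound for $|||e^{h}|||$: after moving the time derivative off $\ddot{e}^{\pi}$, one must verify that every boundary and jump contribution either vanishes through the projection-error properties or matches a term already present in $\mathcal{A}$, leaving only the two clean volume integrals. Closely related is the legitimacy of the spatial integration by parts, which requires $\Delta e^{\pi} \in L^2(\Omega)$ and is what forces the $H^2(\Omega)$-in-space norm into the final bound and couples it, through the differing powers of $k_n$ and $q_n$, to the $L^2(\Omega)$-in-space term coming from $|||e^{\pi}|||$. Once these cancellations are secured, the remaining work is a routine application of the approximation lemmas.
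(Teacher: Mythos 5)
Your proposal is correct and follows essentially the same route as the paper's own proof: the identical splitting $e=e^{\pi}+e^{h}$, the reduction of $|||e^{\pi}|||$ via the projection-error properties and the identity $\dot{e}^{\pi}(t_{n-1}^{+})=-\int_{I_n}\ddot{e}^{\pi}\,\dd s$, and the treatment of $|||e^{h}|||$ by coercivity, Galerkin orthogonality, integration by parts in time and space, and absorption of $\|\dot{e}^{h}\|_{L^2(I_n;L^2(\Omega))}$. All the key cancellations you flag as potential obstacles are exactly the ones the paper verifies, so no further work is needed.
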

\begin{remark}\label{uniform_grid}
If we use uniform time intervals $k_n= k$, and uniform polynomial degrees $q_n=q\geq 2, s_n =s\geq 2$ for $n=1,\ldots, N$, then the error bound becomes
$$|||u-u_{\mathrm{DG}}|||\leq C\left(\frac{k^{2\mu -3}}{q^{2s-6}} \|u\|_{H^{s}( 0,T; L^2(\Omega))}^2+ \frac{k^{2\mu}}{q^{2s-2}} \| u\|_{H^{s_{n}}(0,T; H^2(\Omega))}^2 \right)^{\frac{1}{2}}\leq C\frac{k^{\mu-\frac{3}{2}}}{q^{s-3}}\| u\|_{H^{s}(0,T; H^2(\Omega))}$$
where $\mu=\min(q+1,s)$ and $C$ is a constant independent of $u$, $q$ and $k$.
\end{remark}
\section{Fully discrete numerical scheme}\label{fullydiscrete}
In this section, we shall construct a fully discrete scheme for the approximation of the solution of (\ref{setup})--(\ref{bc}). This numerical scheme combines a $hp$--DGFEM in the time direction with an $H^1(\Omega)$-conforming finite element approximation in the spatial variables. It is well-known that discontinuous Galerkin finite element methods offer certain advantages over standard continuous Galerkin methods when applied to the spatial discretization of the acoustic wave equation \cite{AMM}. For instance, the mass matrix has a desirable block diagonal structure, which gives a more efficient computation when using an explicit time scheme. However, for the sake of having a neat and concise convergence analysis, we stick to a continuous Galerkin method in the spatial direction for this paper; applying the DGFEM in both the time and spatial directions may be considered for our future work.
\subsection{Construction of the fully discrete scheme}
For the spatial discretization parameter $h\in(0,1)$, we define $\mathcal{V}_h$ to be a given family of finite element subspaces of  $H_0^1(\Omega)$ with polynomial degree $p\geq 1$ such that, 
\begin{equation}\label{inte_prop}
\inf_{v\in \mathcal{V}_h} \{\|u-v\|_{L^2}+h\|u-v\|_{H^1}\}\leq C h^{r+1}\|u\|_{H^{r+1}}, \quad 1\leq r\leq \min(m,p), 
\end{equation}
for $u\in H^m(\Omega)\cap H_0^1(\Omega).$

Now we introduce the space-time finite element space by 
$$\mathcal{V}_{kh}^{q_n}:=\{u\colon [0, \infty)\to \mathcal{V}_h;\: u\mid_{I_n} =\sum_{j=1}^{q_n} u_j t^j,\:u_j\in\mathcal{V}_h\},$$
with $q_n\geq 2$ for each $1\leq n\leq N$. For $\mathbf{q}:=[q_1, \ldots, q_N]^T\in\mathbb{N}^N$, we then define the space
$$\mathcal{V}_{kh}^{\mathbf{q}}:=\{u\colon [0, \infty)\to \mathcal{V}_{h};\: u\mid_{I_n}\in\mathcal{V}_{kh}^{q_n} \: \mbox{ for } n=1,\ldots,N\}.$$
Following the same procedure as in section \ref{semi-discrete}, we can now define the fully discrete discontinuous-in-time scheme. We seek a solution $u_{\mathrm{DG}}\in\mathcal{V}_{kh}^{\mathbf{q}}$ such that  
\begin{equation}\label{discrete_formulation}
\mathcal{A}(u_{\mathrm{DG}},v)=\mathcal{\tilde{F}}(v) \quad \text{ for all } v\in \mathcal{V}_{kh}^{\mathbf{q}},
\end{equation}
where the bilinear form $\mathcal{A}(\cdot, \cdot)\colon \mathcal{H}\times\mathcal{H}\to\mathbb{R}$ is defined by (\ref{case1blinear}) and $\mathcal{\tilde{F}}$ is a modified version of $\mathcal{F}$ in (\ref{LF}) defined as 
\begin{equation}\label{newLF}
\mathcal{\tilde{F}}(v):=\sum_{n=1}^N\int_{t_{n-1}}^{t_n}\left(f(t),\dot{v}(t)\right)_{L^2} \dd t+\left(\mathcal{P}_h u_1, \dot{v}(t_0^{+})\right)_{L^2}+\left(\nabla \mathcal{P}_h u_0, \nabla v(t_0^{+})\right)_{L^2},
\end{equation}
where $\mathcal{P}_h\colon H_0^1(\Omega)\to \mathcal{V}_h$ is the Ritz projection such that 
\begin{equation}\label{Ritz}
\left(\nabla \mathcal{P}_h u,\nabla \varphi\right)_{L^2}=\left(\nabla  u, \nabla\varphi\right)_{L^2} \quad \mbox{ for all } \varphi\in\mathcal{V}_h.
\end{equation}
The existence and uniqueness of the fully discrete solution $u_{\mathrm{DG}}\in\mathcal{V}_{kh}^{\mathbf{q}}$ of (\ref{discrete_formulation}) follow from the following proposition.
\begin{proposition}\label{existence}
For each $1\leq n\leq N$, the local problem
\begin{equation}\label{localprob}
\begin{split}
&\int_{t_{n-1}}^{t_n} \left\langle \ddot{u}, \dot{v}\right\rangle \,\dd t+\int_{t_{n-1}}^{t_n} (\dot{u}, \dot{v})_{L^2} \,\dd t+\int_{t_{n-1}}^{t_n} (\nabla u, \nabla \dot{v} )_{L^2}\,\dd t +(\dot{u}_{n-1}^{+}, \dot{v}_{n-1}^{+})_{L^2}+(\nabla u_{n-1}^{+}, \nabla v_{n-1}^{+})_{L^2}\\
&= \int_{t_{n-1}}^{t_n} (f,\dot{v})_{L^2} \,\dd t +(\dot{u
}_{n-1}^{-}, \dot{v}_{n-1}^{+})_{L^2}+(\nabla u_{n-1}^{-}, \nabla v_{n-1}^{+})_{L^2}
\end{split}
\end{equation}
admits a unique solution $u\in\mathcal{V}_{kh}^{q_n}$ on $I_n$ provided that $u_{n-1}$ and $f\mid_{I_n}$ are given.
\begin{proof}
We first note that to show uniqueness, it suffices to see that the corresponding homogeneous equation 
\begin{equation}\label{homoeq}
\begin{split}
{}&\int_{t_{n-1}}^{t_n} \left\langle \ddot{u}, \dot{v}\right\rangle \,\dd t +\int_{t_{n-1}}^{t_n} (\dot{u},\dot{v})_{L^2} \,\dd t+\int_{t_{n-1}}^{t_n} (\nabla u,\nabla \dot{v})_{L^2}\,\dd t+(\dot{u}_{n-1}^{+}, \dot{v}_{n-1}^{+})_{L^2}+(\nabla u_{n-1}^{+}, \nabla v_{n-1}^{+})_{L^2}=0
\end{split}
\end{equation}
for all $v\in \mathcal{V}_{kh}^{q_n}$ only has the trivial solution $u\equiv 0.$ For this purpose, we assume that $u$ is a solution and choose $v=u$ on $I_n$, then we have 
\begin{equation}
\|\dot{u}_n^{-}\|_{L^2}^2-\|\dot{u}_{n-1}^{+}\|_{L^2}^2 +2\int_{t_{n-1}}^{t_n} \|\dot{u}\|_{L^2}^2 \,\dd t+\|\nabla u_n^{-}\|_{L^2}^2-\|\nabla u_{n-1}^{+}\|_{L^2}^2+ 2\|\dot{u}_{n-1}^{+}\|_{L^2}^2+2\|\nabla u_{n-1}^{+}\|_{L^2}^2=0.
\end{equation}
That is, 
\begin{equation}
\|\dot{u}_{n}^{-}\|_{L^2}^2+\|\dot{u}_{n-1}^{+}\|_{L^2}^2+2\int_{t_{n-1}}^{t_n} \|\dot{u}\|_{L^2}^2\,\dd t+\|\nabla u_n^{-}\|_{L^2}^2+\|\nabla u_{n-1}^{+}\|_{L^2}^2=0.
\end{equation} Note that $\|u_{n-1}^{+}\|_{H_0^1}=\|\nabla u_{n-1}^{+}\|_{L^2}=0$ implies that $u_{n-1}^{+}\equiv 0.$  Then we have
$$\begin{cases}
\dot{u}(t)=0 & \mbox{ for } t\in I_n,\\
u_{n-1}^{+}=0.
\end{cases}$$
This implies that $u(t)\equiv 0$ on $I_n.$ The existence of a solution to the local problem (\ref{localprob}) follows from the uniqueness since this is a finite dimensional problem.
\end{proof}
\end{proposition}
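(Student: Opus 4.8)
The plan is to exploit the fact that $\mathcal{V}_{kh}^{q_n}$ is finite-dimensional, so the local problem (\ref{localprob}) amounts to a square linear system; consequently existence and uniqueness are equivalent, and it suffices to prove that the associated homogeneous problem (\ref{homoeq}) admits only the trivial solution. First I would fix $u\in\mathcal{V}_{kh}^{q_n}$ solving (\ref{homoeq}) and test with $v=u$, which is a legitimate choice because the trial and test spaces coincide. This mirrors the global coercivity identity (\ref{Variational Form}), now localised to the single interval $I_n$.

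The key computation is an energy identity obtained by integrating by parts in time. Since $u\mid_{I_n}$ is a polynomial in $t$ with coefficients in the finite-dimensional space $\mathcal{V}_h\subset H_0^1(\Omega)$, it is smooth on $\overline{I_n}$ and the dual pairing $\langle\ddot u,\dot u\rangle$ reduces to the $L^2$ inner product. I would use the two identities
$$\int_{t_{n-1}}^{t_n}\langle\ddot u,\dot u\rangle\,\dd t=\tfrac12\bigl(\|\dot u_n^{-}\|_{L^2}^2-\|\dot u_{n-1}^{+}\|_{L^2}^2\bigr),\qquad \int_{t_{n-1}}^{t_n}(\nabla u,\nabla\dot u)_{L^2}\,\dd t=\tfrac12\bigl(\|\nabla u_n^{-}\|_{L^2}^2-\|\nabla u_{n-1}^{+}\|_{L^2}^2\bigr),$$
each following from the fundamental theorem of calculus applied to $\tfrac12\|\dot u\|_{L^2}^2$ and $\tfrac12\|\nabla u\|_{L^2}^2$ respectively.

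Substituting these into twice (\ref{homoeq}), the penalty contributions $2\|\dot u_{n-1}^{+}\|_{L^2}^2$ and $2\|\nabla u_{n-1}^{+}\|_{L^2}^2$ coming from the boundary terms absorb the negative contributions $-\|\dot u_{n-1}^{+}\|_{L^2}^2$ and $-\|\nabla u_{n-1}^{+}\|_{L^2}^2$ produced by the integration by parts, leaving a sum of non-negative quantities
$$\|\dot u_n^{-}\|_{L^2}^2+\|\dot u_{n-1}^{+}\|_{L^2}^2+2\int_{t_{n-1}}^{t_n}\|\dot u\|_{L^2}^2\,\dd t+\|\nabla u_n^{-}\|_{L^2}^2+\|\nabla u_{n-1}^{+}\|_{L^2}^2=0.$$
As every term is non-negative, each must vanish individually. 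In particular $\|\nabla u_{n-1}^{+}\|_{L^2}=0$ forces $u_{n-1}^{+}\equiv0$, since $\|\nabla\cdot\|_{L^2}$ is a norm on $H_0^1(\Omega)$ by the Poincar\'{e}--Friedrichs inequality, while $\int_{t_{n-1}}^{t_n}\|\dot u\|_{L^2}^2\,\dd t=0$ yields $\dot u\equiv0$ on $I_n$. Combined with $u_{n-1}^{+}=0$, these give $u\equiv0$ on $I_n$, establishing uniqueness and hence existence.

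I do not anticipate a genuine obstacle, as the argument is the discrete analogue of the coercivity identity restricted to one time slab. The only point requiring care is the bookkeeping of the jump and penalty terms: one must verify that the factor-of-two penalty terms exactly cancel the sign-indefinite boundary terms generated by integration by parts, thereby converting an a priori indefinite expression into a positive-definite one.
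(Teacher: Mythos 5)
Your proposal is correct and follows essentially the same route as the paper: test the homogeneous local problem with $v=u$, integrate the $\langle\ddot u,\dot u\rangle$ and $(\nabla u,\nabla\dot u)_{L^2}$ terms exactly to produce the energy identity in which the penalty terms absorb the negative boundary contributions, conclude $u\equiv 0$ from the resulting sum of non-negative terms, and deduce existence from uniqueness by finite-dimensionality. The only difference is presentational — you make the fundamental-theorem-of-calculus identities and the Poincar\'e--Friedrichs justification explicit, which the paper leaves implicit.
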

\subsection{Convergence analysis}
\begin{theorem}\label{convergence_thm}
Let $u$ be the solution to (\ref{setup})--(\ref{bc}) such that $u_{\mid I_n}\in H^{s_n}(I_n; H^{m+1}(\Omega))$ for any $n=1,\ldots, N$ with $s_n\geq 2$, and let $u_{\mathrm{DG}}\in \mathcal{V}_{kh}^{\mathbf{q}}$ be the discontinuous Galerkin approximation of $u$. That is,
$$\mathcal{A}(u_{\mathrm{DG}},v)=\tilde{\mathcal{F}}(v)\mbox{ for all } v\in\mathcal{V}_{kh}^{\mathbf{q}}.$$
Assume that $1\leq r\leq \min (p,m)$, $s_n\geq q_n+1$ for each $n=1,\ldots, N$; then we have
\begin{align}\label{error_in_energy}
|||u-u_{\mathrm{DG}}||| &\leq C\bigg\{\sum_{n=1}^N \frac{k_n^{2q_n-1}}{q_n^{2s_n-6}} \|u\|_{H^{s_n}(I_n; H^{r+1}(\Omega))}^2+ h^{2r+2}\|u\|_{H^{s_n}(0,T; H^{r+1}(\Omega))}^2\bigg\}^{\frac{1}{2}} \nonumber\\
&\quad+ \bigg\{h^{2r+2} \left(\|\dot{u}(t_0^{+})\|_{H^{r+1}}^2+\|\dot{u}(t_N^{-})\|_{H^{r+1}}^2\right) + h^{2r} \left( \| u(t_0^{+})\|_{H^{r+1}}^2 +\| u(t_N^{-})\|_{H^{r+1}}^2\right)\bigg\}^{\frac{1}{2}},
\end{align}
where $C$ is a positive constant, which may vary from line to line.
\end{theorem}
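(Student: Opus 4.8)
The plan is to combine a space--time projection with the coercivity $\mathcal{A}(v,v)=|||v|||^2$ and a perturbed Galerkin orthogonality. Set $W:=\Pi_I^q\mathcal{P}_h u$, which lies in $\mathcal{V}_{kh}^{\mathbf{q}}$ because the spatial Ritz projection and the temporal projection act on different variables and hence commute, $\Pi_I^q\mathcal{P}_h=\mathcal{P}_h\Pi_I^q$. I decompose the error as $e:=u-u_{\mathrm{DG}}=e^\pi+e^h$ with $e^\pi:=u-W$ and $e^h:=W-u_{\mathrm{DG}}\in\mathcal{V}_{kh}^{\mathbf{q}}$, and I further split the projection error as $e^\pi=\rho+\sigma$, where $\rho:=u-\mathcal{P}_h u$ is the (time-continuous) spatial error and $\sigma:=(I-\Pi_I^q)\mathcal{P}_h u=\mathcal{P}_h(u-\Pi_I^q u)$ is the temporal error of the spatially projected solution.

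For $|||e^\pi|||$ I would bound $|||\rho|||$ and $|||\sigma|||$ separately. Since $\rho$ is continuous across the time nodes, all jump contributions to $|||\rho|||$ vanish, and the remaining terms ($\int\|\dot\rho\|_{L^2}^2$, together with $\|\dot\rho(t_0^+)\|$, $\|\dot\rho(t_N^-)\|$, $\|\nabla\rho(t_0^+)\|$, $\|\nabla\rho(t_N^-)\|$) are controlled by the standard Ritz estimates $\|(I-\mathcal{P}_h)w\|_{L^2}\le Ch^{r+1}\|w\|_{H^{r+1}}$ and $\|(I-\mathcal{P}_h)w\|_{H^1}\le Ch^{r}\|w\|_{H^{r+1}}$, using that $\mathcal{P}_h$ commutes with $\partial_t$; these produce exactly the $h^{2r+2}$ and $h^{2r}$ terms of the estimate. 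For $|||\sigma|||$, the projection $\Pi_I^q$ makes $\sigma$ and $\partial_t\sigma$ vanish at the appropriate nodes, so $|||\sigma|||$ collapses as in the semi-discrete proof and is bounded by Lemma \ref{projlem} and Remark \ref{remarkonspace} applied to $\mathcal{P}_h u$, together with the stability bound $\|\mathcal{P}_h u\|_{H^{s_n}(I_n;L^2)}\le C\|u\|_{H^{s_n}(I_n;H^{r+1})}$; this yields the temporal term $k_n^{2q_n-1}q_n^{-(2s_n-6)}$ (here $\mu_n=q_n+1$ under $s_n\ge q_n+1$).

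It remains to estimate $e^h$ by coercivity: $|||e^h|||^2=\mathcal{A}(e^h,e^h)=-\mathcal{A}(e^\pi,e^h)+\mathcal{A}(e,e^h)$. The consistency $\mathcal{A}(u,e^h)=\mathcal{F}(e^h)$ (the exact solution has no time jumps) and $\mathcal{A}(u_{\mathrm{DG}},e^h)=\tilde{\mathcal{F}}(e^h)$ give $\mathcal{A}(e,e^h)=\mathcal{F}(e^h)-\tilde{\mathcal{F}}(e^h)$; the $u_0$-contribution cancels by the defining property (\ref{Ritz}) of the Ritz projection, leaving only the residual $(u_1-\mathcal{P}_h u_1,\dot e^h(t_0^+))_{L^2}$, which is $\le Ch^{r+1}\|\dot u(t_0^+)\|_{H^{r+1}}|||e^h|||$. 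For $-\mathcal{A}(\rho,e^h)$ the Ritz orthogonality (\ref{Ritz}) annihilates every $(\nabla\cdot,\nabla\cdot)$-pairing (the $\int(\nabla\rho,\nabla\dot e^h)$ term, the node term $(\nabla\rho(t_0^+),\nabla e^h(t_0^+))$, and, by time-continuity, all jump terms), because $\dot e^h(\cdot,t)$ and $e^h(t_0^+)$ lie in $\mathcal{V}_h$; what survives are $L^2$ pairings with $\ddot\rho=(I-\mathcal{P}_h)\ddot u$ and $\dot\rho=(I-\mathcal{P}_h)\dot u$, which need no inverse inequality and are bounded by Ritz estimates (fitting the $h^{2r+2}$ terms).

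The main obstacle is the temporal-error term $-\mathcal{A}(\sigma,e^h)$, whose reduction à la the semi-discrete proof produces $\sum_n\int_{I_n}(\nabla\sigma,\nabla\dot e^h)_{L^2}\,\dd t$; here one cannot integrate by parts in space directly because $\mathcal{P}_h u\notin H^2(\Omega)$. The key trick I would use is $\sigma=\mathcal{P}_h(u-\Pi_I^q u)$ together with (\ref{Ritz}): since $\dot e^h(\cdot,t)\in\mathcal{V}_h$, one has $(\nabla\sigma,\nabla\dot e^h)_{L^2}=(\nabla(u-\Pi_I^q u),\nabla\dot e^h)_{L^2}$, and now $u-\Pi_I^q u\in H^2(\Omega)$, so integration by parts in space is legitimate and gives $-\sum_n\int_{I_n}((I-\Pi_I^q)\Delta u,\dot e^h)_{L^2}\,\dd t$, controlled by the temporal projection estimate for $\Delta u$. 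After bounding the surviving $\dot\sigma$ term likewise, a Cauchy--Schwarz/Young argument lets me absorb all $\tfrac14\|\dot e^h\|_{L^2(I_n;L^2)}^2$ and $\|\dot e^h(t_0^+)\|_{L^2}^2\le 2|||e^h|||^2$ contributions into the left-hand side, solve for $|||e^h|||$, and combine with $|||e^\pi|||$ via the triangle inequality to obtain (\ref{error_in_energy}); the hypothesis $s_n\ge q_n+1$ is used only to replace $\mu_n$ by $q_n+1$ in the exponents.
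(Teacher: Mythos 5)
Your proposal is correct and follows essentially the same route as the paper's proof: the same three-term splitting (your $e^h$, $\sigma$, $\rho$ are, up to sign, the paper's $\theta$, $\rho_1$, $\rho_2$), the same use of coercivity, Ritz orthogonality, the temporal projection properties at the nodes, and absorption of the $\|\dot e^h\|_{L^2}$ contributions into the left-hand side. The one place where you refine the argument is the gradient pairing with the temporal projection error: the paper integrates by parts in space directly and writes $\|\Delta \rho_1\|_{L^2}$ for $\rho_1=(\Pi_{I}^{q}-I)\mathcal{P}_h u$, which is only formal since $\mathcal{P}_h u\notin H^2(\Omega)$, whereas your step of first invoking (\ref{Ritz}) to replace $\bigl(\nabla\mathcal{P}_h(u-\Pi_{I}^{q}u),\nabla\dot e^h\bigr)_{L^2}$ by $\bigl(\nabla(u-\Pi_{I}^{q}u),\nabla\dot e^h\bigr)_{L^2}$ (legitimate because $\dot e^h(\cdot,t)\in\mathcal{V}_h$) and only then integrating by parts makes that manipulation rigorous.
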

\begin{proof}
Let $\Pi_{I_n}^{q_n}$ denote the modified $L^2$-projector defined by Definition \ref{proj} on $I_n$ for each $n=1,\ldots,N$. We decompose the error as 
\begin{align*}
u_{\mathrm{DG}}(t)-u(t) &=  \left(u_{\mathrm{DG}}(t)-\Pi_{I_n}^{q_n}\mathcal{P}_hu(t)\right)+\left(\Pi_{I_n}^{q_n}\mathcal{P}_hu(t)-\mathcal{P}_h u(t)\right)+\left(\mathcal{P}_hu(t)-u(t)\right)\\
&=: \theta(t)+\rho_1(t)+\rho_2(t),
\end{align*}
for $t\in I_n$, with $n=1,\ldots,N.$ For simplicity of notation, we define 
$(\Pi_{I_n}^{q_n} w)^{(j)}:=\frac{\dd^{j}\Pi_{I_n}^{q_n} w}{\dd t^j}$ and $u^{(j)}:=\frac{\dd^j u}{\dd t^j}$, for $j=0, 1$ and $2$.  By Definition \ref{proj}, Lemma \ref{projlemma} and Corollary \ref{cor1}, we have 
\begin{align}
\int_{t_{n-1}}^{t_n} \|\rho_1^{(j)}(t)\|_{L^2}^2 \:\dd t = & \int_{t_{n-1}}^{t_n} \| (\Pi_{I_n}^{q_n} \mathcal{P}_h u)^{(j)}(t)-(\mathcal{P}_h u)^{(j)}(t) \|_{L^2}^2 \:\dd t\nonumber\\
{}\leq &\:C\frac{k_n^{2(\mu-j)}}{q_n^{2(s_n-1)}} \|\mathcal{P}_h u\|_{H^{s_n}(I_n; L^2(\Omega))}^2 \nonumber\\
{}\leq &\:C\frac{k_n^{2(\mu-j)}}{q_n^{2(s_n-1)}} \| u\|_{H^{s_n}(I_n; L^2(\Omega))}^2 \quad\text{ for } j=0,1,
\end{align}
and
\begin{align}
\int_{t_{n-1}}^{t_n} \|\rho_1^{(2)}(t)\|_{L^2}^2 \:\dd t = & \int_{t_{n-1}}^{t_n} \| (\Pi_{I_n}^{q_n  } \mathcal{P}_h u)^{(2)}(t)-(\mathcal{P}_h u)^{(2)}(t) \|_{L^2}^2 \dd t \leq \: C \frac{k_n^{2(\mu-2)}}{q_n^{2(s_n-3)}}  \| u\|_{H^{s_n}(I_n; L^2(\Omega))}^2,
\end{align}
where $\mu=\min(s_n,q_n+1)$ and $C$ is a universal constant, which may vary from line to line. If we assume that the solution $u$ of (\ref{setup})--(\ref{bc}) is sufficiently smooth (i.e. $s_n>q_n+1$), then we can write
\begin{equation}\label{rho1}
\int_{t_{n-1}}^{t_n} \|\rho_1^{(j)}(t)\|_{L^2}^2 \:\dd t\leq C  \frac{k_n^{2(q_n+1-j)}}{q_n^{2(s_n-1)}} \| u\|_{H^{s_n}(I_n; L^2(\Omega))}^2,
\end{equation}
for $j=0,1$, and 
\begin{equation}
\int_{t_{n-1}}^{t_n} \|\rho_1^{(2)}(t)\|_{L^2}^2 \:\dd t  \leq \: C \frac{k_n^{2q_n-2}}{q_n^{2(s_n-3)}} \| u\|_{H^{s_n}(I_n; L^2(\Omega))}^2.
\end{equation}
By C\'{e}a's lemma and the interpolation property (\ref{inte_prop}), we know that
\begin{equation}
\|\nabla(\mathcal{P}_h u-u)\|_{L^2}\leq \min_{v\in \mathcal{V}_h}\|\nabla u-\nabla v\|_{L^2}\leq C h^{r}\|u\|_{H^{r+1}}.
\end{equation}
Applying a duality argument (cf. Theorem 5.4.8 in \cite{BS}), we have
\begin{equation}
\|\mathcal{P}_h u-u\|_{L^2}\leq h\|\nabla (\mathcal{P}_h u-u)\|_{L^2}\leq C h^{r+1}\|u\|_{H^{r+1}}.
\end{equation}
Thus, we can approximate the terms involving $\rho_2$ by the following:
\begin{align}
\int_{t_{n-1}}^{t_n} \|\rho_2^{(j)}(t)\|_{L^2}^2 \:\dd t & = \int_{t_{n-1}}^{t_n}\| \mathcal{P}_h u^{(j)}(t)-u^{(j)}(t)\|_{L^2}^2 \:\dd t \nonumber\\
&\leq C \int_{t_{n-1}}^{t_n} h^{2r+2} \|u^{j}(t)\|_{H^{r+1}}^2 \:\dd t \nonumber\\
&= C h^{2r+2} \|u\|_{H^j(I_n;H^{r+1}(\Omega))}^2 \quad\text{ for } j=0,1,2.
\end{align}
Recall that the fully discrete scheme is 
\begin{flalign}\label{discrete_eq}
&\sum_{n=1}^N\int_{t_{n-1}}^{t_n}\: \left\langle\ddot{u}_{\mathrm{DG}}(t),\dot{v}(t)\right\rangle \,\dd t +\sum_{n=1}^N\int_{t_{n-1}}^{t_n}\:\left(\dot{u}_{\mathrm{DG}}(t),\dot{v}(t)\right)_{L^2}\, \dd t+\sum_{n=1}^N\int_{t_{n-1}}^{t_n}\: \left(\nabla u_{\mathrm{DG}}(t), \nabla \dot{v}(t)\right)_{L^2} \dd t\nonumber &&\\
&\quad+\sum_{n=1}^{N-1}([\dot{u}_{\mathrm{DG}}]_{n}, \dot{v}(t_n^{+}))_{L^2}+\sum_{n=1}^{N-1}([\nabla u_{\mathrm{DG}}]_{n}, \nabla v(t_n^{+}))_{L^2} +(\dot{u}_{\mathrm{DG}}(t_0^{+}), \dot{v}(t_0^{+}))_{L^2}+(\nabla u_{\mathrm{DG}}(t_0^{+}),\nabla v(t_0^{+}))_{L^2}\nonumber &&\\
&= \sum_{n=1}^N\int_{t_{n-1}}^{t_n}\left(f(t),\dot{v}(t)\right)_{L^2} \dd t+\left(\mathcal{P}_h u_1, \dot{v}(t_0^{+})\right)_{L^2}+\left(\nabla \mathcal{P}_h u_0, \nabla v(t_0^{+})\right)_{L^2}&&
\end{flalign}
for all $v\in \mathcal{V}_{kh}^{\mathbf{q}}.$ The variational form of the original problem is written as 
\begin{flalign}\label{variational_eq}
&\sum_{n=1}^N\int_{t_{n-1}}^{t_n}\: \left\langle\ddot{u}(t),\dot{v}(t)\right\rangle \,\dd t +\sum_{n=1}^N\int_{t_{n-1}}^{t_n}\:\left(\dot{u}(t),\dot{v}(t)\right)_{L^2}\, \dd t+\sum_{n=1}^N\int_{t_{n-1}}^{t_n}\: \left(\nabla u(t), \nabla \dot{v}(t)\right)_{L^2} \dd t\nonumber&&\\
&\quad+\sum_{n=1}^{N-1}([\dot{u}]_{n}, \dot{v}(t_n^{+}))_{L^2}+\sum_{n=1}^{N-1}([\nabla u]_{n}, \nabla v(t_n^{+}))_{L^2} +(\dot{u}(t_0^{+}), \dot{v}(t_0^{+}))_{L^2}+(\nabla u(t_0^{+}),\nabla v(t_0^{+}))_{L^2}\nonumber &&\\
&= \sum_{n=1}^N\int_{t_{n-1}}^{t_n}\left(f(t),\dot{v}(t)\right)_{L^2} \dd t+\left( u_1, \dot{v}(t_0^{+})\right)_{L^2}+\left(\nabla u_0, \nabla v(t_0^{+})\right)_{L^2}&&
\end{flalign}
for all $v\in \mathcal{V}_{kh}^{\mathbf{q}}.$
Subtracting (\ref{variational_eq}) from (\ref{discrete_eq}), we have 
\begin{flalign}\label{error_eq}
&\sum_{n=1}^N\int_{t_{n-1}}^{t_n}\: \left\langle\ddot{\theta}(t)+\ddot{\rho}_1(t)+\ddot{\rho}_2(t),\dot{v}(t)\right\rangle \,\dd t +\sum_{n=1}^N\int_{t_{n-1}}^{t_n}\:\left(\dot{\theta}(t)+\dot{\rho}_1(t)+\dot{\rho}_2(t),\dot{v}(t)\right)_{L^2}\, \dd t \nonumber &&\\
&\quad+\sum_{n=1}^N\int_{t_{n-1}}^{t_n}\: \left(\nabla \theta(t) +\nabla \rho_1(t)+\nabla \rho_2(t), \nabla \dot{v}(t)\right)_{L^2} \dd t +\sum_{n=1}^{N-1}([\dot{\theta}+\dot{\rho}_1+\dot{\rho}_2]_{n}, \dot{v}(t_n^{+}))_{L^2} \nonumber &&\\
&\quad+\sum_{n=1}^{N-1}( [\nabla\theta +\nabla \rho_1+\nabla \rho_2]_{n}, \nabla v(t_{n}^{+}))_{L^2}+(\dot{u}_{\mathrm{DG}}(t_0^{+})-\dot{u}(t_0^{+}), \dot{v}(t_0^{+}))_{L^2}\nonumber &&\\
&\quad+(\nabla u_{\mathrm{DG}}(t_0^{+})-\nabla u(t_0^{+}),\nabla v(t_0^{+}))_{L^2}\nonumber &&\\
&= \left( \mathcal{P}_h u_1- u_1, \dot{v}(t_0^{+})\right)_{L^2}+\left(\nabla\mathcal{P}_h u_0-\nabla u_0, \nabla v(t_0^{+})\right)_{L^2}.&&
\end{flalign}
By taking $v=\theta$ in (\ref{error_eq}), we have 

\begin{flalign}\label{theta_err1}
&\sum_{n=1}^N\int_{t_{n-1}}^{t_n}\: \left\langle\ddot{\theta}(t),\dot{\theta}(t)\right\rangle \,\dd t +\sum_{n=1}^N\int_{t_{n-1}}^{t_n}\:\left(\dot{\theta}(t),\dot{\theta}(t)\right)_{L^2}\, \dd t+\sum_{n=1}^N\int_{t_{n-1}}^{t_n}\: \left(\nabla \theta(t), \nabla \dot{\theta}(t)\right)_{L^2} \dd t \nonumber &&\\
&\quad+\sum_{n=1}^{N-1}([\dot{\theta}]_{n}, \dot{\theta}(t_n^{+}))_{L^2}+\sum_{n=1}^{N-1}(\nabla [\theta]_{n}, \nabla \theta(t_{n}^{+}))_{L^2}+(\dot{\theta}(t_0^{+}), \dot{\theta}(t_0^{+}))_{L^2}+(\nabla\theta(t_0^{+}),\nabla \theta(t_0^{+}))_{L^2}\nonumber &&\\
&= -\sum_{n=1}^N\int_{t_{n-1}}^{t_n}\: \left\langle\ddot{\rho}_1(t),\dot{\theta}(t)\right\rangle \,\dd t-\sum_{n=1}^N\int_{t_{n-1}}^{t_n}\:\left(\dot{\rho}_1(t),\dot{\theta}(t)\right)_{L^2}\, \dd t-\sum_{n=1}^N\int_{t_{n-1}}^{t_n}\: \left(\nabla \rho_1(t), \nabla \dot{\theta}(t)\right)_{L^2} \dd t\nonumber &&\\
&\quad-\sum_{n=1}^{N-1}\left([\dot \rho_1]_n, \dot\theta(t_n^{+})\right)_{L^2}-\sum_{n=1}^{N-1}\left([\nabla\rho_1]_n,\nabla\theta(t_n^{+})\right)_{L^2}-\left(\dot{\rho_1}(t_0^{+}), \dot{\theta}(t_0^{+})\right)_{L^2}-\left(\nabla\rho_1(t_0^{+}), \nabla\theta(t_0^{+})\right)_{L^2}\nonumber &&\\
&\quad -\sum_{n=1}^N\int_{t_{n-1}}^{t_n}\: \left\langle\ddot{\rho}_2(t),\dot{\theta}(t)\right\rangle \,\dd t-\sum_{n=1}^N\int_{t_{n-1}}^{t_n}\:\left(\dot{\rho}_2(t),\dot{\theta}(t)\right)_{L^2}\, \dd t-\sum_{n=1}^N\int_{t_{n-1}}^{t_n}\: \left(\nabla \rho_2(t), \nabla \dot{\theta}(t)\right)_{L^2} \dd t\nonumber &&\\
&\quad-\sum_{n=1}^{N-1}\left([\dot \rho_2]_n, \dot\theta(t_n^{+})\right)_{L^2}-\sum_{n=1}^{N-1}\left([\nabla\rho_2]_n,\nabla\theta(t_n^{+})\right)_{L^2}-\left(\dot{\rho_2}(t_0^{+}), \dot{\theta}(t_0^{+})\right)_{L^2}-\left(\nabla\rho_2(t_0^{+}), \nabla\theta(t_0^{+})\right)_{L^2}\nonumber &&\\
& \quad+\left( \mathcal{P}_h u_1- u_1, \dot{\theta}(t_0^{+})\right)_{L^2}+\left(\nabla\mathcal{P}_h u_0-\nabla u_0, \nabla \theta(t_0^{+})\right)_{L^2}.&&
\end{flalign}
Since $\rho_2(t):=\mathcal{P}_hu(t)-u(t)$,  both $\rho_2(t)$ and $\dot{\rho_2}(t)$ are continuous in $t$. By the property of the Ritz operator, we also have that 
$$\left(\nabla \rho_2(t), \nabla \dot{\theta}(t)\right)_{L^2}=0 \quad \mbox{ for all } t\in I_n, \: n=1,\ldots,N.$$ 
Thus, we can write (\ref{theta_err1}) as follows:
\begin{flalign}\label{theta_err2}
&\sum_{n=1}^N\int_{t_{n-1}}^{t_n}\: \left\langle\ddot{\theta}(t),\dot{\theta}(t)\right\rangle \,\dd t +\sum_{n=1}^N\int_{t_{n-1}}^{t_n}\:\left(\dot{\theta}(t),\dot{\theta}(t)\right)_{L^2}\, \dd t+\sum_{n=1}^N\int_{t_{n-1}}^{t_n}\: \left(\nabla \theta(t), \nabla \dot{\theta}(t)\right)_{L^2} \dd t \nonumber &&\\
&\quad+\sum_{n=1}^{N-1}([\dot{\theta}]_{n}, \dot{\theta}(t_n^{+}))_{L^2}+\sum_{n=1}^{N-1}(\nabla [\theta]_{n}, \nabla \theta(t_{n}^{+}))_{L^2}+(\dot{\theta}(t_0^{+}), \dot{\theta}(t_0^{+}))_{L^2}+(\nabla\theta(t_0^{+}),\nabla \theta(t_0^{+}))_{L^2}\nonumber &&\\
&= -\sum_{n=1}^N\int_{t_{n-1}}^{t_n}\: \left\langle\ddot{\rho}_1(t),\dot{\theta}(t)\right\rangle \,\dd t-\sum_{n=1}^N\int_{t_{n-1}}^{t_n}\:\left(\dot{\rho}_1(t),\dot{\theta}(t)\right)_{L^2}\, \dd t-\sum_{n=1}^N\int_{t_{n-1}}^{t_n}\: \left(\nabla \rho_1(t), \nabla \dot{\theta}(t)\right)_{L^2} \dd t\nonumber &&\\
&\quad-\sum_{n=1}^{N-1}\left([\dot \rho_1]_n, \dot\theta(t_n^{+})\right)_{L^2}-\sum_{n=1}^{N-1}\left([\nabla\rho_1]_n,\nabla\theta(t_n^{+})\right)_{L^2}-\left(\dot{\rho_1}(t_0^{+}), \dot{\theta}(t_0^{+})\right)_{L^2}-\left(\nabla\rho_1(t_0^{+}), \nabla\theta(t_0^{+})\right)_{L^2}\nonumber &&\\
&\quad -\sum_{n=1}^N\int_{t_{n-1}}^{t_n}\: \left\langle\ddot{\rho}_2(t),\dot{\theta}(t)\right\rangle \,\dd t-\sum_{n=1}^N\int_{t_{n-1}}^{t_n}\:\left(\dot{\rho}_2(t),\dot{\theta}(t)\right)_{L^2}\, \dd t. &&
\end{flalign}
Note that 
\begin{flalign}\label{integrationbyparts}
\int_{t_{n-1}}^{t_n} \left\langle\ddot \rho_1(t), \dot\theta(t)\right\rangle \:\dd t = & \int_{t_{n-1}}^{t_n} \frac{\dd }{\dd t} \left(\dot \rho_1(t), \dot\theta(t)\right) \:\dd t -\int_{t_{n-1}}^{t_n} \left(\dot{\rho}_1(t), \ddot{\theta}(t)\right)_{L^2} \:\dd t \nonumber && \\
=& \:\left(\dot{\rho_1}(t_n^{-}),\dot{\theta}(t_n^{-})\right)_{L^2}-\left(\dot{\rho_1}(t_{n-1}^{+}), \dot{\theta}(t_{n-1}^{+})\right)_{L^2},&&
\end{flalign}
where we have used (d) of Lemma \ref{projproperties}. Substituting (\ref{integrationbyparts}) into (\ref{theta_err2}) and rearranging the addends yield

\begin{flalign}\label{theta_err3}
&\sum_{n=1}^N\int_{t_{n-1}}^{t_n}\: \left\langle\ddot{\theta}(t),\dot{\theta}(t)\right\rangle \,\dd t +\sum_{n=1}^N\int_{t_{n-1}}^{t_n}\:\left(\dot{\theta}(t),\dot{\theta}(t)\right)_{L^2}\, \dd t+\sum_{n=1}^N\int_{t_{n-1}}^{t_n}\: \left(\nabla \theta(t), \nabla \dot{\theta}(t)\right)_{L^2} \dd t \nonumber &&\\
&\quad+\sum_{n=1}^{N-1}([\dot{\theta}]_{n}, \dot{\theta}(t_n^{+}))_{L^2}+\sum_{n=1}^{N-1}(\nabla [\theta]_{n}, \nabla \theta(t_{n}^{+}))_{L^2}+(\dot{\theta}(t_0^{+}), \dot{\theta}(t_0^{+}))_{L^2}+(\nabla\theta(t_0^{+}),\nabla \theta(t_0^{+}))_{L^2}\nonumber &&\\
&= -\sum_{n=1}^N\int_{t_{n-1}}^{t_n}\:\left(\dot{\rho}_1(t),\dot{\theta}(t)\right)_{L^2}\, \dd t-\sum_{n=1}^N\int_{t_{n-1}}^{t_n}\: \left(\nabla \rho_1(t), \nabla \dot{\theta}(t)\right)_{L^2} \dd t\nonumber &&\\
&\quad-\sum_{n=1}^{N-1}\left(\dot \rho_1(t_n^{-}), [\dot\theta]_n\right)_{L^2}-\sum_{n=1}^{N-1}\left([\nabla\rho_1]_n,\nabla\theta(t_n^{+})\right)_{L^2}-\left(\dot{\rho_1}(t_N^{-}), \dot{\theta}(t_N^{-})\right)_{L^2}-\left(\nabla\rho_1(t_0^{+}), \nabla\theta(t_0^{+})\right)_{L^2}\nonumber &&\\
&\quad -\sum_{n=1}^N\int_{t_{n-1}}^{t_n}\: \left\langle\ddot{\rho}_2(t),\dot{\theta}(t)\right\rangle \,\dd t-\sum_{n=1}^N\int_{t_{n-1}}^{t_n}\:\left(\dot{\rho}_2(t),\dot{\theta}(t)\right)_{L^2}\, \dd t. 
\end{flalign}
By (a)--(c) in Lemma \ref{projproperties}, we have $\rho_1(t_n^{\pm})=0$ and $\dot\rho_1(t_n^{-})=0$ for $n=1,\ldots,N$. Thus,
\begin{flalign}\label{theta_err4}
&\frac{1}{2}\|\dot{\theta}(t_0^{+})\|_{L^2}^2+\frac{1}{2}\sum_{n=1}^{N-1}\|[\dot{\theta}]_n\|_{L^2}^2 +\frac{1}{2}\|\dot{\theta}(t_N^{-})\|_{L^2}^2+\frac{1}{2}\|\nabla\theta(t_0^{+})\|_{L^2}^2+\frac{1}{2}\sum_{n=1}^{N-1}\|[\nabla\theta]_n\|_{L^2}^2 \nonumber&&\\
&\quad+\frac{1}{2}\|\nabla\theta(t_N^{-})\|_{L^2}^2  + \sum_{n=1}^N \int_{t_{n-1}}^{t_n}\|\dot \theta(t)\|_{L^2}^2\: \dd t\nonumber\\
&= -\sum_{n=1}^N\int_{t_{n-1}}^{t_n}\:\left(\dot{\rho}_1(t),\dot{\theta}(t)\right)_{L^2}\, \dd t-\sum_{n=1}^N\int_{t_{n-1}}^{t_n}\: \left(\nabla \rho_1(t), \nabla \dot{\theta}(t)\right)_{L^2} \dd t\nonumber &&\\
&\quad -\sum_{n=1}^N\int_{t_{n-1}}^{t_n}\: \left\langle\ddot{\rho}_2(t),\dot{\theta}(t)\right\rangle \,\dd t-\sum_{n=1}^N\int_{t_{n-1}}^{t_n}\:\left(\dot{\rho}_2(t),\dot{\theta}(t)\right)_{L^2}\, \dd t\nonumber &&\\
&\leq 2\sum_{n=1}^N \int_{t_{n-1}}^{t_n} \|\dot \rho_1(t)\|_{L^2}^2\: \dd t+2\sum_{n=1}^N \int_{t_{n-1}}^{t_n} \|\ddot \rho_2(t)\|_{L^2}^2\, \dd t +2\sum_{n=1}^N \int_{t_{n-1}}^{t_n} \|\dot \rho_2(t)\|_{L^2}^2\: \dd t \nonumber &&\\
&\quad+3\times\frac{1}{8}\sum_{n=1}^N \int_{t_{n-1}}^{t_n} \|\dot{\theta}(t)\|_{L^2}^2 \dd t +\sum_{n=1}^N \int_{t_{n-1}}^{t_n} \|\Delta \rho_1(t)\|_{L^2} \|\dot{\theta}(t)\|_{L^2}\ \,\dd t \nonumber &&\\
&\leq 2\sum_{n=1}^N \int_{t_{n-1}}^{t_n} \|\dot \rho_1(t)\|_{L^2}^2\: \dd t +2\sum_{n=1}^N \int_{t_{n-1}}^{t_n} \|\dot \rho_2(t)\|_{L^2}^2\: \dd t +2\sum_{n=1}^N \int_{t_{n-1}}^{t_n} \|\ddot \rho_2(t)\|_{L^2}^2\, \dd t \nonumber &&\\
& \quad+2\sum_{n=1}^N \int_{t_{n-1}}^{t_n} \|\Delta \rho_1(t)\|_{L^2}^2 \,\dd t +\frac{1}{2}\sum_{n=1}^N \int_{t_{n-1}}^{t_n}\|\dot{\theta}(t)\|_{L^2}^2 \,\dd t&&\nonumber\\
&\leq  2C\sum_{n=1}^N \frac{k_n^{2q_n}}{q_n^{2(s_n-1)}}\|u\|_{H^{s_n}(I_n; L^2(\Omega))}^2+2C\sum_{n=1}^N  h^{2r+2}\|u\|_{H^1(I_n;H^{r+1}(\Omega))}^2+2C\sum_{n=1}^N  h^{2r+2}\|u\|_{H^2(I_n;H^{r+1}(\Omega))}^2 \nonumber &&\\
&\quad+2C\sum_{n=1}^N \frac{k_n^{2q_n+2}}{q_n^{2(s_n-1)}} \|u\|_{H^{s_n}(I_n; H^2(\Omega))}^2+\frac{1}{2}\sum_{n=1}^N \int_{t_{n-1}}^{t_n}\|\dot{\theta}(t)\|_{L^2}^2\ \,\dd t\nonumber&&\\
&\leq C \sum_{n=1}^N \left(\frac{k_n^{2q_n}}{q_n^{2(s_n-1)}} +h^{2r+2}\right)\|u\|_{H^{s_n}(I_n; H^{r+1}(\Omega))}^2 +\frac{1}{2}\sum_{n=1}^N \int_{t_{n-1}}^{t_n}\|\dot{\theta}(t)\|_{L^2}^2\ \,\dd t.&&
\end{flalign}
Since $$\frac{1}{2}\sum_{n=1}^N \int_{t_{n-1}}^{t_n}\|\dot{\theta}(t)\|_{L^2}^2 \,\dd t\leq \frac{1}{2}|||\theta|||^2,$$ we can absorb the last term on the right-hand side of (\ref{theta_err4}) into the left-hand side to obtain that 
\begin{equation}\label{theta_error}
|||\theta|||^2  \leq 2C  \sum_{n=1}^N \frac{k_n^{2q_n}}{q_n^{2s_n-2}}\|u\|_{H^{s_n}(I_n; H^{r+1}(\Omega))}^2+2Ch^{2r+2}\|u\|_{H^{s_n}(0,T; H^{r+1}(\Omega))}^2. 
\end{equation}
Note that 
\begin{flalign*}
|||\rho_2|||^2&=  \frac{1}{2}\|\dot{\rho_2}(t_0^{+})\|_{L^2}^2+\frac{1}{2}\sum_{n=1}^{N-1}\|[\dot{\rho}_2]_n\|_{L^2}^2 +\frac{1}{2}\|\dot{\rho}_2(t_N^{-})\|_{L^2}^2+\frac{1}{2}\|\nabla\rho_2(t_0^{+})\|_{L^2}^2+\frac{1}{2}\sum_{n=1}^{N-1}\|[\nabla\rho_2]_n\|_{L^2}^2 \nonumber &&\\
&\quad+\frac{1}{2}\|\nabla\rho_2(t_N^{-})\|_{L^2}^2  + \sum_{n=1}^N \int_{t_{n-1}}^{t_n}\|\dot \rho_2(t)\|_{L^2}^2\: \dd t\nonumber &&\\
&=  \frac{1}{2}\|\dot{\rho_2}(t_0^{+})\|_{L^2}^2+\frac{1}{2}\|\dot{\rho}_2(t_N^{-})\|_{L^2}^2+\frac{1}{2}\|\nabla\rho_2(t_0^{+})\|_{L^2}^2+\frac{1}{2}\|\nabla\rho_2(t_N^{-})\|_{L^2}^2  + \sum_{n=1}^N \int_{t_{n-1}}^{t_n}\|\dot \rho_2(t)\|_{L^2}^2\: \dd t\nonumber &&\\
&\leq C h^{2r+2} \left(\|\dot{u}(t_0^{+})\|_{H^{r+1}}^2+\|\dot{u}(t_N^{-})\|_{H^{r+1}}^2\right) +C h^{2r} \left( \| u(t_0^{+})\|_{H^{r+1}}^2 +\| u(t_N^{-})\|_{H^{r+1}}^2\right)&&\\
&\quad+Ch^{2r+2}\|u\|_{H^1(0,T; H^{r+1}(\Omega))}^2. &&\\
\end{flalign*}
We also have that 
\begin{flalign*}
|||\rho_1|||^2&=  \frac{1}{2}\|\dot{\rho_1}(t_0^{+})\|_{L^2}^2+\frac{1}{2}\sum_{n=1}^{N-1}\|[\dot{\rho}_1]_n\|_{L^2}^2 +\frac{1}{2}\|\dot{\rho}_1(t_N^{-})\|_{L^2}^2+\frac{1}{2}\|\nabla\rho_1(t_0^{+})\|_{L^2}^2+\frac{1}{2}\sum_{n=1}^{N-1}\|[\nabla\rho_1]_n\|_{L^2}^2 \nonumber &&\\
&\quad+\frac{1}{2}\|\nabla\rho_1(t_N^{-})\|_{L^2}^2  + \sum_{n=1}^N \int_{t_{n-1}}^{t_n}\|\dot \rho_1(t)\|_{L^2}^2\: \dd t\nonumber\\
&=  \frac{1}{2}\sum_{n=1}^N \|\dot\rho_1(t_{n-1}^{+})\|_{L^2}^2+ \sum_{n=1}^N \int_{t_{n-1}}^{t_n}\|\dot \rho_1(t)\|_{L^2}^2\: \dd t\nonumber &&\\
&=  \frac{1}{2}\sum_{n=1}^N \|-\int_{t_{n-1}}^{t_n}\ddot\rho_1(s)\, \dd s\|_{L^2}^2+ \sum_{n=1}^N \int_{t_{n-1}}^{t_n}\|\dot \rho_1(t)\|_{L^2}^2\: \dd t\nonumber &&\\
&\leq \sum_{n=1}^N\frac{k_n}{2}\|\partial_{tt}(\Pi_{I_n}^{q_n}\mathcal{P}_h u-\mathcal{P}_hu)\|_{L^2(I_n; L^2(\Omega))}^2+\sum_{n=1}^N\|\partial_{t}(\Pi_{I_n}^{q_n}\mathcal{P}_h u-\mathcal{P}_hu)\|_{L^2(I_n; L^2(\Omega))}^2\\
&\leq  C\sum_{n=1}^N \frac{k_n^{2q_n-1}}{q_n^{2(s_n-3)}}\|\mathcal{P}_h u\|_{H^{s_n}(I_n; L^2(\Omega))}^2 +C\sum_{n=1}^N \frac{k_n^{2q_n}}{q_n^{2(s_n-1)}}\|\mathcal{P}_h u\|_{H^{s_n}(I_n; L^2(\Omega))}^2 \nonumber &&\\
&\leq C\sum_{n=1}^N \frac{k_n^{2q_n-1}}{q_n^{2s_n-6}}\|u\|_{H^{s_n}(I_n; L^2(\Omega))}^2.&&
\end{flalign*}
By the triangle's inequality, we deduce that 
\begin{align}\label{error_in_energy2}
|||u-u_{\mathrm{DG}}||| & \leq |||\theta|||+|||\rho_1|||+|||\rho_2||| \nonumber\\
&\leq C\bigg\{\sum_{n=1}^N \frac{k_n^{2q_n-1}}{q_n^{2s_n-6}} \|u\|_{H^{s_n}(I_n; H^{r+1}(\Omega))}^2+ h^{2r+2}\|u\|_{H^{s_n}(0,T; H^{r+1}(\Omega))}^2\bigg\}^{\frac{1}{2}} \nonumber\\
&\quad+ \bigg\{h^{2r+2} \left(\|\dot{u}(t_0^{+})\|_{H^{r+1}}^2+\|\dot{u}(t_N^{-})\|_{H^{r+1}}^2\right) + h^{2r} \left( \| u(t_0^{+})\|_{H^{r+1}}^2 +\| u(t_N^{-})\|_{H^{r+1}}^2\right)\bigg\}^{\frac{1}{2}}.
\end{align}
\end{proof}

\begin{remark}\label{uniform_grid2}
If we use uniform time intervals $k_n= k$, and uniform polynomial degrees $q_n=q\geq 2, s_n =s\geq 2$ for $n=1,\ldots, N$, then the error bound becomes
\begin{align*}
|||u-u_{\mathrm{DG}}||| &\leq C\bigg\{ h^{2r+2} \left(\|\dot{u}(t_0^{+})\|_{H^{r+1}}^2+\|\dot{u}(t_N^{-})\|_{H^{r+1}}^2\right) + h^{2r} \left( \|u(t_0^{+})\|_{H^{r+1}}^2 +\|u(t_N^{-})\|_{H^{r+1}}^2\right) \bigg\}^{\frac{1}{2}} \nonumber\\
&\quad +\bigg\{\frac{k^{2q-1}}{q^{2s-6}} \|u\|_{H^{s}(0,T; H^{r+1}(\Omega))}^2+ h^{2r+2}\|u\|_{H^{s}(0,T; H^{r+1}(\Omega))}^2 \bigg\}^{\frac{1}{2}}\nonumber\\
&\leq C(u)\left( h^{2r}+ \frac{k^{2q-1}}{q^{2s-6}} \right)^{\frac{1}{2}}, 
\end{align*}
where $C(u)$ is a constant depending on the exact solution $u$.
\end{remark}
\begin{remark}\label{l2_end}
If we only consider the $L^2$ error estimate at the end point, we have 
\begin{flalign*}
\|\dot{u}(t_N^{-})-\dot u_{\mathrm{DG}}(t_N^{-})\|_{L^2}&\leq  \|\dot\theta(t_N^{-})\|_{L^2}+\|\dot\rho_1(t_N^{-})\|_{L^2}+\|\dot\rho_2(t_N^{-})\|_{L^2}&&\\
&=  \|\dot\theta(t_N^{-})\|_{L^2}+\|\dot\rho_2(t_N^{-})\|_{L^2}&&\\
&\leq  C\left(h^{2(r+1)}+ \frac{k^{2q}}{q^{2s-2}}\right)^{\frac{1}{2}}\|u\|_{H^{s}(0,T; H^{r+1}(\Omega))}+Ch^{r+1}\|\dot{u}(t_N^{-})\|_{L^2} &&\\
& \leq C(u) \left(h^{2(r+1)}+ \frac{k^{2q}}{q^{2s-2}}\right)^{\frac{1}{2}}.&&
\end{flalign*}
Analogously,  
\begin{flalign*}
\|\dot{u}(t_N^{-})-\dot u_{\mathrm{DG}}(t_N^{-})\|_{L^2}+\| u(t_N^{-})- u_{\mathrm{DG}}(t_N^{-})\|_{L^2}\leq  &\: C(u)\left(h^{2(r+1)}+ \frac{k^{2q}}{q^{2s-2}}\right)^{\frac{1}{2}}. &&
\end{flalign*}
\end{remark}
\section{Numerical experiments}\label{numerical}
In this section, we show some numerical results to verify the convergence properties of the DG-in-time scheme introduced in  Section \ref{fullydiscrete}.
\subsection{Numerical results for a scalar linear wave equation}\label{linear_wave_experiments}
We consider the one-dimensional wave problem
$$\begin{cases}
\ddot u(x,t)+2\gamma\dot u(x,t)+\gamma^2u(x,t)-\partial_{xx} u(x,t))=f(x,t) &\quad \text{ in } (0,1)\times (0,T],\\
u(0, t) = u(1,t)= 0 &\quad\text{for all } t\in (0,T], \\
u(x,0)=u_0(x), \quad \dot u(x,0)=u_1(x).
\end{cases}$$
Here, we set $T=1$, $\gamma=1$ and let $u_0$, $u_1$ and $f$ be chosen such that the exact solution is 
$$u(x,t)=\sin(\sqrt{2}\pi t)\sin(\pi x).$$
That is, $u_0(x)\equiv 0$, $u_1(x)= \sqrt{2}\pi \sin(\pi x)$, and $$f(x,t)=[(-\pi^2+\gamma^2)\sin(\sqrt{2}\pi t)+2\sqrt{2}\gamma\pi\cos(\sqrt{2}\pi t)]\sin(\pi x).$$
We first discretize the problem in the spatial direction using continuous piecewise polynominals of degree $r\geq 1$, and compute the corresponding mass and stiffness matrices using FEniCS. Let $\mathcal{V}_h$ be the finite element function space with $h$ being the spatial discretization parameter specified previously. The numerical approximation of the one-dimensional wave-type equation is the following: find $u_h\in \mathcal{V}_h$ such that 
$$\int_{\Omega} \ddot{u}_h\cdot v_h\: \mathrm{d} x + \int_{\Omega} 2\gamma\dot{u}_h\cdot v_h\: \mathrm{d} x+\int_{\Omega} \gamma^2 u_h\cdot v_h\: \mathrm{d} x+ \int_{\Omega}\partial_x u_h \partial_x v_h\: \mathrm{d} x = \int_{\Omega} f\cdot v_h\: \mathrm{d} x,$$
for all $v_h\in\mathcal{V}_h$.
After discretization in space based on the above weak formulation, the problem results in the following second-order differential system for the nodal displacement $\mathbf{U}(t)$:
$$\begin{cases} \tilde{M} \ddot{\mathbf{U}}(t)+2\gamma \tilde{M}\dot{\mathbf{U}}(t)+\gamma^2\tilde{M}\mathbf{U}(t)+\tilde{K}\mathbf{U}(t)= \mathbf{F}(t), \quad t\in (0,T], &\\
\dot{\mathbf{U}}(0)=\mathbf{U}_1, \\
\mathbf{U}(0)=\mathbf{U}_0,
\end{cases}$$
where $\ddot{\mathbf{U}}(t)$ (respectively $\dot{\mathbf{U}}(t)$) represents the vector of nodal acceleration (respectively velocity) and $\mathbf{F}(t)$ is the vector of externally applied loads. Here, $\tilde{M}$ and $\tilde{K}$ are the mass and stiffness matrices (with a Dirichlet boundary condition applied) whose entries are, respectively, 
$$\tilde{M}_{ij}:=\int_0^1 \psi_i(x)\psi_j(x) \: \mathrm{d} x,$$
$$\tilde{K}_{ij}:=\int_0^1 \partial_x\psi_i(x)\partial_x\psi_j(x) \: \mathrm{d} x,$$
where $\{\psi_i\}_{i=1}^r$ are the basis functions in the spatial direction. 

Multiplying the above algebraic formulation by $\tilde{M}^{-\frac{1}{2}}$ and setting $\mathbf{Z}(t)=\tilde{M}^{\frac{1}{2}}\mathbf{U}(t)$, we obtain
\begin{equation}\label{algebra_eq}
\mathbf{\ddot{Z}}(t) + L\mathbf{\dot{Z}}(t)+ K\mathbf{Z}(t) =\mathbf{G}(t), \quad t\in (0,T],
\end{equation}
\begin{equation}\label{algebra_ic}
\mathbf{\dot{Z}}(0)= \tilde{M}^{\frac{1}{2}}\mathbf{U}_1, \quad \mathbf{Z}(0)=\tilde{M}^{\frac{1}{2}} \mathbf{U}_0,
\end{equation}
where $\mathbf{U}_0=[0, \ldots,0]^{\mathrm{T}}\in \mathbb{R}^{\hat{d}}$ and $\mathbf{U}_1$ is the $\hat{d}$-vector corresponding to $u_1$ at the grid points. Here $L=2\gamma\mathrm{Id}$, $K=\gamma^2\mathrm{Id}+\tilde{M}^{-\frac{1}{2}}\tilde{K}\tilde{M}^{-\frac{1}{2}}$ and $\mathbf{G}(t)=\tilde{M}^{-\frac{1}{2}}\mathbf{F}(t).$

We subdivided $[0,T)$ with $T=1$ into $N$ subintervals $I_n$, for $n=1,\ldots,N,$ of uniform length $k$. We assume that the polynomial degree in time is constant at each time step. That is, $q_1=\cdots=q_N\geq 2.$ If we consider the time integration on a generic time interval $I_n$ for each $n=1,\ldots N$, our discontinuous-in-time formulation reads as follows:
find $\mathbf{Z}\in\mathcal{V}_{kh}^{q_n}$ such that 
\begin{flalign}\label{weak_form_t}
&\left(\mathbf{\ddot{Z}}(t), \dot{\mathbf{v}}\right)_{L^2(I_n)}+\left(L\mathbf{\dot{Z}}(t),\dot{\mathbf{v}}\right)_{L^2(I_n)}+\left(K\mathbf{Z}(t),\dot{\mathbf{v}}\right)_{L^2(I_n)}+\mathbf{\dot{Z}}(t_{n-1}^{+})\cdot \dot{\mathbf{v}}(t_{n-1}^{+})+K\mathbf{Z}(t_{n-1}^{+})\cdot\mathbf{v}(t_{n-1}^{+}) &&\nonumber\\
&= \left( \mathbf{G}(t), \dot{\mathbf{v}}\right)_{L^2(I_n)}+\mathbf{\dot{Z}}(t_{n-1}^{-})\cdot\dot{\mathbf{v}}(t_{n-1}^{+})+K\mathbf{Z}(t_{n-1}^{-})\cdot\mathbf{v}(t_{n-1}^{+}), \nonumber &&
\end{flalign}
for all $\mathbf{v}\in \mathcal{V}_{kh}^{q_n}$, where on the right-hand side the values $\mathbf{\dot{Z}}(t_{n-1}^{-})$ and $\mathbf{Z}(t_{n-1}^{-})$ computed for $I_{n-1}$ are used as initial conditions for the current time interval. For $I_1$, we set $\mathbf{\dot{Z}}(t_0^{-})=\mathbf{\dot{Z}}(0)$ and $\mathbf{Z}(t_0^{-})=\mathbf{Z}(0).$ Focusing on the generic time interval $I_n$, we introduce the basis functions in the time direction $\{\phi^{j}(t)\}_{j=1}^{q_n+1}$ for the polynomial space $\mathbb{P}^{q_n}(I_n)$ and define $D=\hat{d}(q_n+1),$ the dimension of the local finite element space $\mathcal{V}_{kh}^{q_n}.$ We also introduce the vectorial basis $\{\Phi_{m}^j(t)\}_{m=1,\ldots,\hat{d}}^{j=1,\ldots, q_n+1}$, where $\Phi_{m}^j(t)$ is the $\hat{d}$-dimensional vector whose $m$-th component is $\phi^j(t)$ and the other components are zero. We write 
\begin{equation}\label{basis_decom_in_t}
\mathbf{Z}(t)=\sum_{m=1}^{\hat{d}} \sum_{j=1}^{q_n+1} \alpha_{m}^j \Phi_m^j(t),
\end{equation}
where $\alpha_m^j\in\mathbb{R}$ for $m=1,\ldots, \hat{d}$, $j=1,\ldots,q_n+1$.
By choosing $\mathbf{v}(t)= \Phi_{m}^j(t)$ for each $m=1,\ldots,\hat{d}$, $j=1,\ldots, q_n+1$, we obtain the following algebraic system
\begin{equation}\label{final_algebraic}
\mathbf{A}\mathbf{z}=\mathbf{b},
\end{equation}
where $\mathbf{z}\in\mathbb{R}^{D}=\mathbb{R}^{(q_n+1)\hat{d}}$ is the solution vector (values for $\alpha_m^j$). Here $\mathbf{b}\in\mathbb{R}^D$ corresponds to the right-hand side, which is given componentwise as 
\begin{equation}\label{rhsb}
\mathbf{b}_m^j =  \left( \mathbf{G}(t), \dot{\Phi}_m^j\right)_{L^2(I_n)}+\mathbf{\dot{Z}}(t_{n-1}^{-})\cdot\dot{\Phi}_m^j(t_{n-1}^{+})+K\mathbf{Z}(t_{n-1}^{-})\cdot\Phi_m^j(t_{n-1}^{+}), 
\end{equation}
for $m=1,\ldots,\hat{d}$, $j=1,\ldots,q_n+1.$
$\mathbf{A}$ is the local stiffness matrix with its structure being discussed below.  For $l,j=1,\ldots,q_n+1$, 
\begin{align*}
&M_{lj}^1=\:\left(\ddot{\phi}^j, \dot{\phi}^l\right)_{L^2(I_n)},\quad M_{lj}^2=\:\left(\dot{\phi}^j, \dot{\phi}^l\right)_{L^2(I_n)},\quad M_{lj}^3=\:\left(\phi^j, \dot{\phi}^l\right)_{L^2(I_n)}, \\
& M_{lj}^4=\:\dot{\phi}^j(t_{n-1}^{+})\cdot \dot{\phi}^l(t_{n-1}^{+}), \quad M_{lj}^5=\:\phi^j(t_{n-1}^{+})\cdot \phi^l(t_{n-1}^{+}).
\end{align*}
Setting 
\begin{align*}
M= & \:M^1+M^4,\\
B_{ij}= &\: L_{ij} M^2+K_{ij}(M^3+M^5),
\end{align*}
with $M, B_{ij}\in\mathbb{R}^{(q_n+1)\times (q_n+1)}$ for any $i,j=1,\ldots,\hat{d}$, we can rewrite the matrix $\mathbf{A}$ as 
$$\mathbf{A}= \begin{bmatrix}
M & 0 & 0  &\cdots & 0  \\
0 & M & 0 &\cdots & 0 \\
\vdots& \ddots  & \ddots &\ddots& \vdots & \\
0  & 0 & 0 &\cdots & M
\end{bmatrix}+\begin{bmatrix}
B_{1,1} & B_{1,2} & \cdots & B_{1,d}  \\
B_{2,1} & B_{2,2}& \cdots & B_{2,d} \\
\vdots& \ddots & \ddots  & \vdots & \\
B_{d,1} & B_{d,2}  &\cdots & B_{d,d}
\end{bmatrix}.$$
For each time interval $I_n=(t_{n-1}, t_n]$, we use the following shifted Legendre polynomials $\{\phi_i\}$ as the basis polynomials:
\begin{align*}
& \phi^1(t)=1, \quad \phi^2(t)=\frac{2(t-t_{n-1}^{+})}{k_n}-1, \quad \phi^3(t)=\frac{6(t-t_{n-1}^{+})^2}{k_n^2}-\frac{6(t-t_{n-1}^{+})}{k_n}+1, \\
{}& \phi^4(t)=\frac{20(t-t_{n-1}^{+})^3}{k_n^3}-\frac{30(t-t_{n-1}^{+})^2}{k_n^2}+\frac{12(t-t_{n-1}^{+})}{k_n}-1,\\
{} & \phi^5(t)=\frac{70(t-t_{n-1}^{+})^4}{k_n^4}-\frac{140(t-t_{n-1}^{+})^3}{k_n^3}+\frac{90(t-t_{n-1}^{+})^2}{k_n^2}-\frac{20(t-t_{n-1}^{+})}{k_n}+1,\\
{} & \phi^6(t)=\frac{252(t-t_{n-1}^{+})^5}{k_n^5}-\frac{630(t-t_{n-1}^{+})^4}{k_n^4}+\frac{560(t-t_{n-1}^{+})^3}{k_n^3}-\frac{210(t-t_{n-1}^{+})^2}{k_n^2}+\frac{30(t-t_{n-1}^{+})}{k_n}-1.
\end{align*}
This implies that 
\begin{align*}
&\dot{\phi}^1(t)=0, \quad \dot{\phi}^2(t)=\frac{2}{k_n}, \quad \dot{\phi}^3(t)=\frac{12(t-t_{n-1}^{+})}{k_n^2}-\frac{6}{k_n},\\ 
&\dot{\phi}^4(t)=\frac{60(t-t_{n-1}^{+})^2}{k_n^3}-\frac{60(t-t_{n-1}^{+})}{k_n^2}+\frac{12}{k_n},\\
&\dot{\phi}^5(t)=\frac{280(t-t_{n-1}^{+})^3}{k_n^4}-\frac{420(t-t_{n-1}^{+})^2}{k_n^3}+\frac{180(t-t_{n-1}^{+})}{k_n^2}-\frac{20}{k_n},\\
&\dot{\phi}^6(t)=\frac{1260(t-t_{n-1}^{+})^4}{k_n^5}-\frac{2520(t-t_{n-1}^{+})^3}{k_n^4}+\frac{1680(t-t_{n-1}^{+})^2}{k_n^3}-\frac{420(t-t_{n-1}^{+})}{k_n^2}+\frac{30}{k_n},
\end{align*}
and
\begin{align*}
&\ddot{\phi}^1(t)=0, \quad\ddot{\phi}^2(t)=0, \quad \ddot{\phi}^3(t)=\frac{12}{k_n^2},\quad\ddot{\phi}^4(t)=\frac{120(t-t_{n-1}^{+})}{k_n^3}-\frac{60}{k_n^2}, \\
&\ddot{\phi}^5(t)=\frac{840(t-t_{n-1}^{+})^2}{k_n^4}-\frac{840(t-t_{n-1}^{+})}{k_n^3}+\frac{180}{k_n^2},\\
&\ddot{\phi}^6(t)=\frac{5040(t-t_{n-1}^{+})^3}{k_n^5}-\frac{7560(t-t_{n-1}^{+})^2}{k_n^4}+\frac{3360(t-t_{n-1}^{+})}{k_n^3}-\frac{420}{k_n^2}.
\end{align*}
We compute $M_1, \cdots M_5$ according to the above-mentioned formulae for $q_n=2,3,4,5$ respectively.

Here we use $CG$--$r$ elements (continuous piecewise polynomials of degree $r$) in space with $h=k$, $T=1$ and $\gamma=1$, and compute the errors $|||\mathbf{Z}-\mathbf{Z}_{\mathrm{DG}}|||$ and $\|\dot{u}(T)-\dot{u}_{\mathrm{DG}}(t_N^{-})\|_{L^2}$ versus $k$ for $k=2^{-l}$, $l=1, 2,3,4,5$, with respect to polynomial degrees $q=2,3,4,5$. We choose $r=q-1$, i.e. the polynomial degree in the spatial direction is one order less than the polynomial degree in the time direction. Both the semi-discrete errors in the energy norm and the fully discrete errors in the $L^2$-norm are computed in Table \ref{tab:table1} and plotted in a log-log scale in 
Fig. \ref{fig:plot1} and Fig. \ref{fig:plot2}.

\begin{table}[h!]
  \begin{center}
    \caption{Computed errors $|||\mathbf{Z}-\mathbf{Z}_{\mathrm{DG}}|||$,$||\dot{u}(T)-\dot{u}_{\mathrm{DG}}(t_N^{-})||_{L^2} $ and corresponding convergence rates with respect to polynomial degrees $q=2, 3, 4,5$.}
    \label{tab:table1}
    \begin{tabular}{c c c c c c } 
     \hline
      $ q$  &    $k$   &  energy-norm error  & rate  & $L^2$-error & rate \\
      \hline 
  $2$  & $5.000\mathrm{e}-1$    & $ 1.6504\mathrm{e}-0$   & ---  & $5.6323\mathrm{e}-1$ & ---  \\
       & $2.500\mathrm{e}-1$    &  $6.5087\mathrm{e}-1$  &   $1.3424$                                                                   &$1.5238\mathrm{e}-1$ &  $1.8861$\\
      & $1.250\mathrm{e}-1$    &  $ 2.3300\mathrm{e}-1$   &   $1.4820$&$3.8942\mathrm{e}-2$ &  $1.9683$\\
&    $6.250\mathrm{e}-2$    &  $8.3340\mathrm{e}-2$      &  $1.4832$ & $9.7781\mathrm{e}-3$ &  $1.9937$\\
&    $3.125\mathrm{e}-2$    &  $2.9431\mathrm{e}-2$      &  $1.5017$ & $2.4452\mathrm{e}-3$ &  $1.9996$\\
 
$3$  & $5.000\mathrm{e}-1$    & $  3.3051\mathrm{e}-1$   & ---  & $2.1979\mathrm{e}-2$ & ---  \\
       & $2.500\mathrm{e}-1$    &  $6.6921\mathrm{e}-2$  &   $2.3041$                                                                   &$2.5286\mathrm{e}-3$ &  $3.1197$\\
      & $1.250\mathrm{e}-1$    &  $ 1.2170\mathrm{e}-2$   &   $2.4591$&$2.9962\mathrm{e}-4$ &  $3.0771$\\
&    $6.250\mathrm{e}-2$    &  $2.1682\mathrm{e}-3$      &  $2.4888$ & $3.6708\mathrm{e}-5$ &  $3.0290 $\\
&    $3.125\mathrm{e}-2$    &  $3.8421\mathrm{e}-4$      &  $2.4965$ & $4.5613\mathrm{e}-6$ &  $3.0086$\\
  $4$  & $5.000\mathrm{e}-1$    & $  6.3950\mathrm{e}-2$   & ---  & $1.9566\mathrm{e}-3$ & ---  \\
       & $2.500\mathrm{e}-1$    &  $5.7749\mathrm{e}-3$  &   $3.4691$                                                                   &$1.2436\mathrm{e}-4$ &  $3.9758$\\
      & $1.250\mathrm{e}-1$    &  $ 5.1721\mathrm{e}-4$   &   $3.4810$&$7.7114\mathrm{e}-6$ &  $4.0114$\\
&    $6.250\mathrm{e}-2$    &  $4.6070\mathrm{e}-5$      &  $3.4889$ & $4.8969\mathrm{e}-7$ &  $3.9771 $\\
&    $3.125\mathrm{e}-2$    &  $4.1121\mathrm{e}-6$      &  $3.4859$ & $3.0656\mathrm{e}-8$ &  $3.9976$\\
  $5$  & $5.000\mathrm{e}-1$    & $  6.8763\mathrm{e}-3$   & ---  & $1.5180\mathrm{e}-4$ & ---  \\
       & $2.500\mathrm{e}-1$    &  $3.3619\mathrm{e}-4$  &   $4.3542$                                                                   &$4.3686\mathrm{e}-6$ &  $5.1189$\\
      & $1.250\mathrm{e}-1$    &  $ 1.5264\mathrm{e}-5$   &   $4.4669$&$1.2188\mathrm{e}-7$ &  $5.1636$\\
&    $6.250\mathrm{e}-2$    &  $ 6.9025\mathrm{e}-7$      &  $4.4669$ & $3.8640\mathrm{e}-9$ &  $4.9792 $\\
&    $3.125\mathrm{e}-2$    &  $3.1659\mathrm{e}-8$      &  $4.4464$ & $1.4264\mathrm{e}-10$ &  $4.7586$\\
\hline        
    \end{tabular}
  \end{center}
\end{table}
As expected, the convergence rate in the energy norm is of order $O(k^{q-\frac{1}{2}})$, which is consistent with Remark \ref{uniform_grid}. The $L^2$-error decreases as the time step $k$ deceases. In particular, the convergence rate of $O(k^{q})$ is observed. This agrees with our theoretical result when $h=k$ and $r=q-1$ (cf. Remark \ref{l2_end}). If we use $CG$-$1$ elements in space instead, the convergence rate will remain the same as we increase the polynomial degree from $q=2$ to $q=3,4,5$. This is because  now $O(h^2)$ is dominating $O(k^{q})$ for $q>2$ in the error estimate (cf. Table \ref{tab:table2}).
\begin{figure}[h!]
 \centering
  \begin{minipage}[b]{0.7\textwidth}
    \includegraphics[width=\textwidth]{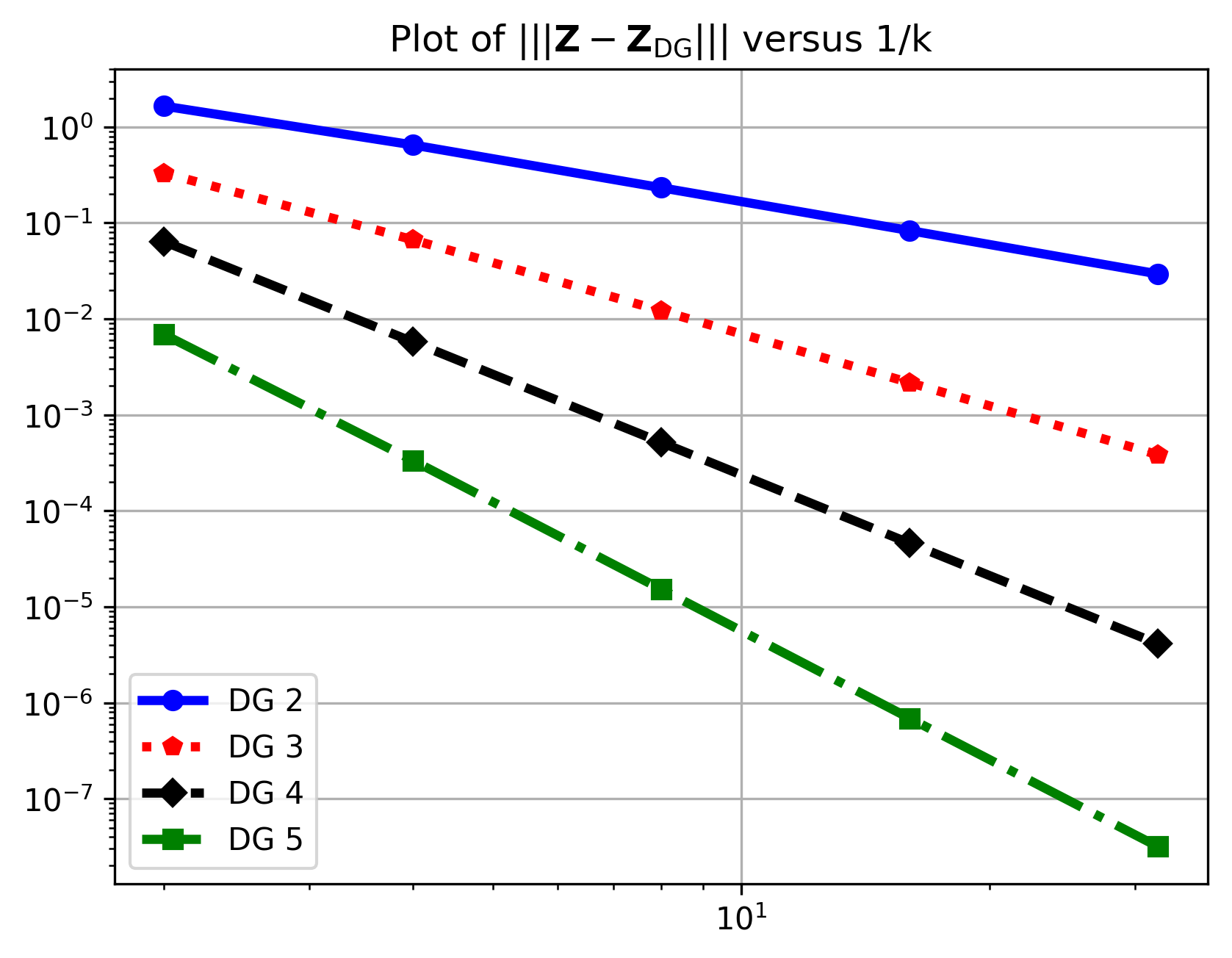}
\caption{$|||\mathbf{Z}-\mathbf{Z}_{\mathrm{DG}}|||$ versus $1/k$, with $k=2^{-l}$ for $l=1,2,3,4,5$ in a log-log scale.}\label{fig:plot1}
  \end{minipage}
  \hfill
  \begin{minipage}[b]{0.7\textwidth}
    \includegraphics[width=\textwidth]{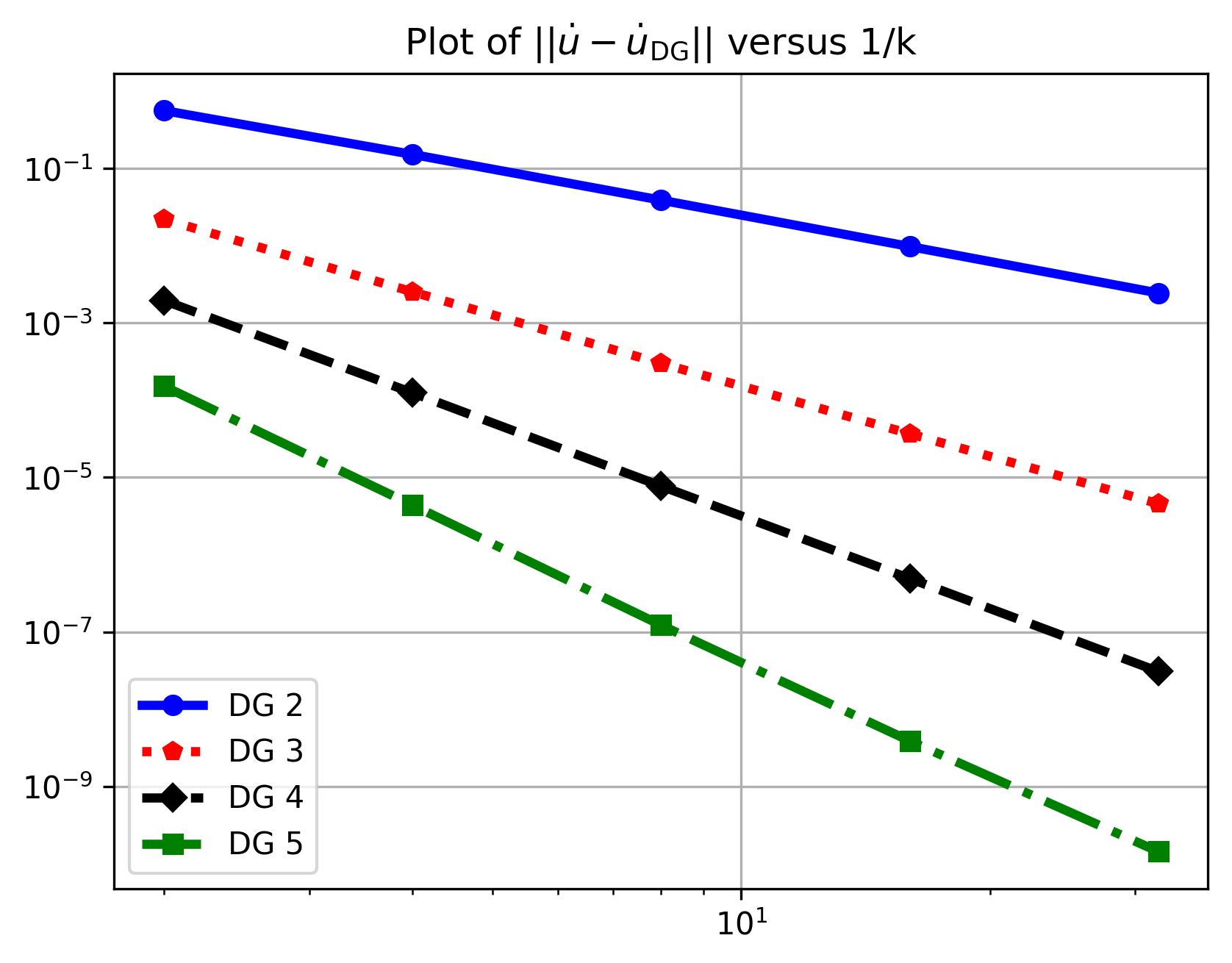}
\caption{$\|\dot{u}-\dot{u}_{\mathrm{DG}}\|_{L^2}$ versus $1/k$, with $k=2^{-l}$ for $l=1,2,3,4,5$  in a log-log scale.}\label{fig:plot2}
  \end{minipage}
\end{figure}
\begin{table}[h!]
  \begin{center}
  \caption{Computed errors $|||\mathbf{Z}-\mathbf{Z}_{\mathrm{DG}}|||$, $||\dot{u}(T)-\dot{u}_{\mathrm{DG}}(t_N^{-})||_{L^2} $ for $CG$--$1$ elements in space and corresponding convergence rates with respect to polynomial degrees $q=2, 3, 4, 5$.}
    \label{tab:table2}
    \begin{tabular}{c c c c c c } 
     \hline
      $ q$  &    $k$   &  energy-norm error  & rate  & $L^2$-error & rate \\
      \hline 
  $2$  & $5.000\mathrm{e}-1$    & $ 1.6504\mathrm{e}-0$   & ---  & $5.6323\mathrm{e}-1$ & ---  \\
       & $2.500\mathrm{e}-1$    &  $6.5087\mathrm{e}-1$  &   $1.3424$                                                                   &$1.5238\mathrm{e}-1$ &  $1.8861$\\
      & $1.250\mathrm{e}-1$    &  $ 2.3300\mathrm{e}-1$   &   $1.4820$&$3.8942\mathrm{e}-2$ &  $1.9683$\\
&    $6.250\mathrm{e}-2$    &  $8.3340\mathrm{e}-2$      &  $1.4832$ & $9.7781\mathrm{e}-3$ &  $1.9937$\\
&    $3.125\mathrm{e}-2$    &  $2.9431\mathrm{e}-2$      &  $1.5017$ & $2.4452\mathrm{e}-3$ &  $1.9996$\\
 
  $3$  & $5.000\mathrm{e}-1$    & $  7.0451\mathrm{e}-1$   & ---  & $5.9765\mathrm{e}-1$ & ---  \\
       & $2.500\mathrm{e}-1$    &  $1.9493\mathrm{e}-1$  &   $1.8537$                                                                   &$1.5648\mathrm{e}-1$ &  $1.9333$\\
      & $1.250\mathrm{e}-1$    &  $ 4.8544\mathrm{e}-2$   &   $2.0056$&$3.9116\mathrm{e}-2$ &  $2.0001$\\
&    $6.250\mathrm{e}-2$    &  $1.2016\mathrm{e}-2$      &  $2.0143$ & $9.7726\mathrm{e}-3$ &  $2.0009 $\\
&    $3.125\mathrm{e}-2$    &  $2.9838\mathrm{e}-3$      &  $2.0097$ & $2.4426\mathrm{e}-3$ &  $2.0003$\\
  $4$  & $5.000\mathrm{e}-1$    & $  6.5331\mathrm{e}-1$   & ---  & $6.1114\mathrm{e}-1$ & ---  \\
       & $2.500\mathrm{e}-1$    &  $1.8357\mathrm{e}-1$  &   $1.8314$                                                                   &$1.5677\mathrm{e}-1$ &  $1.9629$\\
      & $1.250\mathrm{e}-1$    &  $ 4.7005\mathrm{e}-2$   &   $1.9654$&$3.9124\mathrm{e}-2$ &  $2.0025$\\
&    $6.250\mathrm{e}-2$    &  $1.1819\mathrm{e}-2$      &  $1.9917$ & $9.7728\mathrm{e}-3$ &  $2.0012 $\\
&    $3.125\mathrm{e}-2$    &  $2.9590\mathrm{e}-3$      &  $1.9979$ & $2.4426\mathrm{e}-3$ &  $2.0004$\\
   $5$  & $5.000\mathrm{e}-1$    & $  6.5001\mathrm{e}-1$   & ---  & $6.1136\mathrm{e}-1$ & ---  \\
       & $2.500\mathrm{e}-1$    &  $1.8348\mathrm{e}-1$  &   $1.8248$                                                                   &$1.5677\mathrm{e}-1$ &  $1.9934$\\
      & $1.250\mathrm{e}-1$    &  $ 4.7002\mathrm{e}-2$   &   $1.9648$&$3.9124\mathrm{e}-2$ &  $2.0025$\\
&    $6.250\mathrm{e}-2$    &  $1.1819\mathrm{e}-2$      &  $1.9916$ & $9.7728\mathrm{e}-3$ &  $2.0012 $\\
&    $3.125\mathrm{e}-2$    &  $2.9590\mathrm{e}-3$      &  $1.9979$ & $2.4426\mathrm{e}-3$ &  $2.0004$\\
\hline        
       
    \end{tabular}
  \end{center}
\end{table}
\subsection{Numerical results for a two-dimensional elastodynamics problem}
Now we consider a two-dimensional linear elastodynamics problem. For $T>0$, find $\mathbf{u}\colon\Omega\times [0, T]\to\mathbb{R}^m$ such that 
$$\begin{cases}
\rho \ddot{ \mathbf{u}}+2\rho\gamma\dot{\mathbf{u}}+\rho\gamma^2\mathbf{u}-\nabla \cdot \bm{\sigma}=\mathbf{f} &\quad \text{ in } \Omega\times (0,T],\\

\mathbf{u}=0 &\quad\text{on } \partial\Omega\times (0,T], \\
\mathbf{u}(x,0)=\mathbf{u}_0(x), \quad \dot{\mathbf{u}}(x,0)=\mathbf{u}_1(x) &\quad \text{ in }\Omega.
\end{cases}$$
Here $\mathbf{f}\in L^2((0,T], L^2(\Omega))$ is the source term, and $\rho\in L^{\infty}(\Omega)$ is such that $\rho=\rho(\mathbf{x})>0$ for almost any $\mathbf{x}\in\Omega$. The stress tensor $\bm{\sigma}$ is defined through Hooke's law, that is, 
\begin{equation}
\bm{\sigma} =2\mu \bm{\varepsilon} +\lambda \mathrm{tr}(\bm{\varepsilon})\mathrm{Id},
\end{equation}
where $\mathrm{Id}$ is the identity matrix, $\mathrm{tr}$ is the trace operator, and 
\begin{equation}
\bm{\varepsilon}(\mathbf{u})=\frac{1}{2}(\nabla \mathbf{u}+\nabla \mathbf{u}^T).
\end{equation}
We consider $\Omega=(0,1)\times (0,1)$, $T=1$, and set $\rho=1, \gamma=1, \lambda=1$ and $\mu=1$.  Here we choose $\mathbf{u}_0$, $\mathbf{u}_1$ and $\mathbf{f}$ such that the exact solution is 
$$\mathbf{u}(x,t)=\sin(\sqrt{2}\pi t)\begin{bmatrix}
-\sin^2(\pi x)\sin(2\pi y)\\
\sin(2\pi x)\sin^2(\pi y)
\end{bmatrix}.$$  
Then $\mathbf{u}_0(x)= \begin{bmatrix}0  \\ 0\end{bmatrix}$, $\mathbf{u}_1(x)= \sqrt{2}\pi \begin{bmatrix}-\sin^2(\pi x)\sin(2\pi y) \\ \sin(2\pi x)\sin^2(\pi y)\end{bmatrix}$ and 
\begin{align*}
\mathbf{f}(x,t)= &\left[ (6\pi^2 +0.01)
\sin(\sqrt{2}\pi t)+0.2\sqrt{2}\pi\cos(\sqrt{2}\pi t)\right]\begin{bmatrix}-\sin^2(\pi x)\sin(2\pi y) \\ \sin(2\pi x)\sin^2(\pi y)\end{bmatrix}\\
&+2\pi^2\sin(\sqrt{2}\pi t)\begin{bmatrix}\sin(2\pi y)\\ -\sin(2\pi x)\end{bmatrix}.
\end{align*}
We discretize the linearized elastodynamics equations in the same manner as in Section \ref{linear_wave_experiments}. Here we use $CG$--$r$ elements (continuous piecewise polynomials of degree $r$) in space with $r=q$, $h=k$ and compute the errors $|||\mathbf{Z}-\mathbf{Z}_{\mathrm{DG}}|||$ and $\|\dot{\mathbf{u}}(T)-\dot{\mathbf{u}}_{\mathrm{DG}}(t_N^{-})\|_{L^2}+\| \mathbf{u}(T)-\mathbf{u}_{\mathrm{DG}}(t_N^{-})\|_{L^2}$ versus $k$ for $k=0.250, 0.200,0.125$ and $0.100$ with respect to polynomial degrees $q=2,3,4$.  Note that here we use $k=h=0.100$ instead of $0.0625$ for the smallest time step; this is due to the computational limits of FEniCs for the high order approximation of non-scalar problems. In particular, it may take hours to compute the solution when we use $r=q=4$ with $17$ grid points (e.g. $h=0.0625$) on each direction of the chosen $2$-dimensional domain $\Omega=(0,1)\times (0,1)$. Thus, we choose the last step to be $0.100$ to ensure that we still have sufficient data to compute the convergence rates. Both the semi-discrete errors in the energy norm and the fully discrete errors in the $L^2$-norm are computed in Table \ref{tab:table3} and plotted in a log-log scale in 
Fig. \ref{fig:plot3} and Fig. \ref{fig:plot4}.
\begin{figure}[h!]
 \centering
\begin{minipage}[b]{0.7\textwidth}
\includegraphics[width=\textwidth]{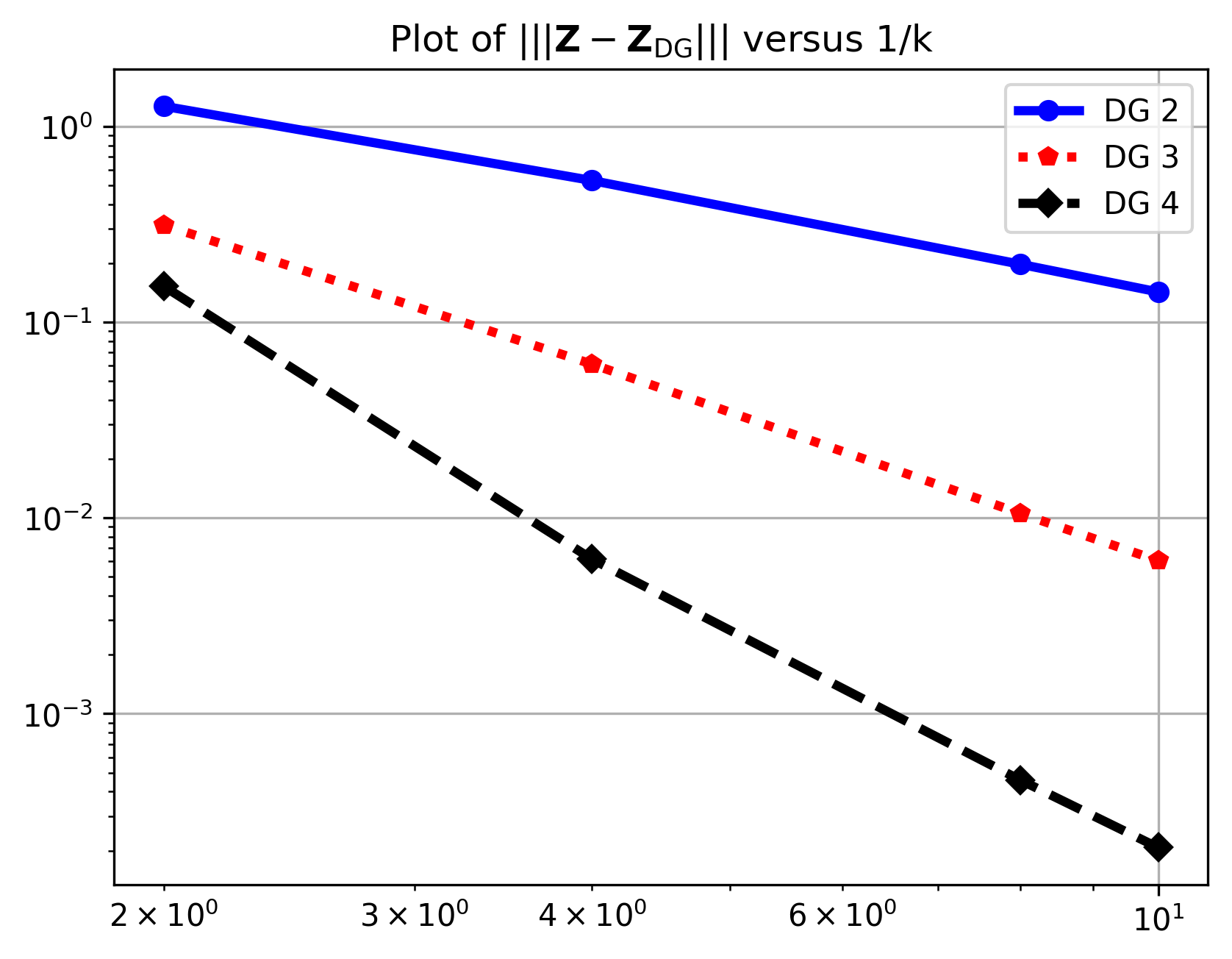}
\caption{$|||\mathbf{Z}-\mathbf{Z}_{\mathrm{DG}}|||$ versus $1/k$, with $k=0.500, 0.250, 0.125$ and $0.100$ in a log-log scale.}\label{fig:plot3}
 \end{minipage}
\hfill
\begin{minipage}[b]{0.7\textwidth}
    \includegraphics[width=\textwidth]{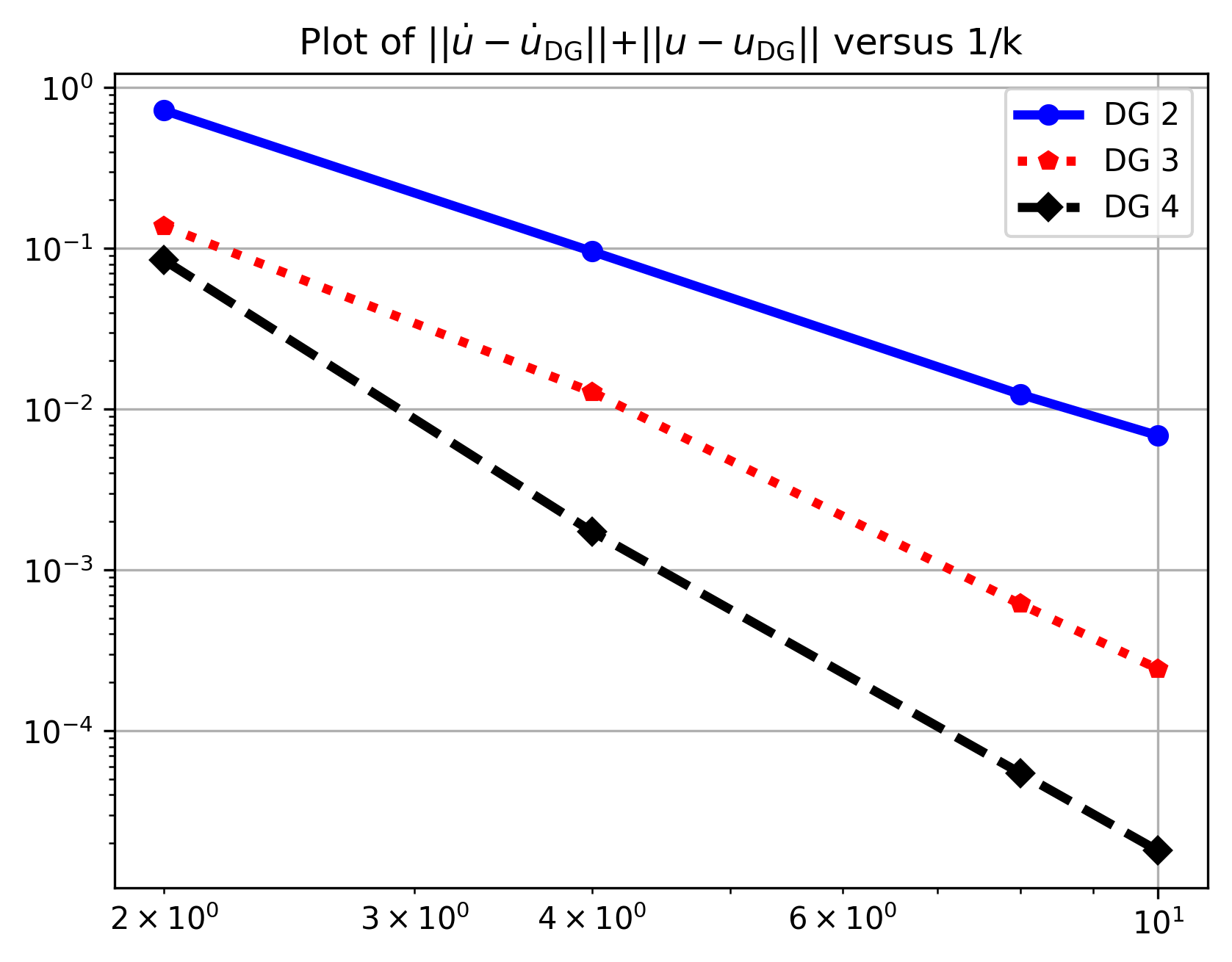}
\caption{$\|\dot{\mathbf{u}}-\dot{\mathbf{u}}_{\mathrm{DG}}\|_{L^2}+\|\mathbf{u}-\mathbf{u}_{\mathrm{DG}}\|_{L^2}$ versus $1/k$, with $k=0.500, 0.250, 0.125$ and $0.100$ in a log-log scale.}\label{fig:plot4}
  \end{minipage}
\end{figure}
  \begin{table}[t]
  \begin{center}
    \caption{Computed errors $|||Z-Z_{\mathrm{DG}}|||$,$||\dot{\mathbf{u}}(T)-\dot{\mathbf{u}}_{\mathrm{DG}}(t_N^{-})||_{L^2} +||\mathbf{u}(T)-\mathbf{u}_{\mathrm{DG}}(t_N^{-})||_{L^2}$ and corresponding convergence rates with respect to polynomial degrees $q=2, 3, 4$.}
    \label{tab:table3}
    \begin{tabular}{c c c c c c } 
     \hline
      $ q$  &    $k$   &  energy-norm error  & rate  & $L^2$-error & rate \\
      \hline 
 $2$  & $5.00\mathrm{e}-1$    & $ 1.2668\mathrm{e}-0$   & ---  & $7.2172\mathrm{e}-1$ & ---  \\
   & $2.50\mathrm{e}-1$    & $ 5.2863\mathrm{e}-1$   & $1.2609$  & $9.5802\mathrm{e}-2$ & $2.9133$ \\
   & $1.25\mathrm{e}-1$    &  $ 1.9754\mathrm{e}-1$   &   $1.4201$&$1.2390\mathrm{e}-2$ &  $2.9509$\\
&    $1.00\mathrm{e}-1$    &  $1.4262\mathrm{e}-2$      &  $1.4599$ & $6.8663\mathrm{e}-3$ &  $2.6452$\\
$3$    & $5.00\mathrm{e}-1$    & $3.1328\mathrm{e}-1$   & ---  & $1.3788\mathrm{e}-1$ &  ---\\
       & $2.50\mathrm{e}-1$    &  $6.0999\mathrm{e}-2$  &   $2.3606$ & $1.2789\mathrm{e}-2$ &  $3.4304$\\
       & $1.25\mathrm{e}-1$    &  $ 1.0548\mathrm{e}-2$   &   $2.5318$ &$6.1569\mathrm{e}-4$ &  $4.3765$\\
&    $1.00\mathrm{e}-1$    &      $6.0522\mathrm{e}-3$  &  $2.4895$ & $2.4334\mathrm{e}-4$ &  $4.1600$  \\  
$4$  & $5.00\mathrm{e}-1$    & $1.5241\mathrm{e}-1$   & ---& $8.4535\mathrm{e}-2$ &  --- \\
       & $2.50\mathrm{e}-1$    &  $6.1539\mathrm{e}-3$  &   $4.6303$                                                                   & $1.7324\mathrm{e}-3$ &  $5.6087$ \\
      & $1.25\mathrm{e}-1$    &  $4.5846\mathrm{e}-4$   &   $3.7466$ & $5.4731\mathrm{e}-5$ &  $4.9843$\\
&    $1.00\mathrm{e}-1$    &  $ 2.0732\mathrm{e}-4$  &  $3.5565$  &$1.7987\mathrm{e}-5$ &  $4.9868 $\\ 
\hline        
    \end{tabular}
  \end{center}
\end{table}

A convergence rate of $O(k^{q-\frac{1}{2}})$ for the energy norm is observed, in accordance with the theoretical result (cf. Remark \ref{uniform_grid}). For the fully discrete errors in the $L^2$-norm, the numerical experiments show a better convergence rate of $O(k^{q+1})$ rather than $O(k^{q}).$ This is reasonable since we use a higher-order degree of approximation in the spatial direction, e.g. we use $r=q$ instead of $q-1$. The motivation for choosing $r=q$ rather than $q-1$ is to ensure a more accurate approximation of the more complicated stiffness matrix in this example. Though the convergence rate should be dominated by $O(k^q)$ rather than $O(k^{r+1})$, the higher regularity of our exact solution may lead to a slightly better convergence rate.
\section*{Acknowledgments}
The author would like to thank Endre S\"{u}li for his helpful discussions and suggestions.


\begin{thebibliography}{99}
\bibitem{AMSQ}
\newblock P.F. Antonietti, I. Mazzieri, N. Dal Santo and A. Quarteroni,
\newblock  \emph{A high-order discontinuous Galerkin approximation to ordinary differential equations with applications to elastodynamics},
\newblock  \emph{IMA Journal of Numerical Analysis}, 38(4) (2018), pp. 1709--1734.

\bibitem{Ar} 
\newblock  D.N. Arnold, 
\newblock \emph{An interior penalty finite element method with discontinuous element},
\newblock SIAM J. Numer. Anal., 19 (1982), pp. 742--760.

\bibitem{ABCM} 
\newblock  D.N. Arnold, F. Brezzi, B. Cockburn, and L.D. Marini,
\newblock \emph{Unified analysis of discontinuous Galerkin methods for elliptic problems},
\newblock SIAM J. Numer. Anal., 39 (2002), pp. 1749--1779.

\bibitem{AMM} 
\newblock  M. Ainsworth, P. Monk, W. Muniz,
\newblock \emph{Dispersive and Dissipative Properties of Discontinuous Galerkin Finite Element Methods for the Second-Order
Wave Equation},
\newblock J. Sci. Comput. 27 (2006), no. 1-3, pp. 5--40.

\bibitem{AT} 
\newblock  S. Adjerid and H. Temimi,
\newblock \emph{A discontinuous Galerkin method for the wave equation},
\newblock Comput. Methods Appl. Mech. Engrg, 200 (2011), pp. 837--849.

\bibitem{BS} 
\newblock I. Babu\v{s}ka and M. Suri,  
\newblock \emph{The $h-p$ Version of the Finite Element Method with Quasiuniform meshes}, 
\newblock \emph{ESAIM Math. Model. Numer. Anal.}, 21 (1987), pp. 199--238.

\bibitem{BZ} 
\newblock I. Babu\v{s}ka and M. Zl\'{a}mal,
\newblock \emph{Nonconforming elements in the finite element method with penalty},
\newblock SIAM J. Numer. Anal., 10 (1973), pp. 863--875.

\bibitem{Ba} 
\newblock G.A. Baker
\newblock \emph{Finite element methods for elliptic equations using nonconforming elements},
\newblock Math. Comp., 31 (1977), pp.45--59.

\bibitem{Ba84}
\newblock  L. A. Bales, 
\newblock Semidiscrete and single step fully discrete approximations for second order hyperbolic equations with time-dependent coefficients,
\newblock \emph{Math. Comp} \textbf{43}(1984), pp. 383--414.  
     
\bibitem{Ba2}
\newblock  L. A. Bales, 
\newblock High-order single-step fully discrete approximations for second order hyperbolic equations with time-dependent coefficients,
\newblock \emph{Compuut. Math. Appl.} \textbf{12A} (1986), pp. 581--604.  


\bibitem{CHQZ} 
\newblock C. Canuto, M. Hussaini, A. Quarteroni, and T. Zang, 
\newblock \emph{Spectral Methods(Fundamental in Single Domains)}, 
\newblock Berlin: Springer, 2006.

\bibitem{Co}
\newblock B. Cockburn,
\newblock \emph{Discontinuous Galerkin Methods},
\newblock ZAMM, 83 (2003), pp. 731--754.

\bibitem{CS}
\newblock B. Cockburn and C.W. Shu,
\newblock \emph{The local discontinuous Galerkin finite
element method for convection-diffusion systems},
\newblock SIAM J. Numer. Anal., 35 (1998), pp. 2440--2463.

\bibitem{CS2}
\newblock B. Cockburn and C.-W. Shu,
\newblock \emph{TVB Runge–Kutta local projection discontinuous Galerkin finite element method for scalar conservation laws II: General framework},
\newblock Math. Comp., 52 (1989), pp. 411--435.

\bibitem{CFL}
\newblock R. Courant, K. Friedrichs, and H. Lewy, 
\newblock  \emph{\"{U}ber die partiellen Differenzengleichungen der mathematischen Physik},
\newblock Mathematische Annalen (in German), 100 (1), (1928), pp. 32-–74. 


\bibitem{EJT} 
\newblock K. Eriksson, C. Johnson, and V. Thom\'{e}e,
\newblock \emph{Time discretization of parabolic problems by the discontinuous Galerkin Method}, 
\newblock RAIRO Mod\'{e}l. Math. Anal. Num\'{e}r, 19 (1985), pp 611--643.

\bibitem{Ev} 
\newblock L. Evans,
\newblock \emph{Partial Differential Equations}, 
\newblock American Mathematical Society, 1998.

\bibitem{FR}
\newblock R.S. Falk and G.R. Richter,
\newblock \emph{Local error estimates for a finite element method for hyperbolic
and convection-diffusion equations}
\newblock SIAM J. Numer. Anal., 29 (1992), pp. 730--754.

\bibitem{GSS}
\newblock M.J. Grote, A. Schneebeli, and D. Sch\"{o}tzau,
\newblock\emph{ Discontinuous Galerkin Finite Element Method for the Wave Equation},
\newblock  \emph{SIAM J.Numer. Anal.}, 44 (2006), pp. 2408--2431.

\bibitem{Ja}
\newblock P. Jamet,
\newblock\emph{Galerkin-type approximations which are discontinuous in time for parabolic equations in a variable domain},
\newblock \emph{SIAM J.Numer. Anal.}, 15, (1978), pp 912--928.



\bibitem{Jo}
\newblock C. Johnson,
\newblock\emph{Discontinuous finite element for second-order hyperbolic problems},
\newblock \emph{Comput. Methods Appl. Mech. Engrg}, 107, (1993), pp 117--129.


\bibitem{JP}
\newblock C. Johnson and J. Pitk\"{a}ranta,
\newblock \emph{An analysis of the discontinuous Galerkin method for a scalar hyperbolic equation}
\newblock  Math. Comp., 46 (1986), pp. 1--26.


\bibitem{LM} 
\newblock J.-L Lions and E. Magenes,
\newblock \emph{Non-Homogeneous Boundary Value Problems and Applications}, 
\newblock Vol. I, Springer-Verlag, New York, 1972.


\bibitem{LR}
\newblock P. Lesaint and P.A. Raviart,
\newblock \emph{On a finite element method for solving the neutron transport equation},
\newblock in Mathematical Aspects of Finite Elements in Partial Differential Equations, ed. C. A. deBoor (Academic Press, 1974), pp. 89--123.


\bibitem{Ku}
\newblock M. Kutta,
\newblock \emph{Beitrag zur n\"{a}herungweisen Integration totaler Differentialgleichungen},
\newblock 1901.

\bibitem{Ne}
\newblock N.M. Newmark,
\newblock \emph{A method of computation for structural dynamics}, 
\newblock Journal of Engineering Mechanics, ASCE, (1959) 85 (EM3): 67--94.

\bibitem{RH}
\newblock W.H. Reed and T.R. Hill,
\newblock \emph{Triangular mesh methods for the neutron transport equation},
\newblock Technical Report LA-UR-73-479. Los Alamos, New Mexico: Los Alamos Scientific Laboratory, 1973.

\bibitem{Ri}
\newblock G.R. Richter,
\newblock \emph{The discontinuous Galerkin method with diffusion},
\newblock  Math. Comp., 58 (1992), pp. 631--643.

\bibitem{Riv} 
\newblock B. Rivi\`{e}re,
\newblock \emph{Discontinuous Galerkin Methods for Solving Elliptic and Parabolic Equations: Theory and Implementation},
\newblock SIAM Frontiers in Applied Mathematics, 2008.

\bibitem{Ru}
\newblock C.D.T. Runge,
\newblock \emph{\"{U}ber die numerische Auflösung von Differentialgleichungen}, 
\newblock Mathematische Annalen, Springer, 46 (2) (1895), pp.167--178.

\bibitem{SS}
\newblock D. Sch\"{o}tzau and C. Schwab,
\newblock  \emph{Time Discretization of Parabolic Problems by the $hp$-version of the Discontinuous Galerkin Finite Element Method},
\newblock \emph{SIAM J.Numer. Anal.}, 38 (2000), pp. 837--875.


\bibitem{RS}
\newblock N. Rezaei and F. Saedpanah,
\newblock  \emph{Discontinuous Galerkin for the wave equation: a simplified a priori error analysis},
\newblock arXiv:2006.14082v2 [math.NA], July 2021

\bibitem{Sc}
\newblock C. Schwab,
\newblock \emph{$p-$ and $hp-$ Finite Element Methods. Theory and Applications in Solid and Fluid Mechanics},
\newblock New York: Oxford University Press, 1998.

\bibitem{Se} 
\newblock G. Seregin,
\newblock \emph{Parabolic PDEs}, 
\newblock Lecture Notes from University of Oxford, 2019.

\bibitem{SHS} 
\newblock  E. S\"{u}li, P. Houston, and C. Schwab,
\newblock \emph{hp-finite element methods for hyperbolic problems},
\newblock in Proceedings of the Conference on the Mathematics of Finite Elements and Application MAFELAP X, J. R. Whiteman, ed., Elsevier, New York, 2000, pp. 143--162.

\bibitem{SHS2} 
\newblock  E. S\"{u}li, P. Houston, and C. Schwab,
\newblock \emph{Discontinuous hp Finite Element Methods for Advection-Diffusion Problems},
\newblock Tech. Report NA 00-15, Oxford University Computing Laboratory, Oxford, UK, 2000.

\bibitem{Th}
\newblock V. Thomee,
\newblock \emph{Galerkin Finite Element Methods for Parabolic Problems},
\newblock Berlin, Heidelberg, New York: Springer, 2006.

\bibitem{Wh} 
\newblock  M.F. Wheeler,
\newblock \emph{An elliptic collocation-finite element method with interior penalties},
\newblock SIAM J. Numer. Anal.,15 (1978), pp. 152--161.

\bibitem{Wl} 
\newblock J. Wloka,
\newblock \emph{Partial Differential Equations}, 
\newblock Cambridge University Press, 1987.
\end{thebibliography}
\end{document}